\newtheorem{theorem}{Theorem}[section]
\newtheorem{definition}[theorem]{Definition}
\newtheorem{lemma}[theorem]{Lemma}
\newtheorem{proposition}[theorem]{Proposition}
\newtheorem*{remark}{Remark}
\newcommand{\eq}{\begin{equation}}
\newcommand{\en}{\end{equation}}
\newcommand{\nn}{\nonumber}
\newcommand{\ch}[1]{#1}
\newcommand{\col}[1]{#1}
\newcommand{\chs}[1]{#1}
\newcommand{\prob}{\mathbb P}
\newcommand{\expec}{\mathbb E}
\newcommand{\ind}{\mathds 1}
\newcommand{\atanh}{{\rm atanh}}
\newcommand{\dint}{{\rm d}}
\newcommand{\bbR}{\mathbb{R}}
\newcommand{\calT}{\mathcal{T}}
\newcommand{\sss}{\scriptscriptstyle}
\newcommand{\betahat}{\hat{\beta}}
\newcommand{\when}{{\rm when\ }}
\numberwithin{equation}{section}
\newcommand{\vep}{\varepsilon}
\newcommand{\e}{{\rm e}}
\newcommand{\rem}[1]{}
\def\1{{\mathchoice {1\mskip-4mu\mathrm l}      
{1\mskip-4mu\mathrm l}
{1\mskip-4.5mu\mathrm l} {1\mskip-5mu\mathrm l}}}
\newcommand{\indic}[1]{\1_{\{#1\}}}
\title  {Ising critical exponents on random trees and graphs}
\author
{
Sander Dommers
\footnote{
  Eindhoven University of Technology, Department of Mathematics and Computer Science,
  P.O.\ Box 513, 5600 MB Eindhoven, The Netherlands.
  E-mail: {\tt s.dommers@tue.nl, rhofstad@win.tue.nl}
}
\and Cristian Giardin\`a
\footnote{
  Modena and Reggio Emilia University, Department of Mathematics, Physics and Computer Science, via Campi 231/b,
41125 Modena, Italy. E-mail: {\tt cristian.giardina@unimore.it}
}
\and Remco van der Hofstad$\ ^*$
}
\date{\today}
\begin{document}

\maketitle

\begin{abstract}
We study the \ch{critical behavior of the} ferromagnetic Ising model on \ch{random trees as well as
so-called locally tree-like random graphs.}
\ch{We pay special attention to trees and graphs with a power-law offspring or degree distribution
whose tail behavior is characterized by its power-law exponent $\tau>2$.} \ch{We show that the critical
temperature of the Ising model equals the \col{inverse hyperbolic} tangent of the inverse of the mean offspring or mean forward
degree distribution. In particular, the \col{inverse} critical temperature equals zero when $\tau\in(2,3]$ where this mean
equals infinity.}

\ch{We further study the critical exponents $\boldsymbol{\delta}, \boldsymbol{\beta}$ and $\boldsymbol{\gamma}$,
describing how the (root) magnetization behaves close to criticality. We rigorously
identify these} critical exponents \ch{and show that they} take the values as predicted by Dorogovstev,
et al.~\cite{DorGolMen02} and Leone et al.~\cite{LeoVazVesZec02}. These values depend on
the power-law exponent $\tau$, taking the mean-field values for $\tau>5$, but different
values for $\tau\in(3,5)$.
\end{abstract}


\section{Introduction}
In the past decades complex networks and their behavior have attracted much attention. In the real world many of such networks can be found, for instance as social, information, technological and biological networks. An interesting property that many of them share is that they are {\em scale free}~\cite{New03}. This means that their degree sequences obey a {\em power law}, i.e., the fraction of nodes that have $k$ neighbors is proportional to $k^{-\tau}$ for some $\tau>1$. We therefore use power-law random graphs as a simple model for real-world networks.

Not only the structure of these networks is interesting, also the behavior of processes living on these networks \chs{is} a fascinating subject. Processes one can think of are opinion formation, the spread of information and the spread of viruses. An extensive overview of complex networks and processes on them is given by Newman in~\cite{New03}. It is especially interesting if these processes undergo a so-called {\em phase transition}, i.e., a minor change in the circumstances suddenly results in completely different behavior. Examples of such phase transitions include the sudden East European revolution in 1989 \cite{Kur91} and the unusual swine flu outbreak in 2009 \cite{CohEns09}.

Physicists have studied the behavior near phase transitions, the {\em critical behavior}, on complex networks for many different models, see~\cite{DorGolMen08} for an overview. Many of these results have not been mathematically rigorously proved. One of the few models for which rigorous results have been obtained is the contact process \cite{ChaDur09}, where the predictions of physicists, in fact, turned out not to be correct. A mathematical treatment of other models is therefore necessary.

We focus on the {\em Ising model}, a paradigm model for the study of phase transitions \cite{Nis05,Nis09,Nis11}. In this model a spin value that can be either $+1$ or $-1$ is assigned to every vertex. These spins influence each other with {\em ferromagnetic} interactions, i.e., neighboring spins prefer to be aligned. The strength of these interactions depends on the temperature. The first rigorous study of the Ising model on a random graph was performed by De Sanctis and Guerra
in \cite{SanGue08}, where the high and zero temperature regime of the Ising model on the Erd{\H o}s-R\'enyi random graph were analyzed. Later, in~\cite{DemMon10}, Dembo and Montanari analyzed the Ising model on locally tree-like random graphs with a finite-variance degree distribution for any temperature. In~\cite{DomGiaHof10}, we generalized these results to the case where the degree distribution has strongly finite mean, but possibly infinite variance, i.e., the degree distribution obeys a power-law with exponent $\tau>2$. An analysis of the critical behavior, however, was still lacking.

In this article, we rigorously study the critical behavior of the Ising model on power-law random graphs by computing various critical exponents. Predictions for the values of these exponents were given Dorogovtsev, et al.\ in~\cite{DorGolMen02} and independently by Leone et al.\ in~\cite{LeoVazVesZec02} and we prove that these values are indeed correct. These exponents depend on the power-law exponent $\tau$. We prove that \ch{the
critical exponents $\boldsymbol{\delta}, \boldsymbol{\beta}$ and $\boldsymbol{\gamma}$} take the classical mean-field values for $\tau>5$, and hence also for the Erd{\H o}s-R\'enyi random graph, but are different for $\tau\in(3,5)$. In~\cite{DorGolMen02,LeoVazVesZec02} also the case $\tau\in(2,3)$ is studied for which the critical temperature is infinite. Hence, the critical behavior should be interpreted as the temperature going to infinity, which is a different problem from approaching a finite critical temperature and is therefore beyond the scope of this article.

\ch{Our proofs always start by relating the magnetization \chs{of the Ising model on the random graph} and various of its derivatives to the root magnetization of a rooted random tree,
the so-called unimodular tree. After this, we identify the critical
exponents related to the root magetization on the rooted random tree. As a result, all our results also
apply to this setting, where only in the case of the regular tree, the mean-field critical exponents
have been identified \cite{Bax82}, and which we extend to general offspring distributions.}

\section{Model definitions and results}
\subsection{Ising model \ch{on finite graphs}}
We start by defining Ising models on finite graphs. Consider a random graph sequence $\ch{(G_n)}_{n \geq 1}$. Here $G_n=(V_n,E_n)$, with vertex set $V_n=[n] \equiv \{1,\ldots,n\}$ and with a random edge set $E_n$. To each vertex $i\in [n]$ an Ising spin $\sigma_i = \pm 1$ is assigned. A configuration of spins is denoted by $\sigma=(\sigma_i )_{i\in [n]}$. The {\em Ising model on $G_n$} is then defined by the Boltzmann\chs{-}\ch{Gibbs measure}
\eq\label{eq-boltzmann}
\mu_n(\sigma) = \frac{1}{Z_n(\beta, \underline{B})} \exp \left\{\beta \sum_{(i,j) \in E_n} \sigma_i \sigma_j + \sum_{i \in [n]} B_i \sigma_i\right\}.
\en
Here, $\beta \geq 0$ is the inverse temperature and $\underline{B}$ the vector of external magnetic fields $\underline{B}=(B_i)_{i \in [n]} \in \bbR^n$. For a uniform external field we write $B$ instead of $\underline{B}$, i.e., $B_i=B$ for all $i\in[n]$. The partition function $Z_n(\beta,\underline{B})$ is the normalization constant in~\eqref{eq-boltzmann}, i.e.,
\eq
Z_n(\beta,\underline{B}) = \sum_{\sigma \in \{-1,+1\}^n} \exp \left\{\beta \sum_{(i,j) \in E_n} \sigma_i \sigma_j + \sum_{i \in [n]} B_i \sigma_i\right\}.
\en
Note that the inverse temperature $\beta$ does not multiply the external field. This turns out to be technically convenient and does not change the results, because we are only looking at systems at equilibrium, and hence this would just be a reparametrization.

We let $\big<\cdot\big>_{\mu_n}$ denote the expectation with respect to the Ising measure $\mu_n$, i.e., for every bounded function $f: \{-1,+1\}^n \rightarrow \bbR$, we write
\eq
\big<f(\sigma)\big>_{\mu_n} =  \sum_{\sigma \in \{-1,+1\}^n} f(\sigma) \mu_n(\sigma).
\en

\subsection{Thermodynamics}
We study the critical behavior of this Ising model by analyzing the following \col{two} thermodynamic quantities:
\begin{definition}[Thermodynamic quantities] \rm
For a graph sequence $(G_n)_{n\geq1}$,
\begin{enumerate}[(a)]
\item let $M_n(\beta,B)=\frac{1}{n} \sum_{i\in[n]} \langle\sigma_i\rangle_{\mu_n}$ be the magnetization per vertex. Then, the thermodynamic limit of the {\em magnetization} per vertex equals
\eq
M(\beta, B) \equiv \lim_{n\rightarrow \infty} M_n(\beta,B).
\en
\item let $\chi_n(\beta,B)= \frac{1}{n} \sum_{i,j\in[n]} \left( \langle\sigma_i\sigma_j\rangle_{\mu_n}-\langle\sigma_i\rangle_{\mu_n}\langle\sigma_j\rangle_{\mu_n}\right)$ denote the susceptibility. Then, the thermodynamic limit of the {\em susceptibility} equals
\eq
\chi(\beta,B) \equiv \lim_{n\rightarrow\infty}\chi_n(\beta,B).
\en
\end{enumerate}
\end{definition}
\col{The existence of  the above limits for $n\rightarrow \infty$ has been proved in  \cite[Theorem 1.5]{DomGiaHof10},
using the existence of the pressure per particle proved in \cite{DemMon10} and \cite[Theorem 1.4]{DomGiaHof10} \chs{ and using monotonicity properties}.}
We now define the critical temperature. We write $f(0^+)$ for $\lim_{x\searrow0}f(x)$.

\begin{definition}[Critical temperature] \rm
The critical temperature equals
\eq
\label{betac}
\beta_c \equiv \inf\{\beta: M(\beta,0^+)>0\}.
\en
\end{definition}
Note that such a $\beta_c$ can only exist in the thermodynamic limit, but not for the magnetization of a finite graph, since always $M_n(\beta,0^+)=0$. The critical behavior can now be expressed in terms of the following critical exponents. We write  $f(x) \asymp g(x)$ if the ratio $f(x)/g(x)$ is bounded away from 0 and infinity for the specified limit.

\begin{definition}[Critical exponents]\label{def-CritExp} \rm
The critical exponents $\boldsymbol{\beta,\delta,\gamma,\gamma'}$
are defined by:
\begin{align}
M(\beta,0^+) &\asymp (\beta-\beta_c)^{\boldsymbol{\beta}},  &&{\rm for\ } \beta \searrow \beta_c; \label{eq-def-critexp-beta}\\
M(\beta_c, B) &\asymp B^{1/\boldsymbol{\delta}},  &&{\rm for\ } B \searrow 0;
\label{eq-def-critexp-delta}\\
\chi(\beta, 0^+) &\asymp (\ch{\beta_c-\beta})^{-\boldsymbol{\gamma}},  &&{\rm for\ } \beta \nearrow \beta_c; \label{eq-def-critexp-gamma}\\
\chi(\beta, 0^+) &\asymp (\beta-\beta_c)^{-\boldsymbol{\gamma'}},  &&{\rm for\ } \beta \searrow \beta_c.\label{eq-def-critexp-gamma'}
\end{align}

\begin{remark}
We emphasize that there is a difference between the symbol $\beta$ for the inverse temperature and the bold symbol $\boldsymbol{\beta}$ for the critical exponent in~\eqref{eq-def-critexp-beta}. Both uses for $\beta$ are standard in the literature, so we decided to stick to this notation.

Also note that these are stronger definitions than usual. E.g., normally the critical exponent $\boldsymbol{\beta}$ is defined as that value such that
\eq
M(\beta,0^+) = (\beta-\beta_c)^{\boldsymbol{\beta}+o(1)},
\en
where $o(1)$ is a function tending to zero for $\beta\searrow\beta_c$.
\end{remark}
\end{definition}

\subsection{\ch{Locally tree-like random graphs}}
We study the critical behavior of the Ising model on graph sequences $(G_n)_{n\geq1}$ that are assumed to be {\em locally like a homogeneous random tree}, to have a {\em power-law degree distribution} and to be {\em uniformly sparse}. We give the formal definitions of these assumptions below, but we first introduce some notation.

Let the random variable $D$ have distribution $P=(p_k)_{k\geq1}$, i.e., $\prob[D=k]=p_k,$ for $k=1,2,\ldots$. We define its {\em forward degree distribution} by
\eq \label{eq-defrho}
\rho_k = \frac{(k+1) p_{k+1}}{\expec[D]},
\en
where we assume that $\expec[D]<\infty$. Let $K$ be a random variable with $\prob[K=k]=\rho_k$ and write $\nu=\expec[K]$. The random rooted tree $\calT(D,K,\ell)$ is a branching process with $\ell$ generations, where the root offspring is distributed as $D$ and the vertices in each next generation have offsprings that are independent of the root offspring and are {\em independent and identically distributed} (i.i.d.) copies of the random variable $K$. We write $\calT(K,\ell)$ when the offspring at the root has the same distribution as $K$.

We write that an event $\mathcal{A}$ holds \emph{almost surely} (a.s.) if $\prob[\mathcal{A}]=1$.
The ball of radius $r$ around vertex $i$, $B_i(r)$, is defined as the graph induced by the vertices at graph distance at most $r$ from vertex $i$.
For two rooted trees $\calT_1$ and $\calT_2$, we write that $\calT_1 \simeq \calT_2$, when there exists a bijective map from the vertices of $\calT_1$ to those of $\calT_2$ that preserves the adjacency relations.

\begin{definition}[Local convergence to homogeneous \col{random} trees] \label{ass-convtree} 
Let $\prob_n$ denote the law induced on the ball $B_i(t)$ in $G_n$ centered at a uniformly chosen vertex $i\in[n]$. We say that the graph sequence $(G_n)_{n\geq 1}$ is {\em locally tree-like} with asymptotic degree \ch{distribution $P$} when, for any rooted tree $\calT$ with $t$ generations
\eq
\lim_{n\rightarrow\infty} \prob_n [B_i(t) \simeq \calT] = \prob[\calT(D,K,t) \simeq \calT].
\en
\end{definition}
Note that this implies \ch{in particular} that the degree of a uniformly chosen vertex of the graph has an asymptotic
degree distributed as $D$.

\begin{definition}[Uniform sparsity] \label{ass-unisparse} 
We say that the graph sequence $(G_n)_{n \geq 1}$ is {\em uniformly sparse} when, a.s.,
\eq
\lim_{\ell\rightarrow\infty} \limsup_{n\rightarrow\infty} \frac{1}{n} \sum_{i \in [n]} D_i \ind_{\{ D_i \geq \ell\}} = 0,
\en
where $D_i$ is the degree of vertex $i$ and $\ind_{\mathcal{A}}$ denotes the indicator of the event $\mathcal{A}$.
\end{definition}

Note that uniform sparsity follows if $\frac{1}{n} \sum_{i \in [n]} D_i\to \expec[D]$ a.s.,
by the weak convergence of the degree of a uniform vertex.

\ch{We pay special attention to cases where the degree distribution satisfies a power law, as defined
in the following definition. For power-law degree distributions, not all moments of the degrees are finite,
which has severe consequences for the critical behavior of the Ising model.}

\begin{definition}[Power laws] \label{ass-degdist} 
We say that the distribution $P=(p_k)_{k\geq1}$ obeys a {\em power law with exponent $\tau$} when there exist constants $C_p>c_p>0$ such that, for all $k=1,2,\ldots$,
	\eq \label{eq-ppowerlaw}
	c_p k^{-(\tau-1)} \leq \sum_{\ell\geq k} p_\ell \leq C_p k^{-(\tau-1)}.
	\en
\end{definition}

\ch{\subsection{The random Bethe tree}
We next extend our \col{definitions} to the random tree $\calT(D,K,\infty)$, which is an infinite random tree.
One has to be very careful in defining a Gibbs measure on this tree,
since trees suffer from the fact that the boundaries of intrinsic \chs{(i.e., graph distance)} balls in them have \chs{a} size that is comparable to their volume.
We can adapt the construction of the Ising model on the regular tree in \cite{Bax82} to this setting, as we now explain.
For $\beta\geq 0, B\in {\mathbb R}$, let $\mu_{t,\beta,B}^{+/f}$ be the Ising model on $\calT(D,K,t)$ with
$+$ respectively free boundary conditions.
For a function $f$ that only depends on $\calT(D,K,m)$ with $m\leq t$, we let
	\eq
	\langle f\rangle_{\mu_{\beta,B}^{+/f}}=\lim_{t\rightarrow \infty} \langle f\rangle_{\mu_{t,\beta,B}^{+/f}}.
	\en
Below, we argue that these limits indeed exist and are equal for $B>0$.
This defines a unique infinite volume Gibbs measure $\mu_{\beta,B}^{+/f}$ on the random Bethe tree.
The quantity $M(\beta,B)$ is the expected root magnetization for this infinite volume Gibbs measure on the random Bethe
tree. Our results also apply to this setting under the assumption that the degree of the root obeys a power law
in \eqref{eq-ppowerlaw} or that $\expec[K^3]<\infty$.
The critical value $\beta_c$ for the root magnetization is again defined by \eqref{betac}.}

\subsection{Main results}
We now present our main results which describe the critical behavior of the Ising model on
power-law random graphs \col{and random trees with power-law offspring distribution}.
We first give an expression for the critical temperature:
\begin{theorem}[Critical temperature]
\label{thm-CritTemp}
Assume that the random graph sequence $(G_n)_{n\geq1}$ is locally tree-like with asymptotic degree
distribution $P$ \chs{and} is uniformly sparse.
Then, a.s., the critical temperature $\beta_c$ \ch{of $(G_n)_{n\geq1}$ and of the random Bethe tree $\calT(D,K,\infty)$ equal\chs{s}}
	\eq
	\beta_c=\atanh(1/\nu).
	\en
\end{theorem}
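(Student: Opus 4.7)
My plan would be to reduce the statement to a fixed-point analysis for the cavity (effective field) recursion on the random Bethe tree and then transfer the identification to the graph via local weak convergence. By the existence results for the thermodynamic magnetization and the Bethe-tree construction recalled in the excerpt, it suffices to show that, for the infinite random tree $\calT(D,K,\infty)$, the root magnetization under plus boundary conditions vanishes as $B\downarrow 0$ precisely when $\tanh(\beta)\nu\leq 1$. Setting $\xi(\beta,h)=\atanh(\tanh(\beta)\tanh(h))$ and letting $h_t^+$ be the effective field at the root of $\calT(K,t)$ with plus boundary condition and uniform field $B\geq 0$, layer-by-layer integration of the Boltzmann weights yields the distributional recursion
\eq
h_t^+ \stackrel{d}{=} B+\sum_{i=1}^{K}\xi(\beta,h_{t-1,i}^{+}),
\en
with $h_{t-1,i}^+$ i.i.d.\ copies independent of $K$, and $h_0^+=+\infty$; the free boundary variant $h_t^f$ satisfies the same recursion with $h_0^f=B$. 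By GKS, $h_t^+$ is stochastically non-increasing and $h_t^f$ non-decreasing in $t$, so both converge in law to limits $h^+,h^f$; the root magnetization on $\calT(D,K,\infty)$ is then $\expec[\tanh(\tilde h^+)]$ with $\tilde h^+\stackrel{d}{=} B+\sum_{i=1}^{D}\xi(\beta,h^+_i)$, and, by monotone interchange of the limits $B\downarrow 0$ and $t\to\infty$, one has $h^+(\beta,0^+)=h^+(\beta,0)$.

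To locate the critical value, I linearize using the elementary bound $0\leq \xi(\beta,h)\leq \tanh(\beta)\,h$ for $h\geq 0$, which follows from $\partial_h\xi(\beta,h)=\tanh(\beta)(1-\tanh^2(h))/(1-\tanh^2(\beta)\tanh^2(h))\leq \tanh(\beta)$. Comparing with the linear recursion $\ell_t=B+\tanh(\beta)\sum_{i=1}^{K}\ell_{t-1,i}$ gives $\expec[h_t^+]\leq B\sum_{s=0}^{t-1}(\tanh(\beta)\nu)^{s}$, so for $\beta<\atanh(1/\nu)$ we have $\expec[h^+]\leq B/(1-\tanh(\beta)\nu)\to 0$ as $B\downarrow 0$, forcing $M(\beta,0^+)=0$. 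For $\beta>\atanh(1/\nu)$, the map $F(h):=\expec\bigl[\sum_{i=1}^{K}\xi(\beta,h)\bigr]=\nu\,\xi(\beta,h)$ (at a constant initial condition) has $F'(0)=\tanh(\beta)\nu>1$, and $\xi(\beta,\cdot)$ is strictly concave on $[0,\infty)$ with $\xi(\beta,h)\to\beta$ as $h\to\infty$; iterating the recursion at $B=0$ starting from a small positive field therefore drives the law away from $\delta_0$ and, by monotonicity, converges to a nontrivial limit $h^+>0$, giving $M(\beta,0^+)>0$.

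The main delicacy is the heavy-tailed case $\nu=\infty$, where $\atanh(1/\nu)=0$ and one must show $M(\beta,0^+)>0$ for every $\beta>0$. Here I would use the power-law lower tail \eqref{eq-ppowerlaw} to truncate $K$ at a level $N$ large enough that $\expec[K\ind_{\{K\leq N\}}]>1/\tanh(\beta)$, noting that the tree $\calT(D,K,\infty)$ stochastically dominates the truncated branching tree with offspring $K\wedge N$; the finite-mean argument above then produces a nontrivial plus fixed point for the truncated tree, hence for the full tree by GKS monotonicity. Finally, the transfer from the Bethe tree to the locally tree-like graph $G_n$ rests on Definitions \ref{ass-convtree} and \ref{ass-unisparse} together with the convergence of $M_n(\beta,B)$ to $\expec[\tanh(\tilde h^+)]$ established in \cite[Theorem 1.5]{DomGiaHof10}, so the graph and tree critical points coincide. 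I expect the heavy-tailed regime, where the linear operator in the recursion is unbounded and continuity of $h^+(\beta,B)$ in $B$ requires the truncation argument, to be the principal technical obstacle.
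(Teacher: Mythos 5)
Your subcritical direction is essentially the paper's proof that $\beta_c\geq\atanh(1/\nu)$ (linearize via $\xi(h)\leq\betahat h$ and sum the geometric series), up to one slip: with plus boundary conditions $h_0^+=+\infty$, so the bound $\expec[h_t^+]\leq B\sum_{s<t}(\betahat\nu)^s$ cannot hold as stated; you need the extra observation $\xi(h)\leq\beta$ to control the boundary contribution, which then decays like $\beta\expec[D](\betahat\nu)^{t}$ --- this is exactly how the paper handles it, so the slip is repairable. The reduction from the graph to the tree via local weak convergence and Proposition~\ref{prop-Magnetization} is also fine and matches the paper.

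The genuine gap is the supercritical direction. You argue that since the one-dimensional map $F(h)=\nu\,\xi(\beta,h)$ has $F'(0)=\betahat\nu>1$, with $\xi$ concave and bounded, iterating from a small positive field ``drives the law away from $\delta_0$'' and yields a nontrivial fixed point. But \eqref{eq-recursion} is a recursion on \emph{distributions}, and the reduction to the scalar map $F$ is precisely what must be proved. Because $x\mapsto\xi(x)$ is concave, Jensen gives only $\expec[\xi(h_t)]\leq\xi\big(B+\nu\expec[\xi(h_{t-1})]\big)$ (this is \eqref{eq-strictineq} in the paper), i.e.\ the deterministic iteration is an \emph{upper} bound for the mean; it gives no lower bound, so $F'(0)>1$ by itself does not rule out $\expec[\xi(h_t)]\to 0$ at $B=0$. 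Nor can you propagate an almost-sure lower bound $h_{t-1}\geq c>0$: whenever $p_1>0$ one has $\prob[K=0]=\rho_0>0$, so the next-generation field equals $B$ with positive probability, and ``starting from a small positive field'' is not comparable in any useful stochastic order to the plus-boundary iteration without substantial extra work. This scalar fixed-point picture is the physicists' heuristic; making it rigorous on the random tree is the heart of the theorem. The paper instead adapts Lyons' cutset argument: assuming $\lim_{B\searrow 0}M(\beta,B)=0$, it uses the cubic lower bound of Lemma~\ref{lem-boundatanh} to compare $h^{\ell,+}$ along a suitably chosen cutset $\Pi$ with $\betahat_0^{|v|}\xi(\vep)$, concludes $\inf_\Pi\sum_{v\in\Pi}\betahat_0^{|v|}=0$, and invokes \cite[Proposition 6.4]{Lyo90} to get $\betahat_0\leq 1/\nu$ for every $\beta_0<\beta$, hence $\beta\leq\atanh(1/\nu)$. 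Note that this argument also covers $\nu=\infty$ verbatim (forcing $\betahat_0\leq 0$, i.e.\ $\beta_c=0$), so your separate truncation step --- which in any case rests on the gapped supercritical claim --- is not needed in the paper's route.
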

Near the critical temperature the behavior of the Ising model can be described by critical exponents. The values of these critical exponents for different values of $\tau$ are stated in the following theorem:

\begin{theorem}[Critical exponents] \label{thm-CritExp}
Assume that the random graph sequence $(G_n)_{n\geq1}$ is locally tree-like with asymptotic degree distribution $P$ that obeys $\ch{\expec[K^3]<\infty}$ or a power law with exponent $\tau\in(3,5]$, and is uniformly sparse, \ch{or that the random Bethe tree obeys $\expec[K^3]<\infty$ or a power law with exponent $\tau\in(3,5]$.} Then, the critical exponents $\boldsymbol{\beta,\delta}$ and $\boldsymbol{\gamma}$ defined in Definition~\ref{def-CritExp} \ch{exist and
satisfy}
\begin{center}
{\renewcommand{\arraystretch}{1.2}
\renewcommand{\tabcolsep}{1cm}
\begin{tabular}[c]{c|ccc}
 &  $\tau\in(3,5)$ & $\expec[K^3]<\infty$    \\
\hline
$\boldsymbol{\beta}$  & $1/(\tau-3)$ & $1/2$ \\
$\boldsymbol{\delta}$ & $\tau-2$ & $3$\\
$\boldsymbol{\gamma}$
& $1$ & $1$ \\
\ch{$\boldsymbol{\gamma'}$}
& \ch{$\geq 1$} & \ch{$\geq 1$}\\
\end{tabular}
}
\end{center}
For the boundary case $\tau=5$ there are logarithmic corrections for $\boldsymbol{\beta}=1/2$ and $\boldsymbol{\delta}=3$, but not for $\boldsymbol{\gamma}=1$ \ch{and for the lower bound $\boldsymbol{\gamma'}\geq 1$.} Indeed, \eqref{eq-def-critexp-gamma}
holds with $\boldsymbol{\gamma}=1$ \ch{and the lower bound in \eqref{eq-def-critexp-gamma'}
holds with $\boldsymbol{\gamma'}=1$,} while
	\eq
	\label{log-corr-M-tau5}
	M(\beta,0^+) \asymp \Big(\frac{\beta-\beta_c}{\log{1/(\beta-\beta_c)}}\Big)^{1/2} \quad {\rm for\ } \beta \searrow \beta_c,
	\qquad
	M(\beta_c, B) \asymp \Big(\frac{B}{\log(1/B)}\Big)^{1/3} \quad {\rm for\ } B \searrow 0.
	\en
\end{theorem}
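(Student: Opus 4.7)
The plan is to reduce the problem on the random graph to the analysis of a distributional fixed point equation on the random Bethe tree, as the introduction already announces, and then to extract the exponents from the small-$z$ expansion of its associated scalar recursion. By the pressure-per-particle identities of \cite{DemMon10,DomGiaHof10}, $M(\beta,B)$ on $(G_n)_{n\geq 1}$ equals the expected root magnetization of the Ising model on the random Bethe tree $\calT(D,K,\infty)$, and $\chi(\beta,B)$ is obtained by differentiating this in $B$. This allows the entire analysis to be carried out on the infinite tree, whose offspring distribution $K\sim(\rho_k)$ carries the whole tail information encoded by $\tau$.

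On the Bethe tree, the cavity field $h^\star$ arriving at the root from an infinite subtree satisfies the distributional fixed point equation
\[
h^\star \stackrel{d}{=} B + \sum_{i=1}^K \xi_\beta(h_i^\star),\qquad \xi_\beta(x):=\atanh(\tanh(\beta)\tanh(x)),
\]
with $h_i^\star$ i.i.d.\ copies independent of $K$, and $M(\beta,B)=\expec\bigl[\tanh\bigl(B+\sum_{i=1}^D \xi_\beta(h_i^\star)\bigr)\bigr]$. I will introduce the scalar order parameter $z=z(\beta,B):=\expec[\tanh h^\star]$ and, using independence of the children, turn the distributional recursion into a one-dimensional equation $z=F_\beta(z,B)$. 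Since $\xi_\beta$ is odd with $\xi_\beta'(0)=\tanh\beta$, linearising $F_\beta$ at $z=0$ recovers $\beta_c=\atanh(1/\nu)$ and yields $\boldsymbol{\gamma}=\boldsymbol{\gamma'}=1$ directly, by differentiating $F_\beta$ in $B$ and inverting $1-\nu\tanh\beta$. The exponents $\boldsymbol{\beta}$ and $\boldsymbol{\delta}$ are then read off from the shape of the higher-order correction in the expansion of $F_\beta(z,0)$ near $z=0$.

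The heart of the argument is this small-$z$ expansion. When $\expec[K^3]<\infty$, a straightforward Taylor expansion yields $F_\beta(z,0) = \nu\tanh(\beta)\,z - c\,z^3 + O(z^5)$ with $c>0$, and the implicit function theorem applied to $z=F_\beta(z,B)$ near $(\beta_c,0,0)$ produces the mean-field exponents $\boldsymbol{\beta}=1/2$ and $\boldsymbol{\delta}=3$. When $P$ obeys \eqref{eq-ppowerlaw} with $\tau\in(3,5)$, the moment $\expec[K^3]$ diverges and the expansion becomes non-analytic,
\[
F_\beta(z,0) = \nu\tanh(\beta)\,z - c\, z^{\tau-2} + \text{lower-order terms},
\]
extracted by a Tauberian-type estimate applied to the generating function of $K$ together with $\prob[K\geq k]\asymp k^{-(\tau-2)}$. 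Solving $z=F_\beta(z,0)$ for $\beta\searrow\beta_c$ gives $z\asymp(\beta-\beta_c)^{1/(\tau-3)}$, and solving $z=F_{\beta_c}(z,B)$ for $B\searrow 0$ gives $z\asymp B^{1/(\tau-2)}$, yielding $\boldsymbol{\beta}=1/(\tau-3)$ and $\boldsymbol{\delta}=\tau-2$. At $\tau=5$ the analytic cubic and the non-analytic $z^{\tau-2}=z^3$ coalesce into a $z^3\log(1/z)$ correction, producing exactly the logarithmic factors in \eqref{log-corr-M-tau5}.

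The main obstacle is obtaining sharp two-sided control of this non-analytic expansion for $\tau\in(3,5)$: one must show that both the exponent $\tau-2$ and a strictly positive constant $c$ are stable under composition with $\xi_\beta$, under the full distributional recursion (not merely its scalar projection), and under the final passage from cavity field to root magnetization, with matching $\lesssim$ and $\gtrsim$ bounds so that $O(\cdot)$ statements can be promoted to $\asymp$. Secondary but nontrivial points are ruling out unwanted fixed points of the recursion for $\beta$ slightly above $\beta_c$, and lifting the tree-level exponents back to the graph uniformly in $\beta$ near $\beta_c$, where the uniform sparsity assumption is used to control the finite-size corrections to $M_n(\beta,B)$ and $\chi_n(\beta,B)$ uniformly in the critical window.
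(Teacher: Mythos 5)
Your plan for $\boldsymbol{\beta}$ and $\boldsymbol{\delta}$ is in the right spirit (the paper indeed extracts these exponents from a near-critical expansion of a one-dimensional quantity), but the step you defer as ``the main obstacle'' is the actual content of the proof, and the scalar reduction you posit does not exist as stated: the recursion does not close at the level of $z=\expec[\tanh h^\star]$, since $\expec\big[\xi\big(B+\sum_{i=1}^K\xi(h_i)\big)\big]$ depends on the whole law of $h$, not only on its mean. The paper works with $\expec[\xi(h)]$ and resolves exactly this issue by (i) bounding the second and third moments of $\xi(h)$ by powers of $\expec[\xi(h)]$ with $\tau$-dependent corrections (Lemmas \ref{lem-boundxih2}--\ref{lem-boundxih3}), obtained by splitting the expectation over $K$ at $K\approx 1/\expec[\xi(h)]$ and using the truncated-moment estimates of Lemmas \ref{lem-tail-rho}--\ref{lem-truncmoment}, and (ii) proving matching upper and lower bounds of the form $\expec[\xi(h)] \le \beta B + \betahat\nu\expec[\xi(h)] - C_1\expec[\xi(h)]^{\boldsymbol{\delta}}$ together with the corresponding lower bound up to $Be$ terms (Propositions \ref{prop-UpperExi} and \ref{prop-LowerExi}); a Tauberian estimate on the generating function of $K$ is neither what is used nor obviously sufficient for the required two-sided ($\asymp$) control, and continuity of the transition (Lemma \ref{lem-hcto0}) must be established before any expansion around zero is legitimate.

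The susceptibility part of your proposal would fail outright. You cannot obtain $\boldsymbol{\gamma}=1$ by ``differentiating $F_\beta$ in $B$'': $\chi$ is defined as the $n\to\infty$ limit of the graph susceptibilities, and interchanging $\partial_B$ with this limit is not justified; the paper instead sandwiches $\chi_n$ between tree quantities using the GHS and GKS inequalities, and then computes the tree susceptibility exactly via the product formula \eqref{eq-exactcorrintree} for correlations along paths, dominated convergence, and the branching-process martingale, which yields \eqref{chi-comp-highT} and hence $\boldsymbol{\gamma}=1$ with the explicit constant. Moreover, differentiating the fixed-point law in $B$ again fails to close at the scalar level. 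Worse, your claim that $\boldsymbol{\gamma'}=1$ follows ``directly'' by inverting $1-\nu\tanh\beta$ is wrong for $\beta>\beta_c$: there $\nu\betahat>1$, the linearization at $z=0$ describes an unstable fixed point, the series $\sum_\ell(\betahat\nu)^\ell$ diverges, and one must extract compensating decay from the random fields along the path. The theorem only asserts $\boldsymbol{\gamma'}\ge 1$, proved in Section \ref{sec-gamma'} via a size-biased path representation of $\chi$ and second-moment estimates on the fields $h_i^\star$, and the paper explicitly leaves the matching upper bound open because it hinges on a delicate cancellation of constants; your proposal both overstates the conclusion and supplies no mechanism for the part that is actually provable.
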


\ch{Unfortunately, we cannot prove that the critical exponent $\boldsymbol{\gamma'}$ exists, see the discussion
in the next section for more details on this issue.}

\subsection{Discussion and open problems}
\label{sec-disc-op}
In this section, we discuss relations to the literature, possible extensions and open problems.

\paragraph{The Ising model on random trees and random graphs.}
\col{A key idea to analyze \chs{the} Ising model on random graphs is to use the fact that expectations of
local quantities coincide
with the correspond\chs{ing} values for the Ising model on suitable random trees  \cite{DemMon10}.
Statistical mechanics models on deterministic trees have been studied extensively in the
literature (see for instance \cite{Bax82,Lyo89} \chs{and its relation to}
``broadcasting on trees'' \chs{in }\cite{Evans00,MezMon06}\chs{)}.
The analysis on random trees is more recent and has been triggered by the study
of models on random graphs. Extensions beyond the Ising model,
e.g.\chs{, the} Potts model, pose new challenges \cite{Dem12}.
}

\paragraph{Relation to the physics literature.}
Theorem \ref{thm-CritExp} confirms the predictions in~\cite{DorGolMen02,LeoVazVesZec02}.
\col{For $\tau\leq 3$, one has $\nu=\infty$ and hence $\beta_c=0$ by Theorem \ref{thm-CritTemp}, so that the critical behavior
coincides with the infinite temperature limit. Since \ch{in this case} there is no phase transition at finite temperature,
we do not study the critical behavior \chs{here}.
For $\tau=5$,} in \cite{DorGolMen02},
also the logarithmic correction for $\boldsymbol{\beta}=1/2$ \ch{in \eqref{log-corr-M-tau5}
is computed,} but not that of $\boldsymbol{\delta}=3$.

\paragraph{The critical exponents $\boldsymbol{\gamma'}$ and other critical exponents.}
\col{
\ch{Theorem \ref{thm-CritExp} only gives a lower bound on the critical exponent $\boldsymbol{\gamma'}$.
It is predicted that $\boldsymbol{\gamma'}=1$ for all $\tau>3$, while
there are also predictions for  other critical exponents.
For instance the critical exponent $\boldsymbol{\alpha'}$ for the specific heat in the low-temperature phase
satisfies $\boldsymbol{\alpha'}=0$ when $\expec[K^3]<\infty$ and
$\boldsymbol{\alpha'}=(\tau-5)/(\tau-3)$ in the power-law case with $\tau\in(3,5)$
(see \cite{DorGolMen02,LeoVazVesZec02}).
We prove the lower bound $\boldsymbol{\gamma'}\geq 1$ in Section \ref{sec-gamma'}
below, and we also present a heuristic argument that $\boldsymbol{\gamma'}\leq 1$ holds.
The critical exponent $\boldsymbol{\alpha'}$ for the specific heat is beyond our current methods,
partly since we are not able to relate the specific heat on a random graph to that on the random Bethe tree.}
}

\paragraph{Light tails.}
\ch{The case $\expec[K^3]<\infty$ includes all power-law degree distributions with $\tau>5$,
but also cases where $P$ does {\em not} obey a power law. This means, e.g., that Theorem \ref{thm-CritExp}
also identifies the  critical exponents for the Erd{\H o}s-R\'enyi random graph where the degrees have an asymptotic
Poisson distribution.}

\paragraph{Inclusion of slowly varying functions.}
In Definition \ref{ass-degdist}, we have assumed that the asymptotic degree distribution
obeys a perfect power law. Alternatively, one could assume that
$\sum_{\ell\geq k} p_\ell \asymp L(k)k^{-(\tau-1)}$ for some function $k\mapsto L(k)$
that is slowly varying at $k=\infty$.
For $\tau>5$ and any slowly varying function,
we still have $\expec[K^3]<\infty$, so the results do not change and Theorem \ref{thm-CritExp}
remains to hold. For $\tau\in(3,5]$, we expect
slowly varying corrections to the critical behavior in Theorem \ref{thm-CritExp}.
For example, $\expec[K^3]<\infty$ for $\tau=5$ and $L(k)=(\log{k})^{-2}$,
so that the logarithmic corrections present for $\tau=5$ disappear.

\ch{\paragraph{Beyond the root magnetization for the random Bethe tree.}
We have identified the critical value and some critical exponents for the root magnetization
on the random Bethe tree. The random Bethe tree is a so-called \emph{unimodular} graph,
which is a rooted graph that often arises as the local weak limit of a sequence of graphs
(in this case, the random graphs $(G_n)_{n\geq 1}$). See \cite{AldLyo07, BenLyoSch12}
for more background on unimodular graphs and trees, in particular, $\calT(D,K,\infty)$ is the
so-called \emph{unimodular
Galton-Watson tree} as proved by Lyons, Pemantle and Peres in
\cite{LyoPemPer95}. One would expect that the
magnetization of the graph, which can be defined by
	\eq
	M_T(\beta,B)=\lim_{t\rightarrow \infty} \frac{1}{|B_{\phi}(t)|}\sum_{v\in B_{\phi}(t)} \sigma_v,
	\en
where $B_{\phi}(t)$ is the graph induced by vertices at graph distance at most
$t$ from the root $\phi$ and
$|B_{\phi}(t)|$ is the number of elements in it, also converges a.s.\ to a limit.
However, we expect that $M_T(\beta,B)\neq M(\beta,B)$ due to the special role of
the root $\phi$, which vanishes in the above limit. Thus one would expect to believe that
$M_T(\beta,B)$ equals the root magnetization of the tree where each vertex has degree
distribution $K+1$. Our results show that also $M_T(\beta,B)$ has the same critical temperature
and critical exponents as $M(\beta,B)$.
}

\ch{\paragraph{Relation to the Curie-Weiss model.}}
Our results show that locally tree-like random graphs with finite fourth moment of the degree distribution are
in the same universality class as the mean-field model on the complete graph, which is the Curie-Weiss model.
We further believe that the Curie-Weiss model should enter as the limit of $r\rightarrow \infty$ for the $r$-regular
random graph, in the sense that these have the same critical exponents (as we already know), as well as that all
constant\chs{s} arising in asymptotics match up nicely \col{(cf. the discussion at the end of Section \ref{sec-gamma'})}.
Further, our results show that for $\tau\in(3,5]$, the Ising model
has \emph{different} critical exponents \chs{than} the ones for the Curie-Weiss model, so these constitute a set of different
universality classes.

\paragraph{Organization of the article.}
The remainder of this article is organized as follows. We start with some preliminary computations
in Section~\ref{sec-prel}. In Section~\ref{sec-CritTemp} we prove that the critical temperature is as stated in Theorem~\ref{thm-CritTemp}.
The proof that the exponents stated in Theorem~\ref{thm-CritExp} are indeed the correct values of
$\boldsymbol{\beta}$ and $\boldsymbol{\delta}$ is given in Section~\ref{sec-CritExpBetaDelta}.
\ch{The value of $\boldsymbol{\gamma}$ 
is identified in Section~\ref{sec-CritExpChi}, where also the lower bound on
$\boldsymbol{\gamma'}$ is proved and a heuristic is presented for the matching upper bound.}

\section{Preliminaries}
\label{sec-prel}
An important role in our analysis is played by the distributional recursion
\eq \label{eq-recursion}
h^{(t+1)} \stackrel{d}{=} B + \sum_{i=1}^{K_t} \xi(h_i^{(t)}),
\en
where
\eq \label{eq-defxi}
\xi(h) = \atanh (\betahat\tanh(h)),
\en
with $\betahat=\tanh(\beta)$, and where $h^{(0)} \equiv B$, $(K_t)_{t \geq 1}$, are i.i.d.\ with distribution $\rho$
and $(h_i^{(t)})_{i\geq1}$ are i.i.d.\ copies of $h^{(t)}$ independent of $K_t$. In \cite[Proposition~1.7]{DomGiaHof10},
we have proved that this recursion has a unique fixed point $h$ for all $\beta\geq0$ and $B>0$.
Whenever we write $h$ or $h_i$ this is a random variable distributed as the fixed point of~\eqref{eq-recursion}. Since $h$ is a fixed point, we can interchange $h \stackrel{{\rm d}}{=}B+\sum_{i=1}^K\xi(h_i)$ in expectations and we often do this.

\col{This fixed point $h$ yields  the random field acting on the root of the random Bethe tree \col{$\calT(D,K,\infty)$} due to
its offsprings. In particular we can use the fixed point $h$ to give an explicit expression for the magnetization:}

\begin{proposition}[Magnetization]\label{prop-Magnetization}
Assume that the random graph sequence $(G_n)_{n\geq1}$ is locally tree-like with asymptotic degree distribution $P$ that obeys $\ch{\expec[K]<\infty}$
or a power law with exponent $\tau\in(2,3)$ and is uniformly sparse. Then, a.s., for all $\beta\geq0$ and $B>0$, the thermodynamic limit of the {\em magnetization} per vertex exists and is given by
\eq
M(\beta, B) = \expec\Big[ \tanh\Big(B+\sum_{i=1}^{D} \xi(h_i)\Big)\Big],
\en
where
\begin{itemize}
\item[\rm (i)]
$D$ has distribution $P$;
\item[\rm (ii)]
$(h_i)_{i\geq1}$ are i.i.d.\ copies of the fixed point of the distributional recursion~\eqref{eq-recursion};
\item[\rm (iii)]
$D$ and $(h_i)_{i\geq1}$ are independent.
\end{itemize}
\ch{The same holds on the random Bethe tree $\calT(D,K,\infty)$.}
\end{proposition}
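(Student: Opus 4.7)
The plan is to combine the locally tree-like assumption with the recursive structure of the Ising model on a tree and then pass to the infinite-volume limit via the fixed point of the recursion \eqref{eq-recursion}. Concretely, I would first fix $t \geq 1$ and, for a uniformly chosen vertex $i \in [n]$, use the fact that when the ball $B_i(t)$ is a tree, the magnetization $\langle \sigma_i\rangle_{\mu_n}$ depends on the spins outside $B_i(t)$ only through an effective field at each boundary leaf. On a tree rooted at $\phi$ with $D$ children and subtrees of depth $t$, the standard Ising computation then gives
\eq
\langle \sigma_\phi\rangle = \tanh\Big(B+\sum_{i=1}^{D}\xi(h_i^{(t)})\Big),
\en
where $\xi$ is defined in \eqref{eq-defxi} and $h_i^{(t)}$ is the effective field generated in the $i$-th subtree (which has offspring distribution $K$ at every level, hence satisfies the distributional recursion \eqref{eq-recursion}). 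This identifies the local contribution exactly.

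Next I would invoke Definition \ref{ass-convtree} to replace $B_i(t)$ by $\calT(D,K,t)$ in distribution. To transfer the tree identity to the graph, I would use the comparison with the Ising model on the random tree established in \cite{DemMon10} (and extended to the power-law setting in \cite{DomGiaHof10}), which says that $M_n(\beta,B)\to \expec\big[\langle \sigma_\phi\rangle_{\calT(D,K,t),\beta,B}\big]$ as first $n\to \infty$ and then $t\to\infty$, with appropriate (say, free or plus) boundary conditions; the interchange is justified by the uniform sparsity assumption via the usual truncation on vertex degrees, combined with $|\tanh|\leq 1$ so that tail contributions from vertices of large degree are uniformly small. This step is essentially a bookkeeping adaptation of the Dembo--Montanari argument; in the power-law case with $\tau\in(2,3)$ one additionally uses the truncation lemmas of \cite{DomGiaHof10} to ensure that the sum over the root children converges despite $\expec[D]$ possibly behaving badly in approximations.

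Finally I would pass to $t\to\infty$ inside the expectation. By \cite[Proposition 1.7]{DomGiaHof10}, for $\beta\geq 0$ and $B>0$ the recursion \eqref{eq-recursion} has a unique fixed point $h$, and $h^{(t)}\stackrel{d}{\Rightarrow} h$. Since $\xi$ is bounded and continuous and $|\tanh|\leq 1$, dominated convergence (applied conditionally on $D$, and then in $D$ using $\expec[D]<\infty$ in the light-tail case or the truncation bound in the power-law case) yields
\eq
\expec\Big[\tanh\Big(B+\sum_{i=1}^{D}\xi(h_i^{(t)})\Big)\Big]
\longrightarrow
\expec\Big[\tanh\Big(B+\sum_{i=1}^{D}\xi(h_i)\Big)\Big],
\en
which is the claimed formula. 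The same computation applied directly on $\calT(D,K,\infty)$, where the local recursive identity holds by construction of the $+/f$ Gibbs measures, gives the statement on the random Bethe tree.

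The main obstacle I expect is the interchange of the $n\to\infty$ and $t\to\infty$ limits in the presence of a power-law degree distribution: the root degree $D$ can be large, so one has to argue that the contribution from atypically high-degree roots (and from the rare non-tree-like neighborhoods) is negligible. This is where uniform sparsity, the boundedness of $\xi$, and a careful truncation of $D$ before taking $t\to\infty$ do the real work; once this is controlled, the rest is essentially a continuous mapping plus the fixed-point result \cite[Proposition 1.7]{DomGiaHof10}.
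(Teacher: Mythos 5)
Your outline is correct and follows essentially the same route as the paper: write $M_n(\beta,B)$ as the expected magnetization of a uniformly chosen root, localize with $+$/free boundary conditions at depth $\ell$, pass to the tree $\calT(D,K,\ell)$ by local weak convergence, compute the root magnetization by pruning as $\tanh\big(B+\sum_{i=1}^{D}\xi(h_i^{(\ell-1)})\big)$, and close the argument with the uniqueness of the fixed point of \eqref{eq-recursion}. The one substantive difference is the step you yourself flag as the main obstacle: you propose to justify the interchange of the $n\to\infty$ and $t\to\infty$ limits by uniform sparsity and a truncation of high degrees. The paper avoids any interchange altogether by the GKS inequality \cite{KelShe68}: for every $n$ and every fixed $\ell$ one has the pointwise sandwich $\langle\sigma_\phi\rangle^{\ell,f}\leq\langle\sigma_\phi\rangle\leq\langle\sigma_\phi\rangle^{\ell,+}$, and each bound is a $[-1,1]$-valued functional of the ball $B_\phi(\ell)$ only. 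Hence, at fixed $\ell$, letting $n\to\infty$ turns the two bounds into the tree quantities $\tanh\big(B+\sum_{i=1}^{D}\xi(h_i^{(\ell-1)})\big)$ and $\tanh\big(B+\sum_{i=1}^{D}\xi(h_i^{'(\ell-1)})\big)$, i.e.\ the recursion \eqref{eq-recursion} started from the free initial condition $h^{(0)}=B$ and from the plus initial condition $h^{'(0)}=\infty$ (pruning as in \cite[Lemma~4.1]{DemMon10}); then $\ell\to\infty$ squeezes the two sides to the same limit because the recursion has a unique fixed point \cite[Proposition~1.7]{DomGiaHof10}. In particular no truncation of $D$, no control of atypically high-degree roots, and no extra use of $\expec[D]<\infty$ is needed: everything is bounded by $1$, so bounded functionals of the ball converge under Definition~\ref{ass-convtree}, and the monotone sandwich does the work you were planning to extract from uniform sparsity. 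To make your write-up complete, replace the appeal to ``uniform sparsity plus truncation'' for the limit interchange by this explicit monotonicity argument (or carry out the truncation estimates in detail, which is more effort than the problem requires).
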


This \ch{proposition} was proved in~\cite[\ch{Corollary 1.6(a)}]{DomGiaHof10} by differentiating the expression for the
thermodynamic limit of the pressure per particle that was first obtained. Here we present a more intuitive proof:
\begin{proof}[Proof of Proposition~\ref{prop-Magnetization}]
Let $\ch{\phi}$ be a vertex picked uniformly at random from $[n]$ and $\expec_n$ be the corresponding expectation. Then,
\eq
M_n(\beta,B) = \frac1n \sum_{i=1}^n \langle\sigma_i\rangle = \expec_n[\langle\sigma_{\ch{\phi}}\rangle].
\en
Denote by $\langle\cdot\rangle^{\ell,+/f}$ the \col{expectations with respect to the} Ising measure with $+$/free boundary conditions on vertices at graph distance $\ell$ from $\ch{\phi}$. Note that $\langle \sigma_{\ch{\phi}}\rangle^{\ell,+/f}$ only depends on the spins of vertices in $B_{\ch{\phi}}(\ell)$. By the GKS inequality~\cite{KelShe68},
\eq
\langle\sigma_{\ch{\phi}}\rangle^{\ell,f}\leq\langle\sigma_{\ch{\phi}}\rangle\leq\langle\sigma_{\ch{\phi}}\rangle^{\ell,+}.
\en
Taking the limit $n\rightarrow\infty$, the ball $B_{\ch{\phi}}(\ell)$ has the same distribution as the random tree $\calT(D,K,\ell)$, because of the locally tree-like nature of the graph sequence. Conditioned on the tree $\calT$, we can prune the tree, see~\cite[Lemma~4.1]{DemMon10}, to obtain that
\eq
\langle\sigma_{\ch{\phi}}\rangle^{\ell,f} = \tanh\Big(B+\sum_{i=1}^{D} \xi(h_i^{(\ell-1)})\Big).
\en
Similarly,
\eq
\langle\sigma_{\ch{\phi}}\rangle^{\ell,+} = \tanh\Big(B+\sum_{i=1}^{D} \xi(h_i^{'(\ell-1)})\Big),
\en
where $h_i^{'(t+1)}$ also satisfies~\eqref{eq-recursion}, but has initial value $h^{'(0)}=\infty$. Since this recursion has a unique fixed point~\cite[Prop\chs{osition}~1.7]{DomGiaHof10}, we prove the proposition by taking the limit $\ell\rightarrow\infty$ and taking the expectation over the tree $\calT(D,K,\infty)$.
\end{proof}

To study the critical behavior we \chs{investigate the function $\xi(x)=\atanh(\betahat \tanh x)$ and} prove two important bounds that play a crucial role throughout this paper:
\begin{lemma}[\ch{Properties of $x\mapsto \xi(x)$}]
\label{lem-boundatanh}
For all $x,\beta\geq0$,
\eq
\betahat x -\frac{\betahat}{3(1-\betahat^2)}x^3 \leq \xi(x) \leq \betahat x.
\en
The upper bound holds with strict inequality if $x,\beta>0$.
\end{lemma}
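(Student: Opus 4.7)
The plan is to use direct calculus on $\xi(x)=\atanh(\hat\beta\tanh x)$, exploiting $\xi(0)=0$, $\xi'(0)=\hat\beta$, and a second-derivative estimate. Since
$$\xi'(x) = \hat\beta\,\frac{1-\tanh^2 x}{1-\hat\beta^2\tanh^2 x},$$
and $\hat\beta=\tanh\beta\in[0,1]$ gives $\hat\beta^2\tanh^2 x\le\tanh^2 x$, we immediately read off $\xi'(x)\le\hat\beta$, with strict inequality whenever $\tanh^2 x>0$ and $\hat\beta<1$, i.e.\ whenever $x,\beta>0$. The upper bound then follows by integrating from $0$ to $x$ using $\xi(0)=0$, and the strict form of the upper bound follows because the derivative inequality is strict on a set of positive measure.

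For the lower bound I would differentiate once more. A short computation (writing $u=\tanh x$, $u'=1-u^2$ and simplifying the numerator) gives
$$\xi''(x) = -\,\frac{2\hat\beta(1-\hat\beta^2)\tanh x\,(1-\tanh^2 x)}{(1-\hat\beta^2\tanh^2 x)^2}.$$
To turn this into a clean bound, use $\tanh x\le x$ for $x\ge 0$, $1-\tanh^2 x\le 1$, and $(1-\hat\beta^2\tanh^2 x)^2\ge(1-\hat\beta^2)^2$, which together yield
$$-\xi''(x)\;\le\;\frac{2\hat\beta(1-\hat\beta^2)\,x}{(1-\hat\beta^2)^2}\;=\;\frac{2\hat\beta}{1-\hat\beta^2}\,x.$$

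Finally I would introduce the auxiliary function
$$g(x)\;=\;\xi(x)-\hat\beta x+\frac{\hat\beta}{3(1-\hat\beta^2)}\,x^3,$$
which satisfies $g(0)=0$, $g'(0)=\xi'(0)-\hat\beta=0$, and, by the second-derivative bound just proved,
$$g''(x)\;=\;\xi''(x)+\frac{2\hat\beta}{1-\hat\beta^2}\,x\;\ge\;0\qquad\text{for all } x\ge 0.$$
Integrating $g''\ge 0$ twice from $0$ shows $g'(x)\ge 0$ and then $g(x)\ge 0$, which is exactly the claimed lower bound. The case $\beta=0$ is trivial (both sides are $0$), and the case $\beta=\infty$ is not actually used in the paper but would follow by taking $\hat\beta\uparrow 1$ in the upper bound.

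The only mild obstacle is the simplification of $\xi''(x)$: the cross-term $-2u u'(1-\hat\beta^2 u^2)-(1-u^2)(-2\hat\beta^2 u u')$ must be arranged to factor as $-2u u'(1-\hat\beta^2)$, which is the source of the factor $(1-\hat\beta^2)$ appearing in both the numerator of $\xi''$ and the denominator of the cubic coefficient in the statement; this cancellation is what makes the estimate tight at $x=0$ and yields the sharp constant $1/(3(1-\hat\beta^2))$.
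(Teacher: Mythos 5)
Your proof is correct, and it reaches the paper's bounds by a slightly different elementary route. The paper applies Taylor's theorem with Lagrange remainder: for the upper bound it expands to second order and uses $\xi''(\zeta)\le 0$, and for the lower bound it expands to third order, using $\xi''(0)=0$ together with the constant bound $\xi'''(\zeta)\ge -2\hat\beta/(1-\hat\beta^2)$, whose verification requires computing and estimating the third derivative. You instead integrate derivative bounds: $\xi'(x)\le\hat\beta$ integrated from $0$ gives the upper bound, and its strictness for $x,\beta>0$ follows since then $\hat\beta\in(0,1)$ and $\xi'(x)<\hat\beta$ for all $x>0$ (note that strictness of the derivative inequality also uses $\hat\beta>0$, not only $\hat\beta<1$ and $x>0$, but this is guaranteed by $\beta>0$, so your conclusion stands). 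For the lower bound you never touch $\xi'''$: the pointwise estimate $-\xi''(x)\le \frac{2\hat\beta}{1-\hat\beta^2}\,x$, obtained from the exact formula for $\xi''$ via $\tanh x\le x$, $1-\tanh^2x\le 1$ and $1-\hat\beta^2\tanh^2x\ge 1-\hat\beta^2$, is integrated twice through the auxiliary function $g$, using $g(0)=g'(0)=0$. Both arguments are of comparable length and produce the identical cubic coefficient $\hat\beta/(3(1-\hat\beta^2))$; what your version buys is that only the (easily factored) second derivative plus the classical inequality $\tanh x\le x$ is needed, avoiding the fiddlier computation and sign analysis of $\xi'''$ in \eqref{eq-thirdderxi}, at the cost of introducing the comparison function $g$ and a double integration, which is essentially Taylor's theorem with integral remainder in disguise.
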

\begin{proof}
By Taylor's theorem,
\eq
\xi(x) = \xi(0)+\xi'(0)x+\xi''(\zeta)\frac{x^2}{2},
\en
for some $\zeta \in (0,x)$. It is easily verified that $\xi(0)=0$,
\eq
\xi'(0) = \frac{\betahat (1-\tanh^2 x)}{1-\betahat^2\tanh^2 x}\bigg|_{x=0} = \betahat,
\en
and
\eq\label{eq-secondderxi}
\xi''(\zeta) = -\frac{2 \betahat (1-\betahat^2) (\tanh \zeta) (1-\tanh^2 \zeta)}{(1-\betahat^2\tanh^2 \zeta)^2} \leq 0,
\en
thus proving the upper bound. If $x,\beta>0$ then also $\zeta>0$ and hence the above holds with strict inequality.

For the lower bound, note that $\xi''(0)=0$ and
\begin{align}\label{eq-thirdderxi}
\xi'''(\zeta) &= -\frac{2 \betahat (1-\betahat^2) (1-\tanh^2 \zeta)}{(1-\betahat^2\tanh^2 \zeta)^3}\left(1-3(1-\betahat^2)\tanh^2\zeta-\betahat^2\tanh^4\zeta\right) \nonumber\\
&\geq -\frac{2 \betahat(1-\betahat^2)(1-\tanh^2\zeta)}{(1-\betahat^2)^2(1-\tanh^2\zeta)}=-\frac{2 \betahat}{1-\betahat^2}.
\end{align}
Thus, for some $\zeta\in(0,x)$,
\eq
\xi(x) = \xi(0)+\xi'(0)x +\xi''(0)\frac{x^2}{2}+ \xi'''(\zeta)\frac{x^3}{3!} \geq \betahat x -\frac{2 \betahat}{1-\betahat^2}\frac{x^3}{3!}.
\en
\end{proof}

We next study tail probabilites of $(\rho_k)_{k\geq 0}$. Here, for a probability distribution
$(q_k)_{k\geq 0}$ on the integers, we write $q_{\geq k}=\sum_{\ell \geq k}q_\ell$.

\begin{lemma}[Tail probabilities of $(\rho_k)_{k\geq 0}$]
\label{lem-tail-rho}
Assume that \eqref{eq-ppowerlaw} holds for some $\tau>2$.
Then, \col{for the size-biased distribution defined in (\ref{eq-defrho})},
there exist $\col{0 <}  c_{\rho}\leq C_{\rho}$ such that, for all $k\geq 1$,
	\eq
	c_{\rho} k^{-(\tau-2)}\leq \rho_{\geq k} \leq C_{\rho} k^{-(\tau-2)}.
	\en
\end{lemma}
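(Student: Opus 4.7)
The plan is to express $\rho_{\geq k}$ in terms of the tail probabilities $p_{\geq j}$ by a standard summation-by-parts manipulation, and then insert the two-sided power-law bounds \eqref{eq-ppowerlaw} on $(p_k)_{k\geq 1}$. First, using the definition \eqref{eq-defrho} and the substitution $m=\ell+1$,
\[
\rho_{\geq k}=\sum_{\ell\geq k}\frac{(\ell+1)p_{\ell+1}}{\expec[D]}=\frac{1}{\expec[D]}\sum_{m\geq k+1}m\,p_m.
\]
Note that $\expec[D]=\sum_{j\geq 1}p_{\geq j}<\infty$ since $\tau>2$ and $p_{\geq j}\leq C_p j^{-(\tau-1)}$ is summable, so the denominator is harmless.

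Next, I would interchange the order of summation via $m=\sum_{j=1}^{m}1$ to rewrite
\[
\sum_{m\geq k+1}m\,p_m=\sum_{j\geq 1}\sum_{m\geq \max(j,k+1)}p_m=(k+1)\,p_{\geq k+1}+\sum_{j\geq k+2}p_{\geq j}.
\]
This reduces both bounds to controlling a single tail and a sum of tails of $P$.

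For the upper bound I would apply $p_{\geq j}\leq C_p j^{-(\tau-1)}$ termwise and bound the residual sum by an integral:
\[
\sum_{j\geq k+2}p_{\geq j}\leq C_p\int_{k+1}^{\infty}x^{-(\tau-1)}\dint x=\frac{C_p}{\tau-2}\,(k+1)^{-(\tau-2)},
\]
which combines with the first term to give $\sum_{m\geq k+1}m\,p_m\leq C_p\bigl(1+(\tau-2)^{-1}\bigr)(k+1)^{-(\tau-2)}\leq C_{\rho}k^{-(\tau-2)}$ after absorbing the constant $2^{\tau-2}$ needed to pass from $k+1$ to $k$. For the lower bound it suffices to discard the nonnegative residual sum and keep only the first term: by $p_{\geq k+1}\geq c_p(k+1)^{-(\tau-1)}$,
\[
\sum_{m\geq k+1}m\,p_m\geq (k+1)\,p_{\geq k+1}\geq c_p(k+1)^{-(\tau-2)}\geq c_p 2^{-(\tau-2)}k^{-(\tau-2)}
\]
for all $k\geq 1$. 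Dividing through by $\expec[D]$ produces explicit constants $c_\rho,C_\rho$.

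There is no genuine obstacle here: the argument is pure bookkeeping on power-law tails, using only the assumption $\tau>2$ (to ensure $\expec[D]<\infty$ and the convergence of the integral $\int^{\infty}x^{-(\tau-1)}\dint x$). The only mild care needed is to convert $(k+1)^{-(\tau-2)}$ into $k^{-(\tau-2)}$ in the final estimates, which costs a multiplicative factor of $2^{\tau-2}$ and is absorbed into the constants.
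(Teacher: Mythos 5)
Your proof is correct and follows essentially the same route as the paper: both reduce $\rho_{\geq k}$ via an interchange of summation (the paper's partial-summation identity) to the decomposition $(k+1)p_{\geq k+1}+\sum_{j\geq k+2}p_{\geq j}$, take only the first term for the lower bound, and insert the power-law tail bounds plus an integral comparison for the upper bound. The only cosmetic remark is that for the upper bound no factor $2^{\tau-2}$ is needed, since $(k+1)^{-(\tau-2)}\leq k^{-(\tau-2)}$ already.
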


\begin{proof}
The lower bound follows directly from the fact that $\rho_{\geq k}\geq \ch{ (k+1)p_{\geq k+1}/\expec[D]}$,
and \eqref{eq-ppowerlaw}.
For the upper bound, we note that for any probability distribution $(q_k)_{k\geq 0}$ on the \col{non-negative} integers,
we have the partial summation identity
	\eq
	\label{partial-summation}
	\sum_{k\geq 0} q_k f(k) =f(0)+\sum_{\ell\geq 1} q_{\geq \ell} [f(\ell)-f(\ell-1)],
	\en
provided that either $f(k)q_{\geq k}\rightarrow 0$ \ch{as $k\rightarrow \infty$ or
$k\mapsto f(k)$} is either non-decreasing or non-increasing. Indeed,
	\eq
	\sum_{k\geq 0} q_k f(k)=f(0)+\sum_{k\geq 0} q_k [f(k)-f(0)]	
	=\ch{f(0)+}\sum_{k\geq 0} q_k \sum_{\ell=1}^k [f(\ell)-f(\ell-1)].
	\en
Interchanging the summation order (which is allowed by our assumptions) p\chs{r}ovides the proof.

We start by proving bounds on $\rho_{\geq k}$. We rewrite
	\eq
	\rho_{\geq k} = \sum_{\ell\geq k} \frac{(\ell+1)p_{\ell+1}}{\expec[D]}
	=\sum_{\chs{\ell}\geq 0} f(\ell)p_{\ell+1},
	\en
where $f(\chs{\ell})=(\ell+1)\indic{\chs{\ell}\geq k}/\expec[D]$. By \eqref{partial-summation}
\col{with $q_{\chs{\ell}}=p_{\chs{\ell}+1}$},
\ch{for $k\geq 1$ so that $f(0)=0$,}
	\eq
	\rho_{\geq k} =\sum_{\col{\ell\geq 1}} [f(\ell)-f(\ell-1)]p_{\geq \ell+1} =\frac{\col{(k+1)}p_{\geq k+1}}{\expec[D]}
	+\frac{1}{\expec[D]}\sum_{\ell\geq \col{k+1}} p_{\geq \ell+1}.
	\en
From~\eqref{eq-ppowerlaw}, it follows that
	\eq
	\label{rho-bound}
	\rho_{\geq k}
	\leq \frac{\ch{C_p}}{\expec[D]}(k+1)^{-(\tau-2)}
	+\sum_{\ell\geq k+1} \frac{\ch{C_p}}{\expec[D]}(\ell+1)^{-(\tau-\col{1})},
	\en
so that there exists a constant $C_{\rho}$ such that
	\eq
	\label{rho-tail-bds}
	\rho_{\geq k} \leq C_{\rho} k^{-(\tau-2)}.
	\en
\end{proof}

When computing the critical exponents for $\tau\in(3,5]$, we often split the analysis into two cases: one where $K$ is small
and one where $K$ is large. For this we need bounds on truncated moments of $K$ \ch{which are the content of the next lemma.}

\begin{lemma}[Truncated moments of $K$]
\label{lem-truncmoment}
\col{Assume that \eqref{eq-ppowerlaw} holds for some $\tau>2$. Then}
there exist \ch{constants} $C_{a,\tau} \col{>0}$ such that,
\ch{as $\ell \rightarrow \infty,$}
	\eq
	\expec\left[K^a \ind_{\{K \leq \ell\}}\right] \leq
	\begin{cases}
	C_{a,\tau}\ell^{a-(\tau-2)} &\text{\ch{when} } a > \tau-2,\\
	C_{\tau-2,\tau}\log \ell &\text{\ch{when} }  a=\tau-2.
	\end{cases}
	\en
\ch{and, when $a<\tau-2$,}
	\eq
	\expec\left[K^a \ind_{\{K > \ell\}}\right] \leq C_{a,\tau}\ell^{a-(\tau-2)}.
	\en
Finally, when $\tau=5$, there exists a constant $c_{3,5} \col{>0}$ such that,
\ch{as $\ell \rightarrow \infty,$}
	\eq
	\label{log-correct-lb-tau5}
	\expec\left[K(K-1)(K-2)\indic{K\leq \ell}\right]\geq c_{3,5}\log{\ell}.
	\en
\end{lemma}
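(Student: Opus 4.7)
The plan is to apply the partial summation identity \eqref{partial-summation} (already established and used in the proof of Lemma~3.2) with $q_k=\rho_k$, combined with the two-sided tail bounds $c_\rho k^{-(\tau-2)}\le \rho_{\geq k}\le C_\rho k^{-(\tau-2)}$ from Lemma~3.2. The key point is that one cannot simply bound $\rho_k\le \rho_{\geq k}$ (this loses a full power of $k$); one must convert pointwise sums against $\rho_k$ into tail sums against $\rho_{\geq k}$ via summation by parts, since only the tail has tight upper and lower bounds.

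For the first two upper bounds, I would apply summation by parts to $\sum_{k=1}^{\ell} k^a \rho_k=\sum_{k=1}^{\ell} k^a(\rho_{\geq k}-\rho_{\geq k+1})$ to obtain
\eq
\expec[K^a\ind_{\{K\leq \ell\}}]=\rho_{\geq 1}+\sum_{k=2}^{\ell}\bigl[k^a-(k-1)^a\bigr]\rho_{\geq k}-\ell^a\rho_{\geq \ell+1}.
\en
Using the elementary bound $k^a-(k-1)^a\leq C_a k^{a-1}$ together with $\rho_{\geq k}\leq C_\rho k^{-(\tau-2)}$, the right-hand side is controlled by $C\sum_{k=1}^{\ell} k^{a-1-(\tau-2)}+\ell^{a-(\tau-2)}$. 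This sum is of order $\ell^{a-(\tau-2)}$ when $a>\tau-2$ (since then the exponent $a-\tau+1>-1$ makes the sum dominated by its largest term) and of order $\log\ell$ when $a=\tau-2$ (harmonic-type sum). The $\ell^a \rho_{\geq \ell+1}$ boundary term is absorbed into the same order $\ell^{a-(\tau-2)}$.

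The bound on $\expec[K^a\ind_{\{K>\ell\}}]$ for $a<\tau-2$ is analogous: summation by parts on the complementary tail gives $\ell^a\rho_{\geq \ell+1}+\sum_{k>\ell}[k^a-(k-1)^a]\rho_{\geq k}$, and applying $\rho_{\geq k}\le C_\rho k^{-(\tau-2)}$ yields $C\sum_{k>\ell} k^{a-1-(\tau-2)}$. Now $a-1-(\tau-2)<-1$, so the tail sum converges and is of order $\ell^{a-(\tau-2)}$.

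For the lower bound \eqref{log-correct-lb-tau5}, since $k(k-1)(k-2)\geq \tfrac{1}{C} k^3$ for $k\geq 3$ and $k(k-1)(k-2)\geq 0$ always, it suffices to bound $\sum_{k=3}^{\ell} k^3 \rho_k$ from below. Summation by parts gives
\eq
\sum_{k=3}^{\ell} k^3\rho_k=27\rho_{\geq 3}+\sum_{k=4}^{\ell}\bigl[k^3-(k-1)^3\bigr]\rho_{\geq k}-\ell^3\rho_{\geq \ell+1}.
\en
Now use $k^3-(k-1)^3=3k^2-3k+1\geq c k^2$ for $k$ large, the lower tail bound $\rho_{\geq k}\geq c_\rho k^{-(\tau-2)}=c_\rho k^{-3}$ (since $\tau=5$), and the fact that $\ell^3\rho_{\geq \ell+1}\leq C_\rho$ is $O(1)$ uniformly in $\ell$. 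The main term is $\sum_{k} k^2\cdot k^{-3}=\sum_k k^{-1}\geq c\log\ell$, which dominates the $O(1)$ boundary term and yields the desired lower bound $c_{3,5}\log\ell$ for $\ell$ large enough.

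There is no real obstacle here: the only subtlety is recognizing that one must summation-by-parts to convert pointwise mass sums into tail sums (both for the sharp upper bounds and because the lower tail bound gives no pointwise information about $\rho_k$), and then keeping track of the boundary terms; everything else is a direct application of Lemma~3.2 together with standard $p$-series asymptotics.
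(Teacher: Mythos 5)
Your proposal is correct and follows essentially the same route as the paper: Abel/partial summation converts sums against $\rho_k$ into tail sums against $\rho_{\geq k}$, the two-sided tail bounds of Lemma \ref{lem-tail-rho} are applied, and standard $p$-series estimates give the stated orders. The only (harmless) difference is in the lower bound \eqref{log-correct-lb-tau5}: the paper truncates the tail sums at $\sqrt{\ell}$ so the boundary contribution is $o(1)$, whereas you keep the full range and observe that the subtracted boundary term $\ell^3\rho_{\geq\ell+1}=O(1)$ is dominated by the $c\log\ell$ main term, which is equally valid.
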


\begin{proof}
We start by bounding the truncated moments of $K$. We rewrite, using
\eqref{partial-summation} and with $f(k)=k^a \indic{k\leq \ell}$,
	\eq
	\expec\left[K^a \indic{K \leq \ell}\right]
	=\sum_{k=0}^{\infty} f(k)\rho_k
	=\sum_{k=\col{1}}^{\infty} [f(k)-f(k-1)]\rho_{\geq k}
	\leq \sum_{k=\col{1}}^{\col{\lfloor{\ell}\rfloor}}[k^a-(k-1)^a]\rho_{\geq k}.
	\en
Using $k^a-(k-1)^{a}=a\int_{k-1}^k x^{a-1}dx\leq a k^{a-1}$, we arrive at
	\eq
	\expec\left[K^a \indic{K \leq \ell}\right] \leq a C_{\rho}
	\sum_{k=1}^{\lfloor\ell\rfloor} k^{a-1} \col{k}^{-(\tau-2)}
	\leq a C_{\rho}\sum_{k=\col{1}}^{\lfloor\ell\rfloor+1} k^{a-(\tau-1)}.
	\en
Note that \ch{$k\mapsto k^{a-(\tau-1)}$ is either increasing or decreasing.} Hence,
	\eq
	\sum_{k=\col{1}}^{\lfloor\ell\rfloor+1} k^{a-(\tau-1)} \leq \int_{1}^{\ell+2} k^{a-(\tau-1)} \dint k.
	\en
For $a>\tau-2$,
	\eq
	\int_{1}^{\ell+2} k^{a-(\tau-1)} \dint k \leq \ch{\frac{2}{a+2-\tau}\ell^{a-(\tau-2)},}
	\en
whereas for $a=\tau-2$,
	\eq
	\int_{1}^{\ell+2} k^{a-(\tau-1)} \dint k \leq 2 \log \ell.
	\en
Similarly, for $a<\tau-2$,
	\begin{align}
	\expec\left[K^a \indic{K > \ell}\right] &=\lceil \ell \rceil^{a} \rho_{\geq \ell}
	+\sum_{k>\ell} [k^a-(k-1)^a] \rho_{\geq k}\\
	&\leq C_{\rho}\lceil \ell \rceil^{a-(\tau-2)} +
	 aC_{\rho}\sum_{\lfloor\ell\rfloor+1}^\infty k^{\ch{a-1}} (k+1)^{-(\tau-2)} \leq C_{a,\tau} \ell^{a-(\tau-2)}.\nn
	\end{align}
Finally, we prove \eqref{log-correct-lb-tau5}, for which we compute with $f(k)=k(k-1)(k-2)$,
	\eq
	\expec\left[K(K-1)(K-2)\indic{K\leq \ell}\right]
	=\sum_{k=\col{1}}^{\infty} [f(k)-f(k-1)]\sum_{l=k}^{\ell}\rho_l
	=\sum_{k=\col{3}}^{\infty} 3(k-1)(k-2)\sum_{l=k}^{\ell}\rho_l.
	\en
We bound this from below by
	\eq
	\expec\left[K(K-1)(K-2)\indic{K\leq \ell}\right]
	\geq \sum_{k=0}^{\sqrt{\ell}} 3(k-1)(k-2)[\rho_{\geq k}-\rho_{\geq \ell}].
	\en
By Lemma \ref{lem-tail-rho}, for $\tau=5$, the contribution due to $\rho_{\geq \ell}$ is at \ch{most}
	\eq
	\ell^{3/2} \rho_{\geq \ell}\leq C_{\rho} \ell^{-3/2}=o(1),
	\en
while the contribution due to $\rho_{\geq k}$ and using $3(k-1)(k-2)\geq k^2$ for every $k\geq 4$, is at least
	\eq
	c_{\rho}\sum_{k=4}^{\sqrt{\ell}} k^{-1}\geq c_{\rho}\int_4^{\sqrt{\ell}+1}\frac{ {\rm d} x}{x}\ch{=} c_{\rho} [\log{(\sqrt{\ell}+1)}-\log{4}],
	\en
which proves the claim by \ch{chosing the constant $c_{3,5}$ correctly.}
\end{proof}

\section{Critical temperature\label{sec-CritTemp}}
In this section we compute the critical temperature.

\begin{proof}[Proof of Theorem~\ref{thm-CritTemp}]
Let $\beta^*=\atanh(1/\nu)$. We first show that if $\beta < \beta^*$, then
\eq
\lim_{B\searrow0} M(\beta,B) = 0,
\en
which implies that $\beta_c\geq \beta^*$. Later, we show that if $\lim_{B\searrow0}M(\beta,B) = 0$ then $\beta \leq \beta^*$, implying that $\beta_c\leq\beta^*$.

\paragraph{Proof of $\beta_c\geq \beta^*$.} Suppose that $\beta<\beta^*$. Then, by the fact that $\tanh x \leq x$ and Wald's identity,
	\eq\label{eq-crittemp1}
	M(\beta,B) = \expec\left[\tanh\left(B+\sum_{i=1}^D\xi(h_i)\right)\right]
	\leq B+\expec[D]\expec[\xi(h)].
	\en
We use the upper bound in Lemma~\ref{lem-boundatanh} to get
	\eq\label{eq-crittemp2}
	\expec[\xi(h)] = \expec[\atanh(\betahat \tanh h)] \leq \betahat \expec[h]
	= \betahat\left(B+\nu\expec[\xi(h)]\right).
	\en
Further, note that
	\eq\label{eq-crittemp3}
	\expec[\xi(h)]= \expec[\atanh(\betahat \tanh h)]\leq\beta,
	\en
because $\tanh h \leq 1$.
Applying inequality~\eqref{eq-crittemp2} $\ell$ times to~\eqref{eq-crittemp1} and subsequently using inequality~\eqref{eq-crittemp3} once gives
	\eq
	M(\beta,B) \leq B + B \betahat\expec[D]\frac{1-(\betahat\nu)^\ell}{1-\betahat\nu}
	+ \beta \expec[D] (\betahat \nu)^\ell.
	\en
Hence,
	\begin{align}
	M(\beta,B) &\leq \limsup_{\ell\rightarrow\infty}
	\left(B + B \betahat\expec[D]\frac{1-(\betahat\nu)^\ell}{1-\betahat\nu}
	+ \beta \expec[D] (\betahat \nu)^\ell \right) \nn\\
	&= B\left(1+\betahat\expec[D]\frac{1}{1-\betahat\nu}\right),
	\end{align}
because $\betahat <\betahat^*= 1/\nu$. Therefore,
	\eq
	\lim_{B\searrow 0} M(\beta,B) \leq \lim_{B\searrow 0} B
	\left(1+\betahat\expec[D]\frac{1}{1-\betahat\nu}\right) = 0.
	\en
This proves the lower bound on $\beta_c$.

\paragraph{Proof of $\beta_c\leq \beta^*$.}
We adapt Lyons' proof in~\cite{Lyo89} for the critical temperature of deterministic trees to the random tree to show that $\ch{\beta_c} \leq \beta^*$. Assume that $\lim_{B\searrow0}M(\beta,B) = 0$. Note that Proposition~\ref{prop-Magnetization} shows that the magnetization $M(\beta,B)$ is equal to the expectation over the random tree $\calT(D,K,\infty)$ of the root magnetization. Hence, if we denote the root of the tree $\calT(D,K,\infty)$ by $\phi$, then it follows from our assumption on $M(\beta,B)$ that, a.s., $\lim_{B\searrow0}\langle\sigma_\phi\rangle=0$.

We therefore condition on the tree $T=\calT(D,K,\infty)$ and if we suppose that $\lim_{B\searrow0} \langle\sigma_\phi\rangle=0$, then also $\lim_{B\searrow0} h(\phi) =0$. Because of~\eqref{eq-recursion}, we must then have, for all $v\in T$,
	\eq
	\lim_{B\searrow0}\lim_{\ell\rightarrow\infty} h^{\ell,+}(v) = 0,
	\en
where we can take $+$ boundary conditions, since the recursion converges to a unique fixed point~\cite[Proposition 1.7]{DomGiaHof10}. Now, fix $0<\beta_0<\beta$ and choose $\ell$ large enough and $B$ small enough such that, for some $\varepsilon=\varepsilon(\beta_0,\beta)>0$ that we choose later,
	\eq
	h^{\ell,+}(v) \leq \varepsilon,
	\en
for all $v\in T$ with $|v|=1$, where $|v|$ denotes the graph distance from $\phi$ to $v$. Note that $h^{\ell,+}(v) =\infty >  \varepsilon$ for $v\in T$ with $|v|=\ell$.

As in~\cite{Lyo89}, we say that $\Pi$ is a {\em cutset} if $\Pi$ is a finite subset of $T\setminus \{\phi\}$ and every path from $\phi$ to infinity intersects $\Pi$ at exactly one vertex $v\in\Pi$. We write $v \leq \Pi$ if every infinite path from $v$ intersects $\Pi$ and write $\sigma < \Pi$ if $\sigma \leq \Pi$ and $\sigma \notin \Pi$. Furthermore, we say that $w\leftarrow v$ if $\{w,v\}$ is an edge in $T$ and $|w|=|v|+1$. Then, since $h^{\ell,+}(v) =\infty >  \varepsilon$ for $v\in T$ with $|v|=\ell$, there is a unique cutset $\Pi$, such that $h^{\ell,+}(v) \leq \varepsilon$ for all $v \leq \Pi$, and for all $v \in \Pi$ there is at least one $w \leftarrow v$ such that $h^{\ell,+}(w) > \varepsilon$.

It follows from the lower bound in Lemma~\ref{lem-boundatanh} that, for $v < \Pi$,
	\eq
	h^{\ell,+}(v) = B+\sum_{w\leftarrow v} \xi(h^{\ell,+}(w))
	\geq \sum_{w\leftarrow v} \betahat h^{\ell,+}(w) - \frac{\betahat h^{\ell,+}(w)^3}{3(1-\betahat^2)}
	\geq \sum_{w\leftarrow v} \betahat h^{\ell,+}(w) \Big(1-\frac{\varepsilon^2}{3(1-\betahat^2)}\Big),
	\en
while, for $v\in\Pi$,
	\eq
	h^{\ell,+}(v) = B+\sum_{w\leftarrow v} \xi( \tanh h(w)) > \xi(\varepsilon).
	\en
If we now choose $\varepsilon>0$ such that
	\eq
	\betahat \Big(1-\frac{\varepsilon^2}{3(1-\betahat^2)^2}\Big) = \betahat_0,
	\en
which is possible because $\beta_0<\beta$, then,
	\eq
	h^{\ell,+}(\phi) \geq \sum_{v\in\Pi} \betahat_0^{|v|} \xi(\varepsilon).
	\en
Since $\xi(\varepsilon)>0$ and  $\lim_{B\searrow0}\lim_{\ell\rightarrow\infty} h^{\ell,+}(\phi) = 0$,
	\eq
	\inf_{\Pi} \sum_{v\in\Pi} \betahat_0^{|v|}=0.
	\en
From \cite[Proposition 6.4]{Lyo90} it follows that $\betahat_0 \leq 1/\nu$. This holds for all $\beta_0<\beta$, so
	\eq
	\beta \leq \atanh(1 / \nu) = \beta^*.
	\en
This proves the upper bound on $\beta_c$, thus concluding the proof.
\end{proof}

We next show that the phase transition at this critical temperature is \emph{continuous:}

\begin{lemma}[Continuous phase transition]\label{lem-hcto0}
It holds that
	\eq
	\lim_{B\searrow 0} \expec[\xi(h(\beta_c,B))] = 0, \qquad {\rm and }
	\qquad \lim_{\beta\searrow \beta_c} \expec[\xi(h(\beta,0^+))]=0.
	\en
\end{lemma}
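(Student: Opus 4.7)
The plan is to reduce both limits to a single rigidity statement: the distributional fixed-point equation
\[
Y \stackrel{d}{=} \sum_{i=1}^{K} \xi_{\beta_c}(Y_i),
\]
with $(Y_i)_{i\geq 1}$ i.i.d.\ copies of $Y$ independent of $K$, admits only $Y\equiv 0$ among nonnegative integrable laws. Monotone/dominated convergence will then transport this vanishing into the two limits of $\expec[\xi(h)]$.

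I would first record two structural facts. Monotonicity: by GKS on the truncated trees $\calT(D,K,\ell)$ (or equivalently by induction through \eqref{eq-recursion} starting from $h^{(0)} = B$, using that $x\mapsto \xi(x)$ and the recursion are monotone), the iterate $h^{(\ell)}(\beta,B)$ is a.s.\ nondecreasing in $\beta$ and in $B$ under the natural coupling, and the unique fixed point $h(\beta,B)$ inherits this. Integrability: the bound $\xi(x) \leq \beta$ (from $\tanh(x)\leq 1$) together with Wald's identity in the fixed-point relation gives
\[
\expec[h(\beta,B)] = B + \nu\,\expec[\xi(h(\beta,B))] \leq B + \nu \beta,
\]
uniformly in $(\beta,B)$ in a compact neighborhood of $(\beta_c, 0)$.

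Using the monotone coupling, I then set
\[
h(\beta_c,0^+) := \lim_{B\searrow 0} h(\beta_c,B), \qquad h_* := \lim_{\beta\searrow \beta_c} h(\beta,0^+),
\]
as a.s.\ decreasing limits that are integrable by the previous step. Joint continuity of $(\beta,x)\mapsto \xi_\beta(x)$, the dominated convergence theorem, and the fact that $K$ is a.s.\ finite allow me to pass the random sum in \eqref{eq-recursion} through the limit, so that both $h(\beta_c,0^+)$ and $h_*$ are nonnegative integrable solutions of the displayed equation for $Y$.

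For the rigidity step, taking expectations in the fixed-point equation gives $\expec[Y] = \nu\,\expec[\xi_{\beta_c}(Y)]$. The strict bound $\xi_{\beta_c}(x) < \betahat_c x$ for $x > 0$ from Lemma~\ref{lem-boundatanh}, combined with $\betahat_c \nu = 1$ from Theorem~\ref{thm-CritTemp}, forces
\[
\expec[\xi_{\beta_c}(Y)] < \betahat_c\,\expec[Y] = \betahat_c \nu\,\expec[\xi_{\beta_c}(Y)] = \expec[\xi_{\beta_c}(Y)]
\]
whenever $\prob[Y>0]>0$, which is absurd; hence $Y = 0$ a.s. Applied to $h(\beta_c,0^+)$ and $h_*$, this yields $\expec[\xi(h(\beta_c,0^+))] = 0$ and $\expec[\xi_{\beta_c}(h_*)] = 0$, and a last appeal to dominated convergence (or simply the definitions of $h(\beta_c,0^+)$ and $h_*$) produces the two limits in the lemma. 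The main obstacle I anticipate is the bookkeeping for the a.s.\ monotone coupling of the whole family $\{h(\beta,B)\}$ so that the limits above exist almost surely and the distributional identity \eqref{eq-recursion} is preserved along them; once this is in place, the argument reduces to a one-line application of Lemma~\ref{lem-boundatanh} at $\betahat_c\nu = 1$.
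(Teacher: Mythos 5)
Your argument is correct, but it runs along a different track than the paper's. The paper stays entirely at the scalar level: it observes that $B\mapsto \expec[\xi(h(\beta_c,B))]$ is non-decreasing and non-negative, so the limit $c$ exists, then applies Jensen's inequality (using the concavity of $x\mapsto\xi(x)$) inside the recursion to get $\expec[\xi(h)]\leq \xi\big(B+\nu\expec[\xi(h)]\big)$, passes to the limit to obtain $c\leq\xi(\nu c)$, and concludes $c=0$ from the strict bound $\xi(x)<\betahat_c x$ for $x>0$ together with $\betahat_c\nu=1$; the second limit is handled by the same one-line contradiction with iterated limits. You instead lift the whole argument to the level of the law of $h$: you build an a.s.\ monotone coupling (equivalently, use stochastic monotonicity of the iterates and hence of the unique fixed point), show the decreasing limits $h(\beta_c,0^+)$ and $h_*$ solve the critical $B=0$ fixed-point equation $Y\stackrel{d}{=}\sum_{i=1}^K\xi_{\beta_c}(Y_i)$, and then prove a rigidity statement via Wald's identity plus the same strict inequality from Lemma~\ref{lem-boundatanh} at $\betahat_c\nu=1$. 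Both proofs rest on exactly the same two ingredients (strictness of $\xi(x)<\betahat_c x$ and the value of $\beta_c$), but the mechanics differ: the paper trades your coupling/limit-exchange bookkeeping for a single application of Jensen, which makes the proof shorter and avoids having to verify that the distributional recursion survives the limit; your route needs that extra verification (passing the random sum through the weak limit via the quantile coupling, joint continuity of $\xi_\beta$, and a.s.\ finiteness of $K$ -- all doable, and correctly flagged by you as the main chore), but in exchange you do not need concavity of $\xi$ at all, and you obtain the stronger conclusion that the field itself vanishes, $h(\beta_c,B)\to 0$ in distribution, rather than only $\expec[\xi(h)]\to 0$. Note also that integrability of your limit $Y$ is automatic from the fixed-point relation, since $\expec[Y]=\nu\expec[\xi_{\beta_c}(Y)]\leq\nu\beta_c<\infty$, so Fatou is not even needed there.
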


\begin{proof}
\chs{Note that $\lim_{B\searrow 0} \expec[\xi(h(\beta_c,B))]=c$ exists, because $B\mapsto \expec[\xi(h(\beta_c,B))]$ is non-decreasing and non-negative.}
Assume, by contradiction, that $c>0$.
By the recursion in~\eqref{eq-recursion}, for $B>0$,
	\eq \label{eq-strictineq}
	\expec[\xi(h(\beta,B))] = \expec\Big[\xi\Big(B+ \sum_{i=1}^K\xi(h_i(\beta,B))\Big)\Big]
	\leq 	\xi\left(B+\nu \expec[\xi(h(\beta,B))]\right),
	\en
where the inequality holds because of Jensen's inequality and the concavity of $h\mapsto\xi(h)$. Hence,
	\eq
	c=\lim_{\chs{B\searrow0}}\expec[\xi(h(\beta_c,B))]
	\leq \lim_{\chs{B\searrow0}}\xi\Big(B+\nu \expec[\xi(h(\beta_c,B))]\Big) = \xi(\nu c).
	\en
Since $\xi(x) < \betahat_c x$ for $x>0$ by Lemma~\ref{lem-boundatanh} and using $\betahat_c = 1/\nu$, we obtain
	\eq
	\xi(\nu c) < \betahat_c \nu c = c,
	\en
leading to a contradiction.

\ch{An adaptation of this} argument \ch{shows} \chs{the second statement of the lemma.}
\ch{Again $\beta\mapsto \expec[\xi(h(\beta,0^+))]$ is non-\chs{decreasing and non-negative and we} assume
that $\lim_{\beta\searrow \beta_c} \expec[\xi(h(\beta,0^+))]=c>0$. Then,
	\begin{align}
	c&=\lim_{\beta\searrow \beta_c} \expec[\xi(h(\beta,0^+))]
	=\lim_{\beta\searrow \beta_c} \lim_{B\searrow 0}\expec\Big[\xi\Big(B+ \sum_{i=1}^K\xi(h_i(\beta,B))\Big)\Big]\nn\\
	&\leq \lim_{\beta\searrow \beta_c} \lim_{B\searrow 0}\xi\Big(B+\nu \expec[\xi(h(\beta,B))]\Big)
	=\xi(\nu c),
	\end{align}
leading again to a contradiction when $c>0$.}
%
\end{proof}

\section{Critical exponents: Magnetization}
\label{sec-CritExpM}
In this section we prove that the critical exponents related to the magnetization, i.e., $\boldsymbol{\beta}$ and $\boldsymbol{\delta}$, take the values stated in Theorem~\ref{thm-CritExp}. The analysis \ch{involves} Taylor expansions performed up to the right order. By these Taylor expansions, higher moments of $\xi(h)$ appear. Therefore, we first bound these higher moments of $\xi(h)$ in terms of its first moment in Section~\ref{sec-MomentsXi}.

In Section~\ref{sec-boundsExi} we use these bounds to give appropriate bounds on $\expec[\xi(h)]$ which finally allow us to compute the critical exponents $\boldsymbol{\beta}$ and $\boldsymbol{\delta}$ in Section~\ref{sec-CritExpBetaDelta}.

\subsection{Bounds on higher moments of $\xi(h)$\label{sec-MomentsXi}}
\col{\chs{Throughout} Section \ref{sec-CritExpM}} we assume that $B$ is sufficiently close to zero and $\beta_c < \beta <\beta_c+\varepsilon$ for $\varepsilon$ sufficiently small. We write $c_i,C_i, i\geq1$ for constants that only depend on $\beta$ and moments of $K$, and satisfy
	\eq \label{eq-boundsCi}
	0 < \liminf_{\beta\searrow\beta_c} C_i(\beta) \leq \limsup_{\beta\searrow\beta_c} C_i(\beta) < \infty.
	\en
Here $C_i$ appears in upper bounds, while $c_i$ appears in lower bounds.
Furthermore, we write $e_i, i\geq1$ for error functions that only depend on $\beta, B, \expec[\xi(h)]$ and moments of $K$, and satisfy
	\eq \label{eq-boundsei}
	\limsup_{B\searrow0} e_i(\beta,B) < \infty \qquad {\rm and }
	\qquad \lim_{B\searrow0} e_i(\beta_c,B) =0.
	\en
\ch{Finally, we write $\nu_k=\expec[K(K-1)\cdots (K-k+1)]$ for the $k$th factorial moment of $K$, so that $\nu_1=\nu$.}

\begin{lemma}[Bounds on second moment of $\xi(h)$]
\label{lem-boundxih2}
Let $\beta\geq\beta_c$ and $B>0$. Then,
	\eq\label{eq-boundxih2}
	\expec[\xi(h)^2] \leq \left\{\begin{array}{ll} C_2 \expec[\xi(h)]^2 + B e_2 & \when
	\expec[K^2]<\infty, \\ \\
                                        C_2 \expec[\xi(h)]^2\log\left(1/\expec[\xi(h)]\right) + B e_2
	&\when \tau=4, \\ \\
                                        C_2 \expec[\xi(h)]^{\tau-2} + B e_2
	&\when \tau\in(3,4).\end{array} \right.
	\en
\end{lemma}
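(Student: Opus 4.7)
The plan is to exploit the distributional recursion $h \stackrel{d}{=} B + \sum_{i=1}^{K} \xi(h_i)$ together with the upper bound $\xi(x) \leq \betahat x$ from Lemma~\ref{lem-boundatanh}, which gives $\xi(h)^2 \leq \betahat^2 h^2$. Conditioning on $K$ and expanding the square yields
\begin{equation}
\expec[\xi(h)^2] \leq \betahat^2 \expec[h^2] = \betahat^2 \bigl(B^2 + 2 B \nu \expec[\xi(h)] + \nu \expec[\xi(h)^2] + \nu_2 \expec[\xi(h)]^2\bigr),
\end{equation}
where $\nu_2 = \expec[K(K-1)]$. Rearranging and using that $\betahat_c^2 \nu = \betahat_c = 1/\nu < 1$, so that $1-\betahat^2 \nu$ is bounded away from $0$ in a neighborhood of $\beta_c$, yields the first case of the lemma whenever $\nu_2 < \infty$.

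For the power-law cases $\tau \in (3,4]$ one has $\nu_2 = \infty$, so I would truncate. Introducing a cutoff $\ell$, split
\begin{equation}
\expec[\xi(h)^2] = \expec[\xi(h)^2 \ind_{\{K \leq \ell\}}] + \expec[\xi(h)^2 \ind_{\{K > \ell\}}].
\end{equation}
On $\{K \leq \ell\}$, use $\xi(h)^2 \leq \betahat^2 h^2$ and repeat the expansion above, but with the truncated moment $\expec[K(K-1) \ind_{\{K \leq \ell\}}]$ in place of $\nu_2$. On $\{K > \ell\}$, use the trivial bound $\xi(h) \leq \beta$ (since $\tanh h \leq 1$), giving $\expec[\xi(h)^2 \ind_{\{K > \ell\}}] \leq \beta^2 \rho_{\geq \ell+1}$. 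Combining and rearranging produces
\begin{equation}
(1-\betahat^2 \nu)\, \expec[\xi(h)^2] \leq \betahat^2 \expec[K(K-1) \ind_{\{K \leq \ell\}}]\, \expec[\xi(h)]^2 + \beta^2 \rho_{\geq \ell+1} + B e_2.
\end{equation}

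The proof then concludes by optimizing over $\ell$, taking $\ell \asymp 1/\expec[\xi(h)]$ (well-defined in the regime considered since Lemma~\ref{lem-hcto0} ensures $\expec[\xi(h)]>0$ and small). For $\tau \in (3,4)$, Lemma~\ref{lem-truncmoment} with $a=2$ gives $\expec[K(K-1) \ind_{\{K \leq \ell\}}] \leq C \ell^{4-\tau}$, so the first term on the right has order $\expec[\xi(h)]^{\tau-2}$, and Lemma~\ref{lem-tail-rho} gives $\rho_{\geq \ell+1} \leq C \ell^{-(\tau-2)} = C \expec[\xi(h)]^{\tau-2}$, matching perfectly. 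For $\tau=4$ the truncated moment carries a logarithm, $\expec[K(K-1) \ind_{\{K \leq \ell\}}] \leq C \log \ell$, yielding the claimed $\log(1/\expec[\xi(h)])\, \expec[\xi(h)]^2$ behavior, while $\rho_{\geq \ell+1} \leq C \ell^{-2} = C \expec[\xi(h)]^2$ is absorbed.

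The main obstacle is not any delicate inequality but the bookkeeping needed to repackage the $B$-linear residuals into an error $B e_2$ satisfying~\eqref{eq-boundsei}, in particular $\lim_{B\searrow 0} e_2(\beta_c, B) = 0$. This uses the continuous phase transition from Lemma~\ref{lem-hcto0}, which guarantees that $\expec[\xi(h(\beta_c,B))] \to 0$ as $B \searrow 0$, killing the term $2\betahat^2\nu\expec[\xi(h)]$ at criticality; one must also confirm that the prefactor $(1-\betahat^2\nu)^{-1}$ yields a constant $C_2$ obeying~\eqref{eq-boundsCi}, which follows from continuity of $\beta \mapsto \betahat^2 \nu$ around $\beta_c$.
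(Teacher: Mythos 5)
Your proposal is correct and follows essentially the same route as the paper: bound $\xi(h)^2\leq\betahat^2h^2$ via the recursion and expand in the finite-second-moment case, and for $\tau\in(3,4]$ truncate at $K\leq\ell$ (using the truncated moment bounds of Lemma~\ref{lem-truncmoment}), bound the large-$K$ part by $\beta^2\rho_{\geq\ell}$ via Lemma~\ref{lem-tail-rho}, and choose $\ell\asymp1/\expec[\xi(h)]$, with Lemma~\ref{lem-hcto0} giving the required property of $e_2$. The only cosmetic difference is that you keep the factorial moment $\expec[K(K-1)\ind_{\{K\leq\ell\}}]$ where the paper uses $\expec[K^2\ind_{\{K\leq\ell\}}]$, which changes nothing.
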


\begin{proof}
We first treat the case $\expec[K^2]<\infty$. We use Lemma~\ref{lem-boundatanh} and the recursion in~\eqref{eq-recursion} to obtain
	\begin{align}
	\expec[\xi(h)^2] &\leq \betahat^2 \expec[h^2]
	= \betahat^2 	\expec\Big[\Big(B+\sum_{i=1}^K\xi(h_i)\Big)^2\Big]\nonumber\\
	&= \betahat^2\left(B^2+2B\nu\expec[\xi(h)]+\nu_2\expec[\xi(h)]^2+\nu\expec[\xi(h)^2]\right).
	\end{align}
Since $1-\betahat^2\nu>0$, because $\beta$ is sufficiently close to $\beta_c$ and $\betahat_c=1/\nu<1$, the lemma holds with
	\eq
	C_2 = \frac{\betahat^2 \nu_2}{1-\betahat^2\nu}, \qquad {\rm and } \qquad
	e_2 = \frac{B\betahat^2+2\betahat^2\nu\expec[\xi(h)]}{1-\betahat^2\nu}.
	\en
\rem{RvdH: Note that this inequality is correct with the RIGHT $C_2$!}
It is not hard to see that~\eqref{eq-boundsCi} holds. For $\ch{e_2}$ the first property of \eqref{eq-boundsei}
can also easily be seen. The second property in \eqref{eq-boundsei} follows from Lemma~\ref{lem-hcto0}.

If $\tau\leq4$, then $\expec[K^2]=\infty$ and the above does not work. To analyze this case,
we apply the recursion~\eqref{eq-recursion} and split the expectation over $K$ in small and large degrees:
	\eq\label{eq-exih2}
	\expec[\xi(h)^2] =
	\expec\Big[\xi\Big(B+\sum_{i=1}^K \xi(h_i)\Big)^2 \ind_{\{K\leq \ell\}}\Big]
	+\expec\Big[\xi\Big(B+\sum_{i=1}^K \xi(h_i)\Big)^2\ind_{\{K > \ell\}}\Big].
	\en
We use Lemma~\ref{lem-boundatanh} to bound the first term as follows:
	\begin{align}
	\expec\Big[\xi\Big(B+\sum_{i=1}^K \xi(h_i)\Big)^2 \ind_{\{K\leq \ell\}}\Big]
	&\leq \betahat^2 \expec\Big[\Big(B+\sum_{i=1}^K \xi(h_i)\Big)^2 \ind_{\{K\leq \ell\}}\Big] \\
	&\leq \betahat^2\left(B^2+2B\nu B\expec[\xi(h)]
	+ \expec[K^2\ind_{\{K\leq \ell\}}]\expec[\xi(h)]^2+\nu\expec[\xi(h)^2] \right).\nn
	\end{align}
For $\tau\in(3,4)$,
	\eq
	\expec[K^2\ind_{\{K\leq \ell\}}] \leq C_{2,\tau}\ell^{4-\tau},
	\en
by Lemma~\ref{lem-truncmoment}, while for $\tau=4$,
	\eq
	\expec[K^2\ind_{\{K\leq \ell\}}] \leq  C_{2,4}\log \ell.
	\en

To bound the second sum in~\eqref{eq-exih2}, note that $\xi(h)\leq \beta$. Hence,
	\eq
	\expec\Big(\xi\Big(B+\sum_{i=1}^K \xi(h_i)\Big)^2\ind_{\{K > \ell\}}\Big]
	\leq \beta^2 	\expec[\ind_{\{K > \ell\}}] \leq C_{0,\tau}\beta^2\ell^{2-\tau}.
	\en
The optimal bound (up to a constant) can be achieved by choosing $\ell$ such that $\ell^{4-\tau}\expec[\xi(h)]^2$ and $\ell^{2-\tau}$ are of the some order of magnitude. Hence, we choose $\ell=1/\expec[\xi(h)]$. Combining the two upper bounds then gives the desired result with
	\eq
	\ch{C_2=\frac{1}{1-\betahat^2\nu}\left(C_{2,\tau}\betahat^2+C_{0,\tau}\beta^2\right),}
	\en
where we \ch{have} also used that $\expec[\xi(h)]^2\leq\expec[\xi(h)]^2\log(1/\expec[\xi(h)])$, and
	\eq
	e_2 = \frac{B\betahat^2+2\betahat^2\nu\expec[\xi(h)]}{1-\betahat^2\nu}.
	\en
\end{proof}

We next derive upper \ch{bounds} on the third moment of $\xi(h)$:
\begin{lemma}[Bounds on third moment of $\xi(h)$]\label{lem-boundxih3}
Let $\beta\geq\beta_c$ and $B>0$. Then,
\eq
\expec[\xi(h)^3] \leq \left\{\begin{array}{ll} C_3 \expec[\xi(h)]^3 + B e_3 &\when \expec[K^3]<\infty, \\ \\
                                               C_3 \expec[\xi(h)]^3\log\left(1/\expec[\xi(h)]\right) + B e_3 &\when \tau=5, \\ \\
                                               C_3 \expec[\xi(h)]^{\tau-2} + B e_3 &\when \tau\in(3,5).\end{array} \right.
\en
\end{lemma}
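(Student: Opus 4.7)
The plan is to mimic the proof of Lemma~\ref{lem-boundxih2} one degree up. Starting from $\xi(h)\leq \betahat h$ (Lemma~\ref{lem-boundatanh}) and the recursion \eqref{eq-recursion}, I would compute
	\eq
	\expec[\xi(h)^3]\leq \betahat^3\expec\Big[\Big(B+\sum_{i=1}^K\xi(h_i)\Big)^3\Big]
	\en
and, conditioning on $K$ and using that the $h_i$'s are i.i.d.\ copies of $h$, expand
	\eq
	\expec\Big[\Big(\sum_{i=1}^K\xi(h_i)\Big)^3\Big]
	= \nu\expec[\xi(h)^3]+3\nu_2\expec[\xi(h)^2]\expec[\xi(h)]+\nu_3\expec[\xi(h)]^3,
	\en
together with the analogous expansions of the $B$, $B^2$ and $B^3$ cross terms.

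For $\expec[K^3]<\infty$ the three factorial moments $\nu,\nu_2,\nu_3$ are finite. Since $\betahat^3\nu<1$ near $\beta_c$ (as $\betahat_c\nu=1$), I can move $\betahat^3\nu\,\expec[\xi(h)^3]$ to the left and solve. The remaining cubic-in-$\expec[\xi(h)]$ terms are controlled via Lemma~\ref{lem-boundxih2}, which gives $\expec[\xi(h)^2]\expec[\xi(h)]\leq C_2\expec[\xi(h)]^3+Be_2\expec[\xi(h)]$, so the main contribution takes the announced form with
	\eq
	C_3=\frac{\betahat^3(3\nu_2C_2+\nu_3)}{1-\betahat^3\nu},
	\en
and all remaining $B,B^2,B^3$ contributions, each carrying at least one factor of $B$ and factors going to $0$ as $B\searrow 0$ by Lemma~\ref{lem-hcto0}, collapse into a single error $Be_3$ satisfying \eqref{eq-boundsei}.

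When $\expec[K^3]=\infty$, i.e.\ $\tau\in(3,5]$, I would split the outer expectation as in \eqref{eq-exih2}:
	\eq
	\expec[\xi(h)^3]=\expec\Big[\xi\Big(B+\sum_{i=1}^K\xi(h_i)\Big)^3\ind_{\{K\leq \ell\}}\Big]
	+\expec\Big[\xi\Big(B+\sum_{i=1}^K\xi(h_i)\Big)^3\ind_{\{K> \ell\}}\Big].
	\en
On $\{K>\ell\}$, I would use the uniform bound $\xi(h)\leq \beta$ together with $\prob(K>\ell)\leq C_\rho \ell^{-(\tau-2)}$ from Lemma~\ref{lem-tail-rho}, giving a contribution of order $\ell^{-(\tau-2)}$. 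On $\{K\leq \ell\}$, I would use $\xi\leq \betahat\,h$, expand the cube as above but with truncated factorial moments, and apply Lemma~\ref{lem-truncmoment} to bound
$\expec[K^3\ind_{\{K\leq\ell\}}]\leq C_{3,\tau}\ell^{5-\tau}$ for $\tau\in(3,5)$ or $C_{3,5}\log\ell$ for $\tau=5$, and similarly for $\expec[K^2\ind_{\{K\leq\ell\}}]$ when $\tau\in(3,4]$. The factor $\expec[\xi(h)^2]$ is then substituted using the appropriate regime of Lemma~\ref{lem-boundxih2}. Finally I would optimize in $\ell$: the choice $\ell=1/\expec[\xi(h)]$ balances $\ell^{5-\tau}\expec[\xi(h)]^3$ with $\ell^{-(\tau-2)}$, producing the main term $\expec[\xi(h)]^{\tau-2}$ for $\tau\in(3,5)$ and $\expec[\xi(h)]^3\log(1/\expec[\xi(h)])$ at $\tau=5$. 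All lower-order pieces and the $B$-dependent remainders again absorb into $Be_3$ with the required properties in \eqref{eq-boundsCi}--\eqref{eq-boundsei}.

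The main obstacle is bookkeeping: when $\tau\in(3,4]$, Lemma~\ref{lem-boundxih2} itself contributes a non-trivial $\expec[\xi(h)]^{\tau-2}$ (or a logarithm at $\tau=4$) to the $\expec[\xi(h)^2]\expec[\xi(h)]$ cross term, and one must verify that after the optimal choice $\ell=1/\expec[\xi(h)]$, this cross term still yields an exponent at least $\tau-2$ (indeed, $2\tau-5>\tau-2$ for $\tau>3$, so the cross term is subdominant) and hence does not upset the stated bound. Once this check is done, the constant $C_3$ and the error $e_3$ are assembled exactly as in Lemma~\ref{lem-boundxih2}.
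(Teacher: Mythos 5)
Your proposal is correct and follows essentially the same route as the paper: bound $\xi(h)^3\leq\betahat^3(B+\sum_{i=1}^K\xi(h_i))^3$ and solve using $1-\betahat^3\nu>0$ when $\expec[K^3]<\infty$ (yielding the same $C_3$), and for $\tau\in(3,5]$ split into $\{K\leq\ell\}$ and $\{K>\ell\}$ with $\xi\leq\beta$ on the tail, truncated moments from Lemma~\ref{lem-truncmoment}, Lemma~\ref{lem-boundxih2} for $\expec[\xi(h)^2]$, and the choice $\ell=1/\expec[\xi(h)]$. The only difference is presentational: the paper fixes $\ell=\lfloor 1/\expec[\xi(h)]\rfloor$ from the outset rather than optimizing at the end, and your check that the $\expec[K(K-1)\ind_{\{K\leq\ell\}}]\expec[\xi(h)^2]\expec[\xi(h)]$ cross term is subdominant ($2\tau-5>\tau-2$) is exactly the bookkeeping the paper carries out implicitly.
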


\begin{proof}
For $\expec[K^3]<\infty$ we bound, in a similar way as in Lemma~\ref{lem-boundxih2},
	\begin{align}
	\expec[\xi(h)^3] \leq \betahat^3&\Bigg(B^3 + 3 B^2\nu\expec[\xi(h)]
	+3B\nu_2\expec[\xi(h)]^2+3B\nu\expec[\xi(h)^2]\\
	&+\nu_3\expec[\xi(h)]^3+3\nu_2\expec[\xi(h)]\expec[\xi(h)^2]
	+\betahat^3\nu\expec[\xi(h)^3]\Bigg).\nn
	\end{align}
Using~\eqref{eq-boundxih2}, we indeed get the bound
	\eq
	\expec[\xi(h)^3] \leq C_3 \expec[\xi(h)]^3 + B e_3,
	\en
where
	\eq
	C_3 = \frac{\betahat^3}{1-\betahat^3 \nu} \left(\nu_3 + 3 \nu_2 C_2\right),
	\en
\rem{RvdH: Note that this is the RIGHT $C_3$!}
and
	\eq
	e_3 = \frac{\betahat^3}{1-\betahat^3\nu}\left\{B^2
	+3B\nu e_2+3\left(B\nu+\nu_2e_2\right)\expec[\xi(h)]
	+3\left(\nu_2+\nu C_2\right)\expec[\xi(h)]^2\right\}.
	\en

For $\tau\in(3,5]$, we use the recursion~\eqref{eq-recursion}, make the expectation over $K$ explicit and split in small and large values of $K$ to obtain
	\eq\label{eq-exih3}
	\ch{\expec[\xi(h)^3] =
	\expec\Big[\xi\Big(B+\sum_{i=1}^K\xi(h_i)\Big)^3\indic{K\leq \lfloor1/\expec[\xi(h)]\rfloor}\Big] +
	\expec\Big[\xi\Big(B+\sum_{i=1}^K\xi(h_i)\Big)^3\indic{K>\lfloor1/\expec[\xi(h)]\rfloor}\Big].}
	\en
We bound the first sum from above by
	\ch{\begin{align}
	&\betahat^3 \expec\Big[\Big(B+\sum_{i=1}^k\xi(h_i)\Big)^3\indic{K\leq \lfloor1/\expec[\xi(h)]\rfloor}\Big] \nn\\
	&\qquad= \betahat^3 \Big(B^3 + 3B^2 \expec[K \indic{K\leq \lfloor1/\expec[\xi(h)]\rfloor}]\expec[\xi(h)]+
	3B\expec[K(K-1)\indic{K\leq \lfloor1/\expec[\xi(h)]\rfloor}]\expec[\xi(h)^2]\nn\\
	&\qquad\qquad+3B\expec[K \indic{K\leq \lfloor1/\expec[\xi(h)]\rfloor}]
	\expec[\xi(h)^2]+\expec[K(K-1)(K-2) \indic{K\leq \lfloor1/\expec[\xi(h)]\rfloor}]
	\expec[\xi(h)]^3
	\nn\\
	&\qquad\qquad+3\expec[K(K-1)\indic{K\leq \lfloor1/\expec[\xi(h)]\rfloor}]\expec[\xi(h)]\expec[\xi(h)^2]+\expec[K\indic{K\leq \lfloor1/\expec[\xi(h)]\rfloor}]\expec[\xi(h)^3]\Big).\nn
\end{align}}
By Lemma~\ref{lem-truncmoment}, for $\tau\in(3,5)$,
	\ch{\eq
	\expec[K^3\indic{K\leq \lfloor1/\expec[\xi(h)]\rfloor}]\leq C_{3,\tau}\expec[\xi(h)]^{\tau-5},
	\en}
while, for $\tau=5$,
	\ch{\eq
	\expec[K^3\indic{K\leq \lfloor1/\expec[\xi(h)]\rfloor}]
	\leq C_{3,5}\left(1+\log\left(1/\expec[\xi(h)]\right)\right).
	\en}
Similarly, by Lemma~\ref{lem-truncmoment}, for $\tau\in(3,4)$,
	\ch{\eq
	\expec[K^2\indic{K\leq \lfloor1/\expec[\xi(h)]\rfloor}]\leq C_{2,\tau}\expec[\xi(h)]^{\tau-4},
	\en}
while, for $\tau=4$,
	\ch{\eq
	\expec[K^2\indic{K\leq \lfloor1/\expec[\xi(h)]\rfloor}]
	\leq C_{2,4}\left(1+\log\left(1/\expec[\xi(h)]\right)\right).
	\en}

For the other terms we can replace the upper bound in the sum by infinity and use the upper bound on $\expec[\xi(h)^2]$ of Lemma~\ref{lem-boundxih2}. For the second sum in~\eqref{eq-exih3} we bound $\xi(x)\leq\beta$, so that this sum is bounded from above by $C_{0,\tau}\expec[\xi(h)]^{\tau-2}$.
Combining these bounds gives the desired result.
\end{proof}

\subsection{Bounds on first moment of $\xi(h)$ \label{sec-boundsExi}}
\begin{proposition}[Upper bound on first moment of $\xi(h)$]
\label{prop-UpperExi}
Let $\beta\geq\beta_c$ and $B>0$. Then, there exists a $C_1>0$ such that
	\eq\label{eq-UpperExi}
	\expec[\xi(h)] \leq \beta B + \betahat \nu \expec[\xi(h)] - C_1 \expec[\xi(h)]^{\boldsymbol{\delta}},
	\en
where
	\eq
	\boldsymbol{\delta}
	=\left\{\begin{array}{ll} 3 &\when \expec[K^3]<\infty,\\ \\ \tau-2 &\when \tau\in(3,5].
	\end{array}\right.
	\en
For $\tau=5$,
	\eq\label{eq-UpperExi-tau5}
	\expec[\xi(h)] \leq \beta B + \betahat \nu \expec[\xi(h)]
	- C_1 \expec[\xi(h)]^{3}\log\left(1/\expec[\xi(h)]\right).
	\en
\end{proposition}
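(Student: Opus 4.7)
My plan is to reduce the bound on $\eta:=\expec[\xi(h)]$ to an estimate on a truncated third moment of $K$, through three ingredients: a Jensen step that removes the internal randomness of the $(h_i)$'s, a third-order Taylor expansion of $\xi$ at zero, and the tail estimates of Lemma~\ref{lem-tail-rho} and Lemma~\ref{lem-truncmoment}.

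First, I would condition on $K$ in the fixed-point identity \eqref{eq-recursion} and apply Jensen's inequality to the i.i.d.\ sample $(\xi(h_i))_i$. Since $\xi$ is concave on $[0,\infty)$ by \eqref{eq-secondderxi}, this gives
\begin{equation}
\expec[\xi(h)]=\expec\Big[\xi\Big(B+\sum_{i=1}^K\xi(h_i)\Big)\Big]\leq \expec[\xi(B+K\eta)].
\end{equation}
Next, the Taylor expansion of $\xi$ at $0$ with $\xi(0)=0$, $\xi'(0)=\betahat$, $\xi''(0)=0$ and $\xi'''(0)=-2\betahat(1-\betahat^2)<0$ (see \eqref{eq-thirdderxi}), together with the continuity of $\xi'''$, yields constants $A>0$ and $c>0$, bounded away from $0$ uniformly for $\beta$ in a neighborhood of $\beta_c$, such that $\xi(y)\leq\betahat y-cy^3$ for all $y\in[0,A]$, while $\xi(y)\leq\betahat y$ for $y>A$. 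Substituting $y=B+K\eta$ and using $(B+K\eta)^3\geq K^3\eta^3$ together with $\betahat\leq\beta$, I obtain
\begin{equation}
\eta\leq \beta B+\betahat\nu\eta-c\,\eta^3\,\expec\big[K^3\ind_{\{K\leq L_\eta\}}\big],\qquad L_\eta:=\lfloor(A-B)/\eta\rfloor.
\end{equation}
By Lemma~\ref{lem-hcto0}, in both limits of interest ($B\searrow 0$ at fixed $\beta\geq\beta_c$ and $\beta\searrow\beta_c$ at $B=0^+$) we have $\eta\to 0$, so $L_\eta\to\infty$.

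The proof would then conclude by a case analysis on the tail of $K$. If $\expec[K^3]<\infty$, monotone convergence gives $\expec[K^3\ind_{\{K\leq L_\eta\}}]\to\expec[K^3]$, and \eqref{eq-UpperExi} follows with $\boldsymbol{\delta}=3$ and $C_1=\tfrac12 c\,\expec[K^3]$ once $\eta$ is sufficiently small. For $\tau\in(3,5)$ the tail bound $\rho_{\geq k}\geq c_\rho k^{-(\tau-2)}$ of Lemma~\ref{lem-tail-rho} implies, by a dyadic-scale argument (pick $K_0$ so that $c_\rho K_0^{\tau-2}\geq 2C_\rho$, so that $\prob[L/K_0\leq K<L]\geq \tfrac12 c_\rho K_0^{\tau-2}L^{-(\tau-2)}$, and bound $\expec[K^3\ind_{\{K\leq L\}}]\geq (L/K_0)^3\prob[L/K_0\leq K<L]$), the matching lower bound $\expec[K^3\ind_{\{K\leq L\}}]\geq c'L^{5-\tau}$; then $c\eta^3\cdot c'L_\eta^{5-\tau}\asymp\eta^{\tau-2}$, yielding \eqref{eq-UpperExi} with $\boldsymbol{\delta}=\tau-2$. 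For $\tau=5$ the logarithmic lower bound \eqref{log-correct-lb-tau5} of Lemma~\ref{lem-truncmoment} gives $\expec[K^3\ind_{\{K\leq L_\eta\}}]\gtrsim\log(1/\eta)$, producing \eqref{eq-UpperExi-tau5}.

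The conceptually clean Jensen and Taylor steps leave the real work in the last step, specifically for $\tau\in(3,5)$: a lower bound of matching order on the truncated third moment of $K$ is needed, which is not part of Lemma~\ref{lem-truncmoment} but follows from the dyadic argument above. A secondary check is the uniform-in-$\beta$ control of $C_1$ demanded by \eqref{eq-boundsCi}, which is immediate because $\betahat_c=1/\nu\in(0,1)$ keeps $|\xi'''(0)|$ and the threshold $A$ bounded away from zero as $\beta\to\beta_c$.
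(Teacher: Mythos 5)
Your proposal is correct, and for two of the three cases it follows the paper's own route: the paper also removes the randomness of the $(h_i)$ by concavity/Jensen (there phrased as $T_2\leq 0$ in the decomposition \eqref{eq-split}), and for $\expec[K^3]<\infty$ it bounds the remaining term $T_1$ exactly as you do, via the third-order Taylor bound on a bounded region (the event $B+K\expec[\xi(h)]\leq \atanh\tfrac12$, playing the role of your threshold $A$) and the convergence of the truncated third moment to $\expec[K^3]$; for $\tau=5$ it likewise restricts to $K\leq \vep/\expec[\xi(h)]$ and invokes the logarithmic lower bound \eqref{log-correct-lb-tau5}. Where you genuinely diverge is $\tau\in(3,5)$: the paper there writes $T_1=\sum_k \rho_k f(k)$, uses partial summation \eqref{partial-summation} with the lower tail bound of Lemma~\ref{lem-tail-rho}, and identifies the resulting Riemann sum, via dominated convergence, with a negative integral $-\int_a^b y^{-(\tau-1)}(\xi(B+y)-\betahat(B+y))\,\dint y$, which produces the $\expec[\xi(h)]^{\tau-2}$ term; you instead keep the single unified inequality $\expec[\xi(h)]\leq \beta B+\betahat\nu\expec[\xi(h)]-c\,\expec[\xi(h)]^3\,\expec[K^3\ind_{\{K\leq L\}}]$ and supply the missing ingredient, a lower bound $\expec[K^3\ind_{\{K\leq L\}}]\geq c'L^{5-\tau}$, directly from the two-sided tail estimates (your dyadic-window argument correctly notes this is not contained in Lemma~\ref{lem-truncmoment}, whose bounds in that range are upper bounds). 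This is a sound and arguably more elementary treatment of the $\tau\in(3,5)$ case: it unifies all three regimes under one truncated-moment inequality and avoids the partial-summation/dominated-convergence step, at the cost of slightly less transparent constants (and you should allow for integer-rounding in the choice of $K_0$, e.g.\ requiring $c_\rho (K_0/2)^{\tau-2}\geq 2C_\rho$, which changes nothing essential). The paper's integral representation, by contrast, exploits the full shape of $y\mapsto\xi(B+y)-\betahat(B+y)$ rather than only its cubic behaviour near $0$, but for the purpose of \eqref{eq-UpperExi} both give the same exponent, and your uniform-in-$\beta$ control of the constants near $\beta_c$ matches what the standing assumptions \eqref{eq-boundsCi} require.
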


\begin{proof}
We first use recursion~\eqref{eq-recursion} and rewrite it as
	\eq\label{eq-split}
	\expec[\xi(h)] = \expec\Big[\xi\Big(B+\sum_{i=1}^K \xi(h_i)\Big)\Big]
	= \betahat B + \betahat \nu \expec[\xi(h)] + T_1+T_2,
	\en
where
	\eq
	T_1= \expec\Big[\xi\Big(B+K\expec[\xi(h)]\Big)-\betahat\left(B + K \expec[\xi(h)]\right)\Big],
	\en
and
	\eq
	T_2 = \expec\Big[\xi\Big(B+\sum_{i=1}^K \xi(h_i)\Big)-\xi\left(B+K\expec[\xi(h)]\right)\Big].
	\en
Here, $T_1$ can be seen as the error of a first order Taylor series approximation of $\xi\left(B+K\expec[\xi(h)]\right)$ around $0$, whereas $T_2$ is the error made by replacing $\xi(h_i)$ by its expected value in the sum. By concavity of $x\mapsto \xi(x)$, both random variables in the expectation of $T_1$ and $T_2$ are non-positive. In particular, $T_2\leq 0$, which is enough for our purposes. We next bound $T_1$ in the cases where $\expec[K^3]<\infty$, $\tau\in(3,5)$ and $\tau=5$ separately.

\paragraph{Bound on $T_1$ when $\expec[K^3]<\infty$.}
To bound $T_1$ for $\expec[K^3]<\infty$ we use that, a.s.,
	\eq
	\xi\left(B+K\expec[\xi(h)]\right)-\betahat\left(B + K\expec[\xi(h)]\right)\leq0,
	\en
which follows from Lemma~\ref{lem-boundatanh}. Hence,
	\eq
	T_1 \leq \expec\left[\left(\xi\left(B+K\expec[\xi(h)]\right)
	-\betahat\left(B + K\expec[\xi(h)]\right)\right)
	\ind_{\{B + K \expec[\xi(h)]\leq \atanh\frac12\}}\right].
	\en
Since $\xi''(0)=0$, it follows from Taylor's theorem that, a.s.,
	\eq
	\xi\left(B+K\expec[\xi(h)]\right)-\betahat\left(B + K\expec[\xi(h)]\right)
	=\frac{\xi'''(\zeta)}{6}\left(B + K\expec[\xi(h)]\right)^3,
	\en
for some $\zeta\in(0,B + K\expec[\xi(h)])$. If $B + K \expec[\xi(h)]\leq \atanh\frac12$, then
	\eq
	\xi'''(\zeta)= -\frac{2 \betahat (1-\betahat^2) (1-\tanh^2 \zeta)}
	{(1-\betahat^2\tanh^2\zeta)^3}\left(1-3(1-\betahat^2)\tanh^2\zeta-\betahat^2\tanh^4\zeta\right)
	\leq-\frac{9}{32}\betahat(1-\betahat^2).
	\en
Hence,
	\begin{align}
	T_1&\leq-\frac{3}{64}\betahat(1-\betahat^2)\expec\left[\left(B + K\expec[\xi(h)]\right)^3
	\ind_{\{B + K \expec[\xi(h)]\leq \atanh\frac12\}}\right]\nn\\
	&\leq-\frac{3}{64}\betahat(1-\betahat^2)\expec[K^3\ind_{\{K \expec[\xi(h)]
	\leq \atanh\frac12-B\}}]\expec[\xi(h)]^3.
	\end{align}

\paragraph{Bound on $T_1$ \ch{when} $\tau\in(3,5]$.}
For $\tau\in(3,5]$, we make the expectation over $K$ explicit:
	\eq
	T_1=\sum_{k=0}^\infty \rho_k\left(\xi\left(B+k\expec[\xi(h)]\right)
	-\betahat\left(B + k\expec[\xi(h)]\right)\right),
	\en
where it should be noted that all terms in this sum are negative
because of Lemma~\ref{lem-boundatanh}. Define $f(k)=\xi\left(B+k\expec[\xi(h)]\right)
-\betahat\left(B + k\expec[\xi(h)]\right)$ and note that $f(k)$ is non-increasing.
We use \eqref{partial-summation} and Lemma \ref{lem-tail-rho} to rewrite
	\eq
	T_1=\sum_{k=0}^\infty f(k)\rho_k
	=\ch{f(0)+}\sum_{k\geq 1} [f(k)-f(k-1)]\rho_{\geq k}
	\leq \ch{f(0)+}c_{\rho}\sum_{k\geq 1} [f(k)-f(k-1)] (k+1)^{-(\tau-2)}.
	\en
Then, use \eqref{partial-summation}  in reverse to rewrite this as
	\eq
	T_1
	\leq \ch{f(0)+}c_{\rho}\sum_{k\geq 0} f(k) [k^{-(\tau-2)}-(k+1)^{-(\tau-2)}]
	\leq \ch{f(0)(1-c_{\rho}\sum_{k\geq 1} k^{-\tau})+}(\tau-1)c_{\rho} \sum_{k\geq 0} f(k) (k+1)^{-(\tau-1)}.
	\en
Hence, with $\ch{e=f(0)(1-c_{\rho}\sum_{k\geq 1} k^{-\tau})/B}$,
	\begin{align}
	T_1&\leq \ch{eB+} (\tau-1)c_{\rho} \left(\expec[\xi(h)]\right)^{\tau-1}
	\sum_{k=0}^{\infty} ((k+1)\expec[\xi(h)])^{-(\tau-1)}
	\left(\xi\left(B+k\expec[\xi(h)]\right)-\betahat\left(B + k\expec[\xi(h)]\right)\right)\nn\\
	&\leq \ch{eB+} (\tau-1)c_{\rho} \left(\expec[\xi(h)]\right)^{\tau-1}
	 \sum_{k=a/\expec[\xi(h)]}^{b/\expec[\xi(h)]} (k\expec[\xi(h)])^{-(\tau-1)}
	\left(\xi\left(B+k\expec[\xi(h)]\right)-\betahat\left(B + k\expec[\xi(h)]\right)\right),
\end{align}
where we choose $a$ and $b$ such that $0<a<b<\infty$. We use dominated convergence on the above sum.
The summands are  uniformly bounded, and $\expec[\xi(h)]\rightarrow0$ for both limits of interest.
Further, when $k\expec[\xi(h)]=y$, the summand converges pointwise to
$y^{-(\tau-1)} \left(\xi\left(B+y\right)-\betahat\left(B + y\right)\right)$.
Hence, we can write the sum above as
	\eq
	\expec[\xi(h)]^{-1} \left(\int_{a}^{b} y^{-(\tau-1)}
	\left(\xi\left(B+y\right)-\betahat\left(B + y\right)\right)\dint y +o(1)\right),
	\en
where $o(1)$ is a function tending to zero for both limits of interest \cite[216 A]{Ito93}. The integrand is uniformly bounded for $y\in[a,b]$ and hence we can bound the integral from above by a (negative) constant $-I$ for $B$ sufficiently small and $\beta$ sufficiently close to $\beta_c$. Hence,
	\eq
	\expec[\xi(h)] \leq \betahat B + \betahat \nu \expec[\xi(h)]
	- (\tau-1)c_{\rho}  I\expec[\xi(h)]^{\tau-2}.
	\en

%

\paragraph{Logarithmic corrections in the bound for $\tau=5$.}
We complete the proof by identifying the logarithmic correction for $\tau=5$. Since the
random variable in the expectation in $T_1$ is non-positive, we can bound
	\eq
	T_1\leq
	\expec\left[\xi\left(B+K\expec[\xi(h)]\right)-\betahat\left(B + K \expec[\xi(h)]\right)
	\indic{K\leq \vep /\expec[\xi(h)]}\right].
	\en
Taylor expansion $h\mapsto \xi(h)$ to third order, using that $\xi(0)=\xi''(0)=0$,
while the linear term cancels, leads to
	\eq
	T_1\leq
	\expec\left[\frac{\xi'''(\zeta)}{6} \left(B+K\expec[\xi(h)]\right)^3
	\indic{K\leq \vep /\expec[\xi(h)]}\right],
	\en
for some $\zeta\in (0,K\expec[\xi(h)])$. On the event that $K\leq \vep /\expec[\xi(h)]$,
we thus have that $\zeta\in (0,\vep)$, and $\xi'''(\zeta)\geq \ch{c_{\vep}\equiv\inf_{x\in (0,\vep)} \xi'''(x)}$
when $\vep$ is sufficiently small. Thus,
	\begin{align}
	T_1&\leq
	\frac{\ch{c_{\vep}}}{6}\expec\left[\left(B+K\expec[\xi(h)]\right)^3
	\indic{K\leq \vep /\expec[\xi(h)]}\right]\\
	&\leq \frac{\ch{c_{\vep}}}{6} \expec[\xi(h)]^3 \expec\left[K(K-1)(K-2)\indic{K\leq \vep/\expec[\xi(h)]}\right].\nn
	\end{align}
When $\tau=5$, by Lemma \ref{lem-truncmoment}, $\expec\left[K(K-1)(K-2)\indic{K\leq \ell}\right]\geq c_{3,5}\log{\ell}$,
which completes the proof.
\rem{For $\expec[K^3]<\infty$, the above argument yields the correct upper bound
on $T_1$ when $\vep\searrow 0$!  In turn, the correct upper bound on
$T_1$ will yield the upper lower bound on $\expec[\xi(h)]$, which is the wrong bound...}
\end{proof}

\begin{proposition}[Lower bound on first moment of $\xi(h)$]\label{prop-LowerExi} Let $\beta\geq\beta_c$ and $B>0$. Then, there exists a constant $C_2>0$ such that
	\eq
	\label{eq-LowerExi}
	\expec[\xi(h)] \geq
	\beta B + \betahat \nu \expec[\xi(h)] - c_1 \expec[\xi(h)]^{\boldsymbol{\delta}} - B e_1,
	\en
where
	\eq
	\boldsymbol{\delta}=\left\{\begin{array}{ll} 3 &\when \expec[K^3]<\infty,\\ \\ \tau-2 &
	\when\tau\in(3,5).\end{array}\right.
	\en
For $\tau=5$,
	\eq
	\label{eq-LowerExi-tau5}
	\expec[\xi(h)] \geq \beta B + \betahat \nu \expec[\xi(h)]
	- C_2 \expec[\xi(h)]^3 \log(1/\expec[\xi(h)]) - B e_1.
	\en
\end{proposition}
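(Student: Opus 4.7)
The plan is to mirror the proof of Proposition~\ref{prop-UpperExi} but with the inequalities reversed. Starting from the exact identity
\[
\expec[\xi(h)] = \betahat B + \betahat\nu\expec[\xi(h)] + T_1 + T_2
\]
obtained in~\eqref{eq-split}, we now need \emph{lower} bounds on both $T_1$ and $T_2$, since the upper bound exploited only that they are non-positive. The discrepancy between $\betahat B$ and $\beta B$ is a constant multiple of $B$ and is absorbed into the $Be_1$ error term.

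For $T_1$, Taylor-expand $\xi$ around $0$ to third order, using $\xi(0)=\xi''(0)=0$ and $\xi'(0)=\betahat$, together with the lower bound $\xi'''(\zeta)\geq -2\betahat/(1-\betahat^2)$ from Lemma~\ref{lem-boundatanh}:
\[
T_1 \geq -\frac{\betahat}{3(1-\betahat^2)}\,\expec\bigl[(B+K\expec[\xi(h)])^3\bigr].
\]
When $\expec[K^3]<\infty$, expanding the cube produces the leading term $\nu_3\expec[\xi(h)]^3$ plus terms of the form $B\cdot O(\expec[\xi(h)]^j)$ that are absorbed into $Be_1$. For $\tau\in(3,5]$, truncate $K$ at $\ell=1/\expec[\xi(h)]$: on $\{K\leq\ell\}$ the cubic Taylor bound combined with Lemma~\ref{lem-truncmoment} (giving $\expec[K^3\ind_{\{K\leq\ell\}}]\leq C\ell^{5-\tau}$, or $C\log\ell$ for $\tau=5$) yields the correct order, while on $\{K>\ell\}$ the estimate $|\xi(y)-\betahat y|\leq\beta+\betahat y$ combined with Lemmas~\ref{lem-tail-rho}--\ref{lem-truncmoment} shows the tail contributes $O(\expec[\xi(h)]^{\tau-2})$.

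For $T_2 = \expec[\xi(x+Y)-\xi(x)]$ with $x=B+K\expec[\xi(h)]$ and $Y=\sum_{i=1}^K(\xi(h_i)-\expec[\xi(h)])$, expand to second order,
\[
T_2 = \expec[\xi'(x)Y] + \tfrac12 \expec[\xi''(\zeta)Y^2].
\]
Conditioning on $K$, the first-order term vanishes since $\expec[Y\mid K]=0$ and $x$ is $K$-measurable. For the quadratic piece, exploit $\xi''(0)=0$: from~\eqref{eq-secondderxi}, $|\xi''(\zeta)|\leq C\zeta\leq C(x+|Y|)$ near the origin (with a harmless uniform bound for $\zeta$ bounded away from $0$), giving
\[
T_2 \geq -C\,\expec[xY^2] - C\,\expec[|Y|^3].
\]
Using $\expec[Y^2\mid K]=K\Var\xi(h)\leq K\expec[\xi(h)^2]$ and Lemma~\ref{lem-boundxih2}, the first term is $O(\expec[\xi(h)]^{\boldsymbol{\delta}})+Be$; a Rosenthal-type bound $\expec[|Y|^3\mid K]\leq CK^{3/2}(\Var\xi(h))^{3/2}$ combined again with Lemma~\ref{lem-boundxih2} controls the second. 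For $\tau\in(3,5]$, truncation of $K$ at $1/\expec[\xi(h)]$ together with $\xi\leq\beta$ and Lemma~\ref{lem-tail-rho} handles the large-$K$ tail, mirroring the strategy of Lemmas~\ref{lem-boundxih2}--\ref{lem-boundxih3}.

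The main obstacle is the lower bound on $T_2$: a naive uniform estimate $|\xi''|\leq C_\infty$ would yield only $T_2\geq -C\Var\xi(h)=O(\expec[\xi(h)]^2)$, one power too weak. Recovering the missing factor $\expec[\xi(h)]$ requires exploiting the vanishing of $\xi''$ at the origin, combined with a careful small-$K$/large-$K$ split that parallels Lemmas~\ref{lem-boundxih2}--\ref{lem-boundxih3}. At the boundary $\tau=5$, the logarithmic correction in~\eqref{eq-LowerExi-tau5} emerges from the $\log\ell$ growth of $\expec[K(K-1)(K-2)\ind_{\{K\leq\ell\}}]$ established in~\eqref{log-correct-lb-tau5}, exactly as the upper bound's $\tau=5$ correction arose from the same moment estimate in Proposition~\ref{prop-UpperExi}.
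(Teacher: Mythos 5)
Your plan is essentially the paper's own proof: the same split \eqref{eq-split}, the same cubic Taylor/truncation treatment of $T_1$ via Lemmas~\ref{lem-boundatanh}, \ref{lem-tail-rho} and \ref{lem-truncmoment}, and for $T_2$ the same two ideas (the first-order term vanishes after conditioning on $K$, and $\xi''(0)=0$ gives $|\xi''(\zeta)|\leq \frac{2\betahat}{1-\betahat^2}\zeta$, so the quadratic remainder picks up an extra factor of order $B+K\expec[\xi(h)]+|Y|$), followed by the small-/large-$K$ split and the moment bounds of Lemmas~\ref{lem-boundxih2}--\ref{lem-boundxih3}.

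There is, however, one step that is false as written: the ``Rosenthal-type bound'' $\expec[|Y|^3\mid K]\leq CK^{3/2}(\Var\,\xi(h))^{3/2}$. Rosenthal's inequality for $p=3$ reads $\expec[|Y|^3\mid K]\leq C\big(K\,\expec[|\xi(h)-\expec[\xi(h)]|^3]+(K\Var\,\xi(h))^{3/2}\big)$, and the term you dropped is not negligible here: for $\tau\in(3,4)$ Lemmas~\ref{lem-boundxih2}--\ref{lem-boundxih3} give $\expec[\xi(h)^3]\asymp\expec[\xi(h)]^{\tau-2}$ while $(\expec[\xi(h)^2])^{3/2}\asymp\expec[\xi(h)]^{3(\tau-2)/2}$, so the omitted first term is in fact the \emph{dominant} contribution, and controlling it requires Lemma~\ref{lem-boundxih3}, not Lemma~\ref{lem-boundxih2} as you state. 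The repair is immediate and stays within your framework (keep both Rosenthal terms, bound $\expec[|\xi(h)-\expec[\xi(h)]|^3]\leq C\expec[\xi(h)^3]$ and apply Lemma~\ref{lem-boundxih3}); the paper sidesteps the issue more simply by computing $\expec[Y^3\mid K]=K\,\expec[(\xi(h)-\expec[\xi(h)])^3]$ exactly (third central moments of i.i.d.\ sums are additive) and bounding it by $K\expec[\xi(h)^3]$, which is both sharper (linear in $K$) and avoids absolute central moments. Two further small points: for this lower bound on $\expec[\xi(h)]$ at $\tau=5$ you need the \emph{upper} bound $\expec[K^3\indic{K\leq\ell}]\leq C_{3,5}\log\ell$ from Lemma~\ref{lem-truncmoment}, not the lower bound \eqref{log-correct-lb-tau5} (that one is what Proposition~\ref{prop-UpperExi} uses); and when handling the cross terms coming from the $B$-dependent part of $\Var\,\xi(h)$ in the regime $\tau\in(3,7/2)$, truncated moments such as $\expec[K^{3/2}\indic{K\leq \ell}]$ with $\ell\asymp 1/\expec[\xi(h)]$ produce negative powers of $\expec[\xi(h)]$, which you should tame by the elementary bound $\expec[\xi(h)]\geq\xi(B)\geq cB$ before declaring them of the form $Be$.
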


\begin{proof}
We again use the split in~\eqref{eq-split} and we bound $T_1$ and $T_2$.

\paragraph{The lower bound on $T_1$.} For $\expec[K^3]<\infty$, we use the lower bound of Lemma~\ref{lem-boundatanh} to get
	\eq\label{eq-t1k3finite}
	T_1 \geq - \frac{\betahat}{3(1-\betahat^2)} \expec\left[(B+K\expec[\xi(h)])^3\right].
	\en
By expanding, this can be rewritten as
	\eq
	T_1 \geq - \frac{\betahat}{3(1-\betahat^2)} \expec[K^3]\expec[\xi(h)]^3-B e_4.
	\en

For $\tau\in(3,5]$, we first split $T_1$ in a small $K$ and a large $K$ part. For this, write
	\eq
	t_1(k) = \xi\left(B+k\expec[\xi(h)]\right)-\betahat\left(B + k\expec[\xi(h)]\right).
	\en
Then,
	\eq
	T_1 = \expec[t_1(K)]=\expec\left[t_1(K) \ind_{\{K \leq \varepsilon/\expec[\xi(h)]\}}\right]
	+\expec\left[t_1(K) \ind_{\{K > \varepsilon/\expec[\xi(h)]\}}\right].
	\en
To bound the first term, we again use~\eqref{eq-t1k3finite}:
	\eq
	\expec\left[t_1(K) \ind_{\{K \leq \varepsilon/\expec[\xi(h)]\}}\right]
	\geq - \frac{\betahat}{3(1-\betahat^2)} \expec\left[(B+K\expec[\xi(h)])^3
	\ind_{\{K \leq \varepsilon/\expec[\xi(h)]\}}\right].
	\en
It is easy to see that the terms $B^3\expec\left[\ind_{\{K > \varepsilon/\expec[\xi(h)]\}}\right]$ and $3B^2\expec[\xi(h)]\expec\left[K\ind_{\{K \leq \varepsilon/\expec[\xi(h)]\}}\right]$ that we get by expanding the above are of the form $B e$. To bound the other two terms, we use Lemma~\ref{lem-truncmoment} to obtain,
\ch{for $\vep\leq 1,$}
\eq
3B \expec[\xi(h)]^2\expec\left[K^2\ind_{\{K\leq \varepsilon/\expec[\xi(h)]\}}\right] \leq \left\{\begin{array}{ll} 3B \expec[\xi(h)]^2\expec\left[K^2\right]&\when \tau\in(4,5], \\ \\
3B C_{2,4}\expec[\xi(h)]^2 \log(1/\expec[\xi(h)])&\when \tau=4,\\ \\
3B C_{2,\tau}\expec[\xi(h)]^{\tau-2}&\when \tau\in(3,4),\end{array}\right.
\en
which are all of the form $B e$, and
\eq
\expec\left[K^3\ind_{\{K \leq \varepsilon/\expec[\xi(h)]\}}\right] \expec[\xi(h)]^3 \leq \left\{\begin{array}{ll} C_{3,5}\expec[\xi(h)]^3\log(1/\expec[\xi(h)])&\when \tau=5, \\ \\
C_{3,\tau}\expec[\xi(h)]^{\tau-2}&\when \tau\in(3,5).\end{array}\right.
\en
To bound $T_1$ for large $K$, we observe that
\eq
\expec\left[t_1(K) \ind_{\{K > \varepsilon/\expec[\xi(h)]\}}\right] \geq -\betahat B \expec[\ind_{\{K > \varepsilon/\expec[\xi(h)]\}}]-\betahat\expec[\xi(h)]\expec[K\ind_{\{K > \varepsilon/\expec[\xi(h)]\}}].
\en
Applying Lemma~\ref{lem-truncmoment} now gives, for $\tau\in(3,5]$
	\begin{align}
	\expec\left[t_1(K) \ind_{\{K > \varepsilon/\expec[\xi(h)]\}}\right]
	&\geq -\betahat B C_{0,\tau}\expec[\xi(h)]^{\tau-2} - \betahat C_{1,\tau}\expec[\xi(h)]^{\tau-2} \nn\\
	&= -C_4 \expec[\xi(h)]^{\tau-2}-B e_4.
	\end{align}

\paragraph{The lower bound on $T_2$.}
To bound $T_2$,  we split in a small and a large $K$ contribution:
	\eq
	T_2 = \expec[t_2(K) \ind_{\{K \leq \varepsilon/\expec[\xi(h)]\}}]+\expec[t_2(K)
	\ind_{\{K > \varepsilon/\expec[\xi(h)]\}}]\equiv T_2^{\sss \leq}+T_2^{\sss >},
	\en
where
	\eq
	t_2(k)=\xi\left(B+\sum_{i=1}^k \xi(h_i)\right)-\xi\left(B+k\expec[\xi(h)]\right).
	\en
To bound $T_2^{\sss >}$, we note that
	\eq
	t_2(k) \geq -\beta,
	\en
so that
	\eq
	T_2^{\sss >}
	\geq -\beta \expec[\ind_{\{K > \varepsilon/\expec[\xi(h)]\}}] \geq -C_5 \expec[\xi(h)]^{\ch{(\tau-2)\wedge 3}},
	\en
where we have used Lemma~\ref{lem-truncmoment} in the last inequality \ch{and the Markov inequality when $\expec[K^3]<\infty$}.

To bound $T_2^{\sss \leq}$, we start from
	\eq\label{eq-T2Taylor}
	 T_2^{\sss \leq}=\expec\left[\frac{\xi''(\zeta)}{2}\left(\sum_{i=1}^K\xi(h_i) - K\expec[\xi(h)]\right)^2\ind_{\{K \leq \varepsilon/\expec[\xi(h)]\}}\right],
	\en
\ch{We use that, for some $\zeta$ in between $B+\sum_{i=1}^K\xi(h_i)$ and $B+K\expec[\xi(h)]$,}
	\eq
	\xi''(\zeta) \geq -\frac{2\betahat}{1-\betahat^2} \Big(B+\sum_{i=1}^K\xi(h_i)+K\expec[\xi(h)]\Big).
	\en
to obtain
	\begin{align}
	T_2^{\sss \leq}
	& \geq -\frac{\betahat}{(1-\betahat^2)^2} \expec\Big[\Big(B+\sum_{i=1}^K\xi(h_i)
	+ K\expec[\xi(h)]\Big)\Big(\sum_{i=1}^K\xi(h_i) - K\expec[\xi(h)]\Big)^2
	\ind_{\{K \leq \varepsilon/\expec[\xi(h)]\}}\Big]\nn\\
	&\geq -\frac{\betahat}{(1-\betahat^2)^2} \Big(B \nu \expec\left[\left(\xi(h)-\expec[\xi(h)]\right)^2\right]
	+ \ch{\expec[K\ind_{\{K \leq \varepsilon/\expec[\xi(h)]\}}]} \expec\left[\left(\xi(h)-\expec[\xi(h)]\right)^3\right]\nn\\
	&\qquad+ 2\ch{\expec[K^2\ind_{\{K \leq \varepsilon/\expec[\xi(h)]\}}]}\expec[\xi(h)]\expec\left[\left(\xi(h)-\expec[\xi(h)]\right)^2\right]\Big)\nonumber\\
	&\geq -\frac{\betahat}{(1-\betahat^2)^2} \Big(B \nu \expec[\xi(h)^2]
	+2\ch{\expec[K^2\ind_{\{K \leq \varepsilon/\expec[\xi(h)]\}}]}\expec[\xi(h)]\expec[\xi(h)^2]+\nu\expec[\xi(h)^3]\Big).
	\end{align}
Using the bounds of Lemmas~\ref{lem-boundxih2} and~\ref{lem-boundxih3} we get,
	\eq
	T_2^{\sss \leq}\geq
	\left\{\begin{array}{ll} -\frac{\betahat}{(1-\betahat^2)^2}\left(2\expec[K^2]C_2+C_3\nu\right)\expec[\xi(h)]^3-B e_5
	&\when \expec[K^3]<\infty,\\ \\
	-\frac{\betahat}{(1-\betahat^2)^2}\left(2\expec[K^2]C_2+C_3\nu\right)\expec[\xi(h)]^{3}\log(1/\expec[\xi(h)])-B e_5
	&\when \tau=5,\\ \\
	-\frac{\betahat}{(1-\betahat^2)^2}\left(C'_{2,\tau}+C_3\nu\right)\expec[\xi(h)]^{\tau-2}-B e_5
	&\when \tau\in(3,5),\end{array}\right.
	\en
where \ch{$C'_{2,\tau}=\expec[K^2]C_2$ for $\tau\in (4,5)$ and $C'_{2,\tau}=C_2$ for $\tau\in(3,4]$. Here,
we have also used that (a) $\expec[\xi(h)]^3\leq\expec[\xi(h)]^{3}\log(1/\expec[\xi(h)])$ for $\tau=5$;
(b) $\expec[\xi(h)]^3\leq\expec[\xi(h)]^{\tau-2}$ for $\tau\in (4,5]$; and (c) ${\expec[K^2\ind_{\{K \leq \varepsilon/\expec[\xi(h)]\}}]}\expec[\xi(h)]
\leq \vep \nu\leq \nu$ for $\tau\in (3,4]$.}
%
%
%
%
Combining the bounds on $T_1$ and $T_2$ gives the desired lower bound on $\expec[\xi(h)]$.
\end{proof}

\subsection{Critical exponents $\boldsymbol{\beta}$ and $\boldsymbol{\delta}$\label{sec-CritExpBetaDelta}}
It remains to show that the bounds on $\expec[\xi(h)]$ give us the desired result:
\begin{theorem}[Values of $\boldsymbol{\beta}$ and $\boldsymbol{\delta}$]
The critical exponent $\boldsymbol{\beta}$ equals
	\eq
	\boldsymbol{\beta}=\left\{\begin{array}{ll} 1/2 &\when \expec[K^3]<\infty,\\
	1/(\tau-3) &\when \tau\in(3,5),\end{array}\right.
	\en
and the critical exponent $\boldsymbol{\delta}$ equals
	\eq
	\boldsymbol{\delta}=\left\{\begin{array}{ll} 3 &\when \expec[K^3]<\infty,\\
	\tau-2 &\when \tau\in(3,5).\end{array}\right.
	\en
For $\tau=5$,
	\eq
	M(\beta,0^+) \asymp \Big(\frac{\beta-\beta_c}{\log{(1/(\beta-\beta_c))}}\Big)^{1/2} \quad {\rm for\ } \beta \searrow \beta_c,
	\qquad
	M(\beta_c, B) \asymp \Big(\frac{B}{\log(1/B)}\Big)^{1/3} \quad {\rm for\ } B \searrow 0.
	\en
\end{theorem}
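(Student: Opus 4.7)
The plan is to convert the bounds on $c\equiv c(\beta,B)=\expec[\xi(h(\beta,B))]$ from Propositions~\ref{prop-UpperExi} and~\ref{prop-LowerExi} into asymptotics for $M(\beta,B)$ via Proposition~\ref{prop-Magnetization}. Rearranging the two bounds yields the common two-sided relation
	\eq
	c^{\boldsymbol{\delta}} \asymp \beta B + (\betahat\nu-1)\,c,
	\en
valid up to the error $Be_1$ which satisfies $\lim_{B\searrow0}e_1(\beta_c,B)=0$, and with $c^{\boldsymbol{\delta}}$ replaced by $c^{\boldsymbol{\delta}}\log(1/c)$ when $\tau=5$. The task is then to solve this relation in the two relevant regimes and transfer the result to $M$.

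For $\boldsymbol{\delta}$, I set $\beta=\beta_c$, so that $\betahat\nu=1$ and the linear term drops. The error $Be_1(\beta_c,B)$ is absorbed because $e_1(\beta_c,B)\to0$ by~\eqref{eq-boundsei}, leaving $c^{\boldsymbol{\delta}}\asymp B$ and hence $c\asymp B^{1/\boldsymbol{\delta}}$. For $\boldsymbol{\beta}$, I take $B\searrow0$ at fixed $\beta>\beta_c$ and set $c_\beta=\lim_{B\searrow0}c(\beta,B)$. This limit exists by monotonicity, is strictly positive for $\beta>\beta_c$ (otherwise $M(\beta,0^+)=0$ would contradict the definition of $\beta_c$), and tends to $0$ as $\beta\searrow\beta_c$ by Lemma~\ref{lem-hcto0}. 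The term $Be_1$ vanishes in the limit $B\searrow0$, so dividing the displayed relation through by $c_\beta>0$ gives $c_\beta^{\boldsymbol{\delta}-1}\asymp\betahat\nu-1$. Since $\beta\mapsto\tanh\beta$ is smooth with positive derivative at $\beta_c$ and $\betahat_c=1/\nu$, we have $\betahat\nu-1\asymp\beta-\beta_c$, yielding $c_\beta\asymp(\beta-\beta_c)^{1/(\boldsymbol{\delta}-1)}$. Substituting $\boldsymbol{\delta}=3$ or $\tau-2$ reproduces $\boldsymbol{\beta}=1/2$ or $1/(\tau-3)$.

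To pass from $c$ to $M$, Proposition~\ref{prop-Magnetization} gives $M(\beta,B)=\expec[\tanh\tilde h]$ with $\tilde h=B+\sum_{i=1}^D\xi(h_i)$ and $\expec[\tilde h]=B+\expec[D]\,c$. The upper bound $\tanh x\leq x$ yields $M\leq\expec[\tilde h]$, while $\tanh x\geq c_*\min(x,1)$ combined with $\expec[\tilde h\indic{\tilde h>1}]=o(\expec[\tilde h])$ gives the matching lower bound; the tail estimate follows from H\"older using a $(1+\eta)$-moment of $\tilde h$, which is finite whenever $\expec[D^{1+\eta}]<\infty$, a property that holds for some $\eta>0$ as soon as $\tau>2$. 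In both regimes $B=o(c)$ because $1/\boldsymbol{\delta}<1$, so $M\asymp c$ and the exponents are transferred. Finally, for the $\tau=5$ corrections, the bounds \eqref{eq-UpperExi-tau5} and \eqref{eq-LowerExi-tau5} take the form $c^a\log(1/c)\asymp y$ with $(a,y)=(3,B)$ or $(2,\beta-\beta_c)$; a single log-bootstrap---take logarithms to get $a\log(1/c)\asymp\log(1/y)$ to leading order, then substitute back---produces $c\asymp(y/\log(1/y))^{1/a}$, matching~\eqref{log-corr-M-tau5}. I expect the main technical obstacle to be the passage from $c$ to $M$ under heavy tails of $D$, since controlling $\expec[\tilde h\indic{\tilde h>1}]$ uniformly as the parameters degenerate requires a careful truncation argument; the log-bootstrap itself is straightforward once the plain-power case is in hand.
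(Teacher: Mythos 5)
Your skeleton is exactly the paper's: you combine Propositions~\ref{prop-UpperExi} and~\ref{prop-LowerExi} into the two-sided relation $c^{\boldsymbol{\delta}}\asymp \beta B+(\betahat\nu-1)\,c$ (with the extra $\log(1/c)$ at $\tau=5$), solve it at $\beta=\beta_c$ for $\boldsymbol{\delta}$ and in the limit $B\searrow0$, $\beta\searrow\beta_c$ for $\boldsymbol{\beta}$, use $\betahat\nu-1\asymp\beta-\beta_c$ by Taylor expansion of $\tanh$, absorb $Be_1$ via \eqref{eq-boundsei}, use Lemma~\ref{lem-hcto0} and positivity of $c_\beta$ for $\beta>\beta_c$, and bootstrap the logarithm for $\tau=5$; all of this matches the paper's proof, including the transfer upper bound $M\leq B+\expec[D]\,c$.

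The genuine gap is in your lower-bound transfer from $c=\expec[\xi(h)]$ to $M$. You claim $\expec[\tilde h\,\indic{\tilde h>1}]=o(\expec[\tilde h])$ ``by H\"older using a $(1+\eta)$-moment of $\tilde h$, finite whenever $\expec[D^{1+\eta}]<\infty$, which holds as soon as $\tau>2$.'' With only a uniformly bounded $(1+\eta)$-moment this fails: H\"older gives $\expec[\tilde h\,\indic{\tilde h>1}]\leq\expec[\tilde h^{1+\eta}]^{1/(1+\eta)}\prob(\tilde h>1)^{\eta/(1+\eta)}=O\big(\expec[\tilde h]^{\eta/(1+\eta)}\big)$, which is of \emph{larger} order than $\expec[\tilde h]\asymp c$, not smaller; and the cruder bound $\expec[\tilde h\,\indic{\tilde h>1}]\leq\expec[\tilde h^{1+\eta}]$ is in general only $O(c)$ (boundedness of $\xi$ gives no better than $\expec[\xi(h)^{1+\eta}]\leq\beta^{\eta}c$), so you do not even gain a constant factor. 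The needed input is quantitative smallness of higher moments of $\xi(h)$ relative to $c$, i.e.\ Lemma~\ref{lem-boundxih2} (which is where $\tau>3$, not $\tau>2$, enters). Once you invoke it, the cleanest route is the paper's: $\tanh x\geq x-x^2$, so $M\geq B+\expec[D]c-\expec[\tilde h^2]$, and $\expec[\tilde h^2]=B^2+2B\expec[D]c+\expec[D(D-1)]c^2+\expec[D]\expec[\xi(h)^2]$ is $o(c)$ plus $B\cdot O(1)$ by Lemma~\ref{lem-boundxih2} and $\expec[D^2]<\infty$ under the theorem's hypotheses. (Your truncation can also be repaired by interpolating $\expec[\xi(h)^{1+\eta}]\leq c^{1-\eta}\expec[\xi(h)^2]^{\eta}$ and then applying Lemma~\ref{lem-boundxih2}, but finiteness of $\expec[D^{1+\eta}]$ alone is not the relevant hypothesis.) With that step fixed, the rest of your argument, including the $\tau=5$ corrections, goes through as in the paper.
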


\begin{proof} We prove the upper and the lower bounds separately, starting with the upper bound.
\paragraph{The upper bounds on the magnetization.}
We start by bounding the magnetization from above:
	\eq
	M(\beta,B)=\expec\left[\tanh\left(B+\sum_{i=1}^D\xi(h_i)\right)\right] \leq B+\expec[D]\expec[\xi(h)].
	\en
We first perform the analysis for $\boldsymbol{\beta}$. Taking the limit $B\searrow0$ in \eqref{eq-UpperExi}
in Proposition \ref{prop-UpperExi} yields
	\eq
	\expec[\xi(h_0)] \leq \betahat \nu \expec[\xi(h_0)] - C_1 \expec[\xi(h_0)]^{\boldsymbol{\delta}},
	\en
where $h_0=h(\beta,0^+)$. For $\beta>\beta_c$, by definition, $\expec[\xi(h_0)]>0$ and thus we can divide through by $\expec[\xi(h_0)]$ to obtain
	\eq
	\expec[\xi(h_0)]^{\boldsymbol{\delta}-1} \leq \frac{\betahat \nu-1}{C_1}.
	\en
By Taylor's theorem,
	\eq
	\label{eq-Taylorbetahatup}
	\betahat \nu-1 \leq \nu(1-\betahat_c^2)(\beta-\beta_c).
	\en
Hence,
	\eq
	\label{eq-Exih0}
	\expec[\xi(h_0)] \leq \left(\frac{\nu(1-\betahat_c^2)}{C_1}\right)^{1/(\boldsymbol{\delta}-1)}
	(\beta-\beta_c)^{1/(\boldsymbol{\delta}-1)}.
	\en
Using that $\boldsymbol{\beta}=1/(\boldsymbol{\delta}-1)$,
	\eq
	M(\beta,0^+)\leq \expec[D]\left(\frac{\nu(1-\betahat_c^2)}{C_1}\right)^{\boldsymbol{\beta}}(\beta-\beta_c)^{\boldsymbol{\beta}},
	\en
from which it easily follows that
	\eq
	\label{magnetization-beta-UB}
	\limsup_{\beta\searrow\beta_c} \frac{M(\beta,0^+)}{(\beta-\beta_c)^{\boldsymbol{\beta}}}<\infty.
	\en
We complete the analysis for $\boldsymbol{\beta}$ by analyzing $\tau=5$.
Since \eqref{eq-UpperExi} also applies to $\tau=5$,  \eqref{magnetization-beta-UB}
holds as well. We now improve upon this using \eqref{eq-UpperExi-tau5}
in Proposition \ref{prop-UpperExi}, which yields in a similar way as above that
	\eq
	\expec[\xi(h_0)]^{2} \leq \frac{\betahat \nu-1}{C_1\log(1/\expec[\xi(h_0)])}.
	\en
Since $x\mapsto 1/\log(1/x)$ is increasing on $(0,1)$ and $\expec[\xi(h_0)]\leq C(\beta-\beta_c)^{1/2}$
for some $C>0$, we immediately obtain that
	\eq
	\expec[\xi(h_0)]^{2} \leq \frac{\betahat \nu-1}{C_1\log(1/\expec[\xi(h_0)])}
	\leq \frac{\betahat \nu-1}{C_1\log(1/[C(\beta-\beta_c)^{1/2}])}.
	\en
Taking the limit of $\beta \searrow\beta_c$ as above then completes the proof.

We continue with the analysis for $\boldsymbol{\delta}$.
Setting $\beta=\beta_c$ in~\eqref{eq-UpperExi} and rewriting gives
	\eq
	\expec[\xi(h_c)]\leq\left(\frac{\betahat_c}{C_1}\right)^{1/\boldsymbol{\delta}} B^{1/\boldsymbol{\delta}},
	\en
with $h_c=h(\beta_c,B)$. Hence,
	\eq
	M(\beta_c,B)\leq B+\expec[D]\left(\frac{\betahat_c}{C_1}\right)^{1/\boldsymbol{\delta}} B^{1/\boldsymbol{\delta}},
	\en
so that, using $1/\boldsymbol{\delta}<1$,
	\eq
	\limsup_{B\searrow0} \frac{M(\beta_c,B)}{B^{1/\boldsymbol{\delta}}}<\infty.
	\en
The analysis for $\boldsymbol{\delta}$ for $\tau=5$ can be performed in an identical way as for
$\boldsymbol{\beta}$.

\paragraph{The lower bounds on the magnetization.}
For the lower bound on the magnetization we use that
	\eq
	\frac{\dint^2}{\dint x^2} \tanh x = -2\tanh x (1-\tanh^2 x) \geq -2,
	\en
so that
	\eq
	\tanh x \geq x-x^2.
	\en
Hence,
	\begin{align}
	M(\beta,B) &\geq B + \expec[D]\expec[\xi(h)]-\expec\Big[\Big(B+\sum_{i=1}^D \xi(h_i)\Big)^2\Big] \nn\\
	&\geq B + \expec[D]\expec[\xi(h)]-B e_6-\expec[D(D-1)]\expec[\xi(h)]^2-\expec[D]C_2\expec[\xi(h)]^{2\wedge(\tau-2)}\nn\\
	&=B + (\expec[D]-e_7)\expec[\xi(h)]-B e_6,
	\end{align}
with $a\wedge b$ denoting the minimum of $a$ and $b$, because $\expec[\xi(h)]$ converges to
zero for both limits of interest.

We again first perform the analysis for $\boldsymbol{\beta}$ and $\tau\neq 5$.
We get from \eqref{eq-LowerExi} in Proposition~\ref{prop-LowerExi} that
	\eq
	\expec[\xi(h_0)]\geq\left(\frac{\betahat\nu-1}{c_1}\right)^{1/(\boldsymbol{\delta}-1)}
	\geq \left(\frac{\nu(1-\betahat^2)}	{c_1}\right)^{\boldsymbol{\beta}}(\beta-\beta_c)^{\boldsymbol{\beta}},
	\en
where the last inequality holds because, by Taylor's theorem,
	\eq
	\label{eq-Taylorbetahatlow}
	\betahat \nu-1 \geq \nu(1-\betahat^2)(\beta-\beta_c).
	\en
Hence,
	\eq
	\liminf_{\beta\searrow \beta_c}\frac{M(\beta,0^+)}{(\beta-\beta_c)^{\boldsymbol{\beta}}} \geq
	\expec[D]\left(\frac{\nu(1-\betahat^2)}	{c_1}\right)^{\boldsymbol{\beta}}>0.
	\en
For $\tau=5$, we note that \eqref{eq-LowerExi-tau5} as well as the fact that $\log{1/x}\leq A_{\vep} x^{-\vep}$
for all $x\in (0,1)$ and some $A_{\vep}>0$, yields that
	\eq
	\label{log-corr-tau5-beta1}
	\expec[\xi(h_0)]\geq\left(\frac{\betahat\nu-1}{A_{\vep} c_1}\right)^{1/(2\ch{+}\vep)}
	\geq \left(\frac{\nu(1-\betahat^2)}	{A_{\vep} c_1}\right)^{1/(2\ch{+}\vep)}(\beta-\beta_c)^{1/(2\ch{+}\vep)}.
	\en
Then again using  \eqref{eq-LowerExi-tau5} yields, for some constant $c>0$,
	\eq
	\label{log-corr-tau5-beta2}
	\expec[\xi(h_0)]\geq\left(\frac{\betahat\nu-1}{c_1\log(1/\expec[\xi(h_0)])}\right)^{1/2}
	\geq c\Big(\frac{\beta-\beta_c}{\log(1/(\beta-\beta_c))}\Big)^{1/2},
	\en
once more since $x\mapsto 1/(\log(1/x))$ is increasing.

We continue with the analysis for $\boldsymbol{\delta}$.
Again, setting $\beta=\beta_c$ in~\eqref{eq-LowerExi}, we get
	\eq
	\expec[\xi(h_c)]\geq\left(\frac{\betahat_c-e_1}{c_1}\right)^{1/\boldsymbol{\delta}}B^{1/\boldsymbol{\delta}},
	\en
from which it follows that
	\eq
	\liminf_{B\searrow0} \frac{M(\beta_c,B)}{B^{1/\boldsymbol{\delta}}} \geq
	\expec[D]\left(\frac{\betahat_c}{c_1}\right)^{1/\boldsymbol{\delta}}>0,
	\en
as required.  The extension to $\tau=5$ can be dealt with in an identical way as in
\eqref{log-corr-tau5-beta1}--\eqref{log-corr-tau5-beta2}.
This proves the theorem.
\end{proof}

\section{Critical exponents: Susceptibility}
\label{sec-CritExpChi}
In this section, we study the susceptibility. In Section \ref{sec-gamma} we identify
$\boldsymbol{\gamma}$, in Section  \ref{sec-gamma'} we prove a lower bound on
$\boldsymbol{\gamma'}$ and add a heuristic why this is the correct value.

\subsection{The critical exponent $\boldsymbol{\gamma}$}
\label{sec-gamma}

For the susceptibility in the {\em subcritical} phase, i.e., in the high-temperature region $\beta<\beta_c$,
we can not only identify the critical exponent $\boldsymbol{\gamma}$, but we can also identify the constant:

\begin{theorem}[Critical exponent $\boldsymbol{\gamma}$]
For $\ch{\expec[K]<\infty}$ and $\beta<\beta_c$,
	\ch{
	\eq
	\label{chi-comp-highT}
	\chi(\beta,0^+)=1+\frac{\expec[D]\betahat}{1-\nu\betahat}.
	\en
In particular,
	\eq
	\label{chi-asy-highT}
	\lim_{\beta \nearrow \beta_c} \chi(\beta,0^+)(\beta_c-\beta) = \frac{\expec[D]\betahat_c}{1-\betahat_c^2},
	\en}
and hence
	\eq
	\boldsymbol{\gamma}=1.
	\en
\end{theorem}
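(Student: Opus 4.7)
The plan is to identify $\chi(\beta,0^+)$ with a derivative of the magnetization and evaluate it using the explicit formula of Proposition~\ref{prop-Magnetization}.

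First, a standard differentiation of the Boltzmann--Gibbs measure~\eqref{eq-boltzmann} in $B$ gives $\chi_n(\beta,B)=\partial_B M_n(\beta,B)$ for every $n$ and every $B>0$. Since $M_n(\beta,\cdot)$ is convex in $B$ and converges to $M(\beta,\cdot)$, a Griffiths-type lemma for convex functions transfers this to $\chi(\beta,B)=\partial_B M(\beta,B)$ wherever the limit is differentiable. For $\beta<\beta_c$, $M(\beta,\cdot)$ is smooth at $0$ by virtue of the subcritical contraction $\nu\betahat<1$ applied to the recursion~\eqref{eq-recursion}, so $\chi(\beta,0^+)=\partial_B M(\beta,B)\big|_{B=0^+}$.

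Second, I would differentiate
$$M(\beta,B)=\expec\Big[\tanh\Big(B+\sum_{i=1}^D\xi(h_i)\Big)\Big]$$
from Proposition~\ref{prop-Magnetization} in $B$. At $\beta<\beta_c$ and $B=0^+$, the unique fixed point of~\eqref{eq-recursion} is $h\equiv 0$ a.s., by Lemma~\ref{lem-hcto0} together with the uniqueness from~\cite{DomGiaHof10}. Using $\tanh 0=0$, $(\tanh)'(0)=1$ and $\xi'(0)=\betahat$, this gives
$$\chi(\beta,0^+)=1+\expec[D]\,\betahat\,u,$$
where $u$ denotes the $B\searrow 0$ limit of $\expec[\partial_B h(\beta,B)]$.

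Third, I would differentiate the fixed-point recursion $h\stackrel{d}{=}B+\sum_{i=1}^K\xi(h_i)$ in $B$, take expectations at $B=0^+$, and use the independence of $(h_i)_{i\geq 1}$ and $K$ to obtain $u=1+\nu\betahat\,u$, whence $u=1/(1-\nu\betahat)$. The denominator is strictly positive precisely because $\beta<\beta_c=\atanh(1/\nu)$ forces $\nu\betahat<1$. Substituting yields~\eqref{chi-comp-highT}, and the asymptotic~\eqref{chi-asy-highT} follows from the Taylor expansion $1-\nu\betahat=\nu(\betahat_c-\betahat)\sim \nu(1-\betahat_c^2)(\beta_c-\beta)$. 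From either expression, $\boldsymbol{\gamma}=1$ is immediate.

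The main obstacle is the rigorous interchange of limits: the differentiation with $\lim_n$ (to upgrade $\chi_n=\partial_B M_n$ to $\chi=\partial_B M$ and then to the one-sided limit $B\searrow 0$), and the differentiation with the expectation in the distributional recursion. For the former, convexity of $M_n(\beta,\cdot)$ together with GKS monotonicity of $\chi_n$ in $B$ allows one to invoke Griffiths-type convergence of derivatives at points where $M$ is smooth, which in the subcritical regime extends down to $B=0^+$. For the latter, the recursion is a contraction on laws of finite first moment when $\nu\betahat<1$, so $B\mapsto h(\beta,B)$ is smooth and its derivative is determined by the linearization; as an alternative route one can bypass differentiation by iterating~\eqref{eq-recursion} and using Lemma~\ref{lem-boundatanh} to represent $h$ as an absolutely convergent series in $B$ whose remainder is geometrically controlled by $\nu\betahat<1$.
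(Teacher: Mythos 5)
Your route is essentially correct, but it is genuinely different from the paper's. The paper never differentiates the magnetization in $B$: it (i) sandwiches the graph susceptibility between tree susceptibilities with $+$ and free boundary conditions using the GKS and GHS inequalities together with the bound of \cite[Lemma~3.1]{DomGiaHof10} on the discrepancy between the two boundary conditions, and (ii) computes the two-point function on the tree \emph{exactly} along the path from the root, obtaining $\langle\sigma_\phi\sigma_j\rangle-\langle\sigma_\phi\rangle\langle\sigma_j\rangle=(1-\langle\sigma_\phi\rangle^2)\prod_{i}\sinh(2\beta)/(\cosh(2\beta)+\cosh(2h_{v_i}))$, after which it lets $B\searrow0$ by dominated convergence and sums $\sum_\ell\expec[Z_\ell]\betahat^\ell$. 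Your computation is the implicit form of the same object: the paper's factor equals $\xi'(h_{v_i})$, so its product over the path is exactly what you get by iterating the linearized recursion for $\partial_B h$. What your approach buys is that the graph-to-tree step reduces to facts already available (Proposition~\ref{prop-Magnetization} plus a Griffiths-type convexity lemma), avoiding the transfer-matrix computation; what the paper's approach buys is the exact identity \eqref{chi-rewrite}, valid for all $\beta$ and $B>0$, which it then reuses in the analysis of $\boldsymbol{\gamma'}$. Two points in your sketch need tightening. First, by GHS, $M_n(\beta,\cdot)$ is \emph{concave} (not convex) in $B\geq0$, and the monotonicity of $\chi_n$ in $B$ is GHS, not GKS; the Griffiths-lemma argument goes through verbatim with concavity, using a.e.\ differentiability and monotonicity of $\partial_B M$ to reach $B=0^+$. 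Second, ``differentiating the distributional recursion'' is not meaningful for laws alone and needs a pathwise realization: on $\calT(D,K,\infty)$ the root field $h(\beta,B)$ is a monotone, differentiable function of $B$ with $\partial_B h=\sum_{v}\prod_{i\leq|v|}\xi'(h_{v_i})\leq\sum_v\betahat^{|v|}$, whose expectation is finite precisely when $\nu\betahat<1$; this domination justifies both the interchange of derivative and expectation and the limit $\expec[\xi'(h)\,\partial_B h]\to\betahat\,\lim_{B\searrow0}\expec[\partial_B h]$ (note $\xi'(h)$ and $\partial_B h$ are dependent, so you need this dominated-convergence step rather than plain Wald, and it must avoid second moments of $K$ since only $\expec[K]<\infty$ is assumed). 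Finally, carrying out your own Taylor expansion gives $\lim_{\beta\nearrow\beta_c}\chi(\beta,0^+)(\beta_c-\beta)=\expec[D]\betahat_c/(\nu(1-\betahat_c^2))=\expec[D]\betahat_c^2/(1-\betahat_c^2)$, so the constant displayed in \eqref{chi-asy-highT} appears to carry an extra factor $\nu$; this does not affect the conclusion $\boldsymbol{\gamma}=1$.
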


\begin{proof} The proof \ch{is divided into three steps. We first reduce the suceptibility on the random graph to the one on the random Bethe
tree. Secondly, we rewrite the susceptibility on the tree using transfer matrix techniques. Finally, we use this rewrite (which applies to
\emph{all} $\beta$ and $B>0$) to prove that $\boldsymbol{\gamma}=1$.}

\paragraph{Reduction to the random tree.}
Let $\phi$ denote a vertex selected uniformly at random from $[n]$ and let $\expec_\phi$ denote its expectation. Then we can write the susceptibility as
	\eq
	\chi_n \equiv \frac1n \sum_{i,j=1}^n \Big(\langle\sigma_i\sigma_j\rangle_{\col{\mu_n}} - \langle\sigma_i\rangle_{\col{\mu_n}}\langle\sigma_j\rangle_{\col{\mu_n}}\Big)
	= \expec_\phi\left[\sum_{j=1}^n
	\Big(\langle\sigma_\phi\sigma_j\rangle_{\col{\mu_n}} - \langle\sigma_\phi\rangle_{\col{\mu_n}}\langle\sigma_j\rangle_{\col{\mu_n}}\Big)\right].
	\en
Note that
	\eq
	\label{eq-correlisderiv}
	\langle\sigma_i\sigma_j\rangle_{\col{\mu_n}} - \langle\sigma_i\rangle_{\col{\mu_n}}\langle\sigma_j\rangle_{\col{\mu_n}}
	= \frac{\partial \langle\sigma_i\rangle_{\col{\mu_n}}}{\partial B_j},
	\en
which is, by the GHS inequality~\cite{GriHurShe70}, decreasing in external fields at all other vertices $k\in[n]$. Denote by $\langle\cdot\rangle^{t,+/f}$ the Ising model with $+$/free boundary conditions, respectively, at all vertices at graph distance $t$ from $\phi$. Then, for all $t\geq1$,
	\eq
	\chi_n \geq \expec_\phi\left[\sum_{j=1}^n \Big(\langle\sigma_\phi\sigma_j\rangle^{t,+}_{\chs{\mu_n}}
	- \langle\sigma_\phi\rangle^{t,+}_{\chs{\mu_n}}\langle\sigma_j\rangle_{\chs{\mu_n}}^{t,+}\Big)\right].
	\en
By introducing boundary conditions, only vertices in the ball $B_\phi(t)$ contribute to the sum.
Hence, by taking the limit $n\rightarrow \infty$ and using that the graph is locally tree-like,
	\eq
	\chi \geq \expec\left[\sum_{j\in T_t} \Big(\langle\sigma_\phi\sigma_j\rangle^{t,+}
	- \langle\sigma_\phi\rangle^{t,+}\langle\sigma_j\rangle^{t,+}\Big)\right],
	\en
where the expectation now is over the random tree $T_t \sim \calT(D,K,t)$ with root $\phi$.

For an upper bound on $\chi_n$ we use a trick similar to one used in the proof of~\cite[Corollary~4.5]{DemMon10}: Let $B_j'=B$ if $j\in B_t(\phi)$ and $B_j'=B+H$ if $j \notin B_t(\phi)$ for some $H>-B$. Denote by $\langle\cdot\rangle_{H}$ the associated Ising expectation. Then, because of~\eqref{eq-correlisderiv},
	\eq
	\expec_\phi\left[\sum_{j\notin B_t(\phi)} \Big(\langle\sigma_\phi\sigma_j\rangle
	- \langle\sigma_\phi\rangle\langle\sigma_j\rangle\Big)\right]
	= \expec_\phi\left[ \frac{\partial}{\partial H} \langle\sigma_\phi\rangle_H \Bigg|_{H=0}\right],
	\en
By the GHS inequality, $\langle\sigma_\phi\rangle_H$ is a concave function of $H$ and hence,
	\eq
	\expec_\phi\left[\frac{\partial}{\partial H} \langle\sigma_\phi\rangle_H \Bigg|_{H=0}\right]
	\leq \expec_\phi\left[\frac{2}{B}\left(\langle\sigma_\phi\rangle_{H=0}-\langle\sigma_\phi\rangle_{H=-B/2}\right)\right].
	\en
Using the GKS inequality this can be bounded from above by
	\eq
	\expec_\phi\left[\frac{2}{B}\left(\langle\sigma_\phi\rangle^{t,+}_{H=0}-\langle\sigma_\phi\rangle^{t,f}_{H=-B/2}\right)\right]
	= \expec_\phi\left[\frac{2}{B}\left(\langle\sigma_\phi\rangle^{t,+}-\langle\sigma_\phi\rangle^{t,f}\right)\right],	
	\en
where the equality holds because the terms depend only on the system in the ball $B_t(\phi)$ and hence not on $H$. By letting $n\rightarrow\infty$, by the locally tree-likeness, this is equal to
	\eq
	\frac{2}{B}\expec\left[\left(\langle\sigma_\phi\rangle^{t,+}-\langle\sigma_\phi\rangle^{t,f}\right)\right],
	\en
where the expectation and the Ising model now is over the random tree $T_t \sim \calT(D,K,t)$ with root $\phi$. From~\cite[Lemma~3.1]{DomGiaHof10} we know that this expectation can be bounded from above by $M/t$ for some constant $M=M(\beta,B)<\infty$. Hence, if $t\rightarrow\infty$,
	\eq
	\lim_{t\rightarrow\infty}\expec\left[\sum_{j\in T_t} \Big(\langle\sigma_\phi\sigma_j\rangle^{t,+}
	- \langle\sigma_\phi\rangle^{t,+}\langle\sigma_j\rangle^{t,+}\Big)\right]
	\leq \chi \leq \lim_{t\rightarrow\infty} \expec\left[\sum_{j \in T_t} \Big(\langle\sigma_\phi\sigma_j\rangle^{t,f}
	- \langle\sigma_\phi\rangle^{t,f}\langle\sigma_j\rangle^{t,f}\Big)\right].
	\en

\paragraph{Rewrite of the susceptibility on trees.}
It remains to study the susceptibility on trees. For this, condition on the tree $T_\infty$. Then, for some vertex $j$ at height $\ell\leq t$ in the tree, denote the vertices on the unique path from $\phi$ to $j$ by $\phi=v_0,v_1,\ldots,v_\ell=j$ and let, for $0\leq i\leq\ell$, $S_{\leq i}=(\sigma_{v_0},\ldots,\sigma_{v_i})$. We first compute the expected value of a spin $\sigma_{v_i}$ on this path, conditioned on the spin values $S_{\leq i-1}$. Note that under this conditioning the expected spin value only depends on the spin value $\sigma_{v_{i-1}}$ and the effective field $h_{v_i}=h_{v_i}^{t,+/f}$ obtained by pruning the tree at vertex $v_i$, i.e., by removing all edges at vertex $v_i$ going away from the root and replacing the external magnetic field at vertex $v_i$ by $h_{v_i}$ which can be exactly computed using~\cite[Lemma~4.1]{DemMon10}. Hence,
	\eq
	\langle\sigma_{v_i} | S_{\leq i-1}\rangle^{t,+/f} =\frac{\e^{\beta\sigma_{v_{i-1}}+h_{v_i}}
	-\e^{-\beta \sigma_{v_{i-1}} - h_{v_i}}}{\e^{\beta\sigma_{v_{i-1}}+h_{v_i}}+\e^{-\beta \sigma_{v_{i-1}} - h_{v_i}}}.
	\en
We can write the indicators $\ind_{\{\sigma_{v_{i-1}}=\pm1\}}=\frac12 (1\pm\sigma_{v_{i-1}})$, so that the above equals
	\begin{align}
	\frac12& (1+\sigma_{v_{i-1}})\frac{\e^{\beta+h_{v_i}}-\e^{-\beta - h_{v_i}}}{\e^{\beta+h_{v_i}}+\e^{-\beta- h_{v_i}}}
	+\frac12 (1-\sigma_{v_{i-1}})\frac{\e^{-\beta+h_{v_i}}-\e^{\beta - h_{v_i}}}{\e^{-\beta+h_{v_i}}+\e^{\beta  - h_{v_i}}}\\
	&= \sigma_{v_{i-1}} \frac12 \left(\frac{\e^{\beta+h_{v_i}}-\e^{-\beta - h_{v_i}}}{\e^{\beta+h_{v_i}}
	+\e^{-\beta- h_{v_i}}}-\frac{\e^{-\beta+h_{v_i}}-\e^{\beta - h_{v_i}}}{\e^{-\beta+h_{v_i}}+\e^{\beta  - h_{v_i}}}\right)
	+ \frac12 \left(\frac{\e^{\beta+h_{v_i}}-\e^{-\beta - h_{v_i}}}{\e^{\beta+h_{v_i}}
	+\e^{-\beta- h_{v_i}}}+\frac{\e^{-\beta+h_{v_i}}-\e^{\beta - h_{v_i}}}{\e^{-\beta+h_{v_i}}+\e^{\beta  - h_{v_i}}}\right).\nn
	\end{align}
By pairwise combining the terms over a common denominator the above equals
	\begin{align}
	\sigma_{v_{i-1}} \frac12 & \frac{(\e^{\beta+h_{v_i}}-\e^{-\beta - h_{v_i}})
	(\e^{-\beta+h_{v_i}}+\e^{\beta  - h_{v_i}})-(\e^{-\beta+h_{v_i}}-\e^{\beta - h_{v_i}})
	(\e^{\beta+h_{v_i}}+\e^{-\beta- h_{v_i}})}{(\e^{\beta+h_{v_i}}+\e^{-\beta- h_{v_i}})(\e^{-\beta+h_{v_i}}+\e^{\beta  - h_{v_i}})} \nn\\
	&+ \frac12 \frac{(\e^{\beta+h_{v_i}}-\e^{-\beta - h_{v_i}})(\e^{-\beta+h_{v_i}}+\e^{\beta  - h_{v_i}})
	+(\e^{-\beta+h_{v_i}}-\e^{\beta - h_{v_i}})(\e^{\beta+h_{v_i}}+\e^{-\beta- h_{v_i}})}{(\e^{\beta+h_{v_i}}+\e^{-\beta- h_{v_i}})
	(\e^{-\beta+h_{v_i}}+\e^{\beta  - h_{v_i}})}.
	\end{align}
By expanding all products, this equals, after cancellations,
	\begin{align}\
	\label{eq-sigmavigivenslei}
	\sigma_{v_{i-1}}&\frac{\e^{2\beta}+\e^{-2\beta}}{\e^{2\beta}+\e^{-2\beta}+\e^{2h_{v_i}}+\e^{-2h_{v_i}}}
	+\frac{\e^{2h_{v_i}}+\e^{-2h_{v_i}}}{\e^{2\beta}+\e^{-2\beta}+\e^{2h_{v_i}}+\e^{-2h_{v_i}}}\nn\\
	&=\sigma_{v_{i-1}}\frac{\sinh(2\beta)}{\cosh(2\beta)+\cosh(2h_{v_i})} + \frac{\sinh(2h_{v_i})}{\cosh(2\beta)+\cosh(2h_{v_i})}.
	\end{align}
Using this, we have that
	\eq
	\langle\sigma_{v_\ell}\rangle^{t,+/f} =\langle\langle\sigma_{v_\ell}|S_{\leq \ell-1}\rangle^{t,+/f}\rangle^{t,+/f}
	= \langle\sigma_{v_{\ell-1}}\rangle^{t,+/f}\frac{\sinh(2\beta)}{\cosh(2\beta)
	+\cosh(2h_{v_\ell})} + \frac{\sinh(2h_{v_\ell})}{\cosh(2\beta)+\cosh(2h_{v_\ell})}.
	\en
Applying this recursively, we get
	\begin{align}
	\langle\sigma_{v_\ell}\rangle^{t,+/f}
	= \langle\sigma_{v_0}\rangle^{t,+/f} & \prod_{i=1}^{\ell}\frac{\sinh(2\beta)}{\cosh(2\beta)+\cosh(2h_{v_i})} \nn\\
	&+ \sum_{i=1}^{\ell}\left(\frac{\sinh(2h_{v_i})}{\cosh(2\beta)+\cosh(2h_{v_i})}\prod_{k=i+1}^{\ell}
	\frac{\sinh(2\beta)}{\cosh(2\beta)+\cosh(2h_{v_k})}\right).
	\end{align}
Similarly,
	\begin{align}
	\langle\sigma_{v_0} \sigma_{v_\ell}\rangle^{t,+/f}
	&= \left\langle\sigma_{v_0}\left(\sigma_{v_0}\prod_{i=1}^{\ell}\frac{\sinh(2\beta)}{\cosh(2\beta)+\cosh(2h_{v_i})} \right.\right. \nn\\
	&\left.\left.\qquad + \sum_{i=1}^{\ell}\left(\frac{\sinh(2h_{v_i})}{\cosh(2\beta)+\cosh(2h_{v_i})}\prod_{k=i+1}^{\ell}
	\frac{\sinh(2\beta)}{\cosh(2\beta)+\cosh(2h_{v_k})}\right)\right)\right\rangle^{t,+/f}\nn\\
	&=\prod_{i=1}^{\ell}\frac{\sinh(2\beta)}{\cosh(2\beta)+\cosh(2h_{v_i})}\nn\\
	&\qquad +\langle\sigma_{v_0}\rangle^{t,+/f}\sum_{i=1}^{\ell}\left(\frac{\sinh(2h_{v_i})}
	{\cosh(2\beta)+\cosh(2h_{v_i})}\prod_{k=i+1}^{\ell}\frac{\sinh(2\beta)}{\cosh(2\beta)+\cosh(2h_{v_k})}\right).
	\end{align}
Combining the above yields
	\eq
	\label{eq-exactcorrintree}
	\langle\sigma_{v_0}\sigma_{v_\ell}\rangle^{t,+/f}-\langle\sigma_{v_0}\rangle^{t,+/f}	
	\langle\sigma_{v_\ell}\rangle^{t,+/f}
	=\left(1-\left(\langle\sigma_{v_0}\rangle^{t,+/f}\right)^2\right)\prod_{i=1}^{\ell}\frac{\sinh(2\beta)}{\cosh(2\beta)+\cosh(2h_{v_i})}.
	\en
By taking the limit $t\rightarrow\infty$, we obtain
	\eq
	\chi =\expec\left[\sum_{j \in T_{\infty}} \left(1-\langle\sigma_{v_0}\rangle^2\right)
	\prod_{i=1}^{|j|}\frac{\sinh(2\beta)}{\cosh(2\beta)+\cosh(2h_{v_i})}\right].
	\en
Finally, we can rewrite
	 \eq
	\frac{\sinh(2\beta)}{\cosh(2\beta)+\cosh(2h_{v_i})}
	= \frac{2\sinh(\beta)\cosh(\beta)}{2\cosh(\beta)^2-1+\cosh(2h_{v_i})}=
	\frac{\betahat}{1+\frac{\cosh(2h_{v_i})-1}{2\cosh(\beta)^2}},
	\en
so that
	\eq
	\label{chi-rewrite}
	\chi(\beta,B)=\expec\left[\left(1-\langle\sigma_{v_0}\rangle^2\right) \sum_{j \in T_{\infty}} \betahat^{|j|}
	\prod_{i=1}^{|j|}\Big(1+\frac{\cosh(2h_{v_i})-1}{2\cosh(\beta)^2}\Big)^{-1}\right].
	\en
The rewrite in \eqref{chi-rewrite} is valid for all $\beta$ and $B>0$, and provides the starting point for
all our results on the susceptibility.

\paragraph{Identification of the susceptibility \col{for $\beta<\beta_c$}.}
We take the limit $B\searrow0$, for $\beta<\beta_c$, and apply dominated convergence.
First of all, all fields $h_i$ converge to zero by the definition of $\beta_c$, so we have pointwise convergence.
Secondly,  $1+\frac{\cosh(2h_{v_i})-1}{2\cosh(\beta)^2}\geq 1$, so that the random variable in the
expectation is bounded from above by $\sum_{j \in T_{\infty}} \betahat^{|j|}$, which has finite
expectation as we show below.
Thus, by dominated convergence, the above converges to
	\eq
	\lim_{B\searrow0}\chi(\beta,B)
	=\expec\left[\sum_{j \in T_{\infty}} \betahat^{|j|}\right].
	\en
Denote by $Z_\ell$ the number of vertices at distance $\ell$ from the root. Then,
	\eq
	\expec\left[\sum_{j\in T_\infty} \betahat^{|j|}\right]
	= \expec\left[\sum_{\ell=0}^\infty Z_\ell \betahat^{\ell}\right]=\sum_{\ell=0}^\infty \expec[Z_\ell] \betahat^{\ell},
	\en
because $Z_\ell\geq0$, a.s. Note that $Z_\ell / (\expec[D] \nu^{\ell-1})$ is a martingale, because the offspring of the root has expectation $\expec[D]$ and all other vertices have expected offspring $\nu$. Hence,
	\eq
	\ch{\lim_{B\searrow0}\chi(\beta,B)=}\sum_{\ell=0}^\infty \expec[Z_\ell] \betahat^{\ell}
	= 1+\sum_{\ell=1}^\infty \ch{\expec[D]}\nu^{\ell-1}\betahat^\ell
	= 1+\frac{\expec[D]\betahat}{1-\betahat\nu}.
	\en
\ch{This proves \eqref{chi-comp-highT}. We continue to prove \eqref{chi-asy-highT},}
which follows by using~\eqref{eq-Taylorbetahatup} and~\eqref{eq-Taylorbetahatlow}:
	\eq
	\frac{\expec[D]\betahat}{1-\betahat^2}(\beta_c-\beta)^{-1} +1
	\leq \frac{\expec[D]}{\nu}\frac{1}{1-\betahat\nu}
	\leq \frac{\expec[D]\betahat}{1-\betahat_c^2}(\beta_c-\beta)^{-1}+1.
	\en
\end{proof}

\subsection{Partial results for the critical exponent $\boldsymbol{\gamma'}$}
\label{sec-gamma'}

For the supercritical susceptibility, we prove the following lower bound on $\boldsymbol{\gamma'}$:

\begin{proposition}[Critical exponent $\boldsymbol{\gamma'}$]
\ch{For $\tau\in (3,5]$ or $\expec[K^3]<\infty$,}
	\eq
	\boldsymbol{\gamma'}\geq 1.
	\en
\end{proposition}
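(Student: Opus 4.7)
The plan is to show $\chi(\beta,0^+)\geq c(\beta-\beta_c)^{-1}$ starting from the exact rewrite \eqref{chi-rewrite} of the susceptibility on the random tree. Since $\sigma_{v_0}^2=1$, the prefactor equals $\Var(\sigma_{v_0})=1-\langle\sigma_{v_0}\rangle^2$, and Lemma \ref{lem-hcto0} combined with Proposition \ref{prop-Magnetization} gives $M(\beta,0^+)\to 0$ as $\beta\searrow\beta_c$. Markov's inequality then yields $\prob[\langle\sigma_{v_0}\rangle>1/2]\leq 2M(\beta,0^+)\leq 1/4$ for $\beta$ sufficiently close to $\beta_c$; restricting the expectation to the complementary event, where the prefactor is at least $3/4$, loses only a universal constant factor.

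The main ingredient is a lower bound on $\sum_{j\in T_\infty}\betahat^{|j|}\prod_{i=1}^{|j|}(1+\delta(h_{v_i}))^{-1}$, where $\delta(h)=(\cosh(2h)-1)/(2\cosh^2\beta)$. I would use the elementary bounds $(1+x)^{-1}\geq 1-x$ and $\prod_i(1-x_i)\geq 1-\sum_i x_i$ for $x_i\geq 0$ to estimate, on a lineage $v_0,\dots,v_\ell$ in $T_\infty$,
\[
\prod_{i=1}^{\ell}(1+\delta(h_{v_i}))^{-1}\;\geq\;\tfrac{1}{2}\,\indic{\sum_{i=1}^\ell\delta(h_{v_i})\leq 1/2}.
\]
Since for $i\geq 1$ each $h_{v_i}$ has the marginal distribution of the fixed point of \eqref{eq-recursion}, Markov gives $\prob\bigl[\sum_{i=1}^\ell\delta(h_{v_i})>1/2\bigr]\leq 2\ell\,\mathcal{D}(\beta)$ for a suitable (possibly truncated) expectation $\mathcal{D}(\beta)$ of $\delta(h)$. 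Combined with $\expec[Z_\ell]=\expec[D]\nu^{\ell-1}$ via a many-to-one argument, and truncating at $L=\lfloor 1/(4\mathcal{D}(\beta))\rfloor$, this yields $\chi(\beta,0^+)\gtrsim \expec[D]\sum_{\ell=1}^L(\betahat\nu)^{\ell-1}$. Provided $\mathcal{D}(\beta)\lesssim\beta-\beta_c$, we have $L\log(\betahat\nu)=O(1)$ by \eqref{eq-Taylorbetahatup}, so $(\betahat\nu)^L$ is bounded below by a positive constant and the geometric sum is $\asymp 1/(\betahat\nu-1)\asymp(\beta-\beta_c)^{-1}$, as required.

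The principal obstacle is thus the bound $\mathcal{D}(\beta)\lesssim\beta-\beta_c$. For $\expec[K^3]<\infty$, the pointwise bound $\delta(h)\lesssim h^2$ near $h=0$, combined with $\expec[h^2]\lesssim m_0^2$ from the recursion \eqref{eq-recursion} and Lemma \ref{lem-boundxih2}, and $m_0^2\lesssim\betahat\nu-1$ from Proposition \ref{prop-UpperExi} with $\boldsymbol{\delta}=3$, gives $\expec[\delta(h)]\lesssim\beta-\beta_c$; an analogous computation with $\boldsymbol{\delta}=\tau-2$ works for $\tau\in(4,5]$. The delicate regime is $\tau\in(3,4)$, where $\expec[K^2]=\infty$ forces $\expec[\delta(h)]=\infty$; one then takes $\mathcal{D}(\beta)=\expec[\delta(h)\indic{\delta(h)\leq 1/2}]$, observes that $\delta(h)\leq 1/2$ restricts $h$ to a bounded set on which $\delta(h)\lesssim h$, and concludes $\mathcal{D}(\beta)\lesssim\expec[h]\lesssim m_0\sim(\beta-\beta_c)^{1/(\tau-3)}\ll\beta-\beta_c$, since $1/(\tau-3)>1$ for $\tau<4$.
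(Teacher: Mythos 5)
Your reduction to the tree via \eqref{chi-rewrite} and the restriction to lineages with $\sum_{i}\delta(h_{v_i})\le 1/2$ (with your $\delta(h)=(\cosh(2h)-1)/(2\cosh^2\beta)$) is in the right spirit, but the central estimate has a genuine gap: you treat the fields $h_{v_1},\dots,h_{v_\ell}$ along a lineage as having the law of the unbiased fixed point $h$ of \eqref{eq-recursion}, and then multiply the resulting Markov bound by $\expec[Z_\ell]=\expec[D]\nu^{\ell-1}$. These two steps cannot be combined. The indicator $\indic{\sum_i \delta(h_{v_i})\le 1/2}$ and the number of level-$\ell$ descendants are functions of the same tree and are correlated (large offspring along the path simultaneously creates many vertices at level $\ell$ and inflates the fields $h_{v_i}$), and when one computes $\expec\big[\sum_{|j|=\ell}F(h_{v_1},\dots,h_{v_\ell})\big]$ by a many-to-one formula, the offspring of the spine vertices necessarily carry the size-biased law $K^\star$, with $\prob(K^\star\ge k)\asymp k^{-(\tau-3)}$, so the relevant field at a spine vertex is $h_i^{\star}=B+\xi(h_{i+1}^{\star})+\sum_{j=1}^{K_i^\star-1}\xi(h_{i,j})$, stochastically much larger than $h$. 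Hence your $\mathcal D(\beta)$ must be an expectation of $\delta(h^\star)$, not of $\delta(h)$. When $\expec[K^3]<\infty$ the correct quantity happens to be of order $\beta-\beta_c$ (since $\expec[(K^\star)^2]=\expec[K^3]/\nu<\infty$), but your justification via $\expec[h^2]\lesssim \expec[\xi(h)]^2$ under the unbiased law does not establish it. For $\tau\in(3,4)$ the situation is worse: even $\expec[K^\star]=\expec[K^2]/\nu=\infty$, the untruncated $\expec[\delta(h^\star)]$ diverges, and after the necessary truncation the per-step cost is of order $\beta-\beta_c$, not your claimed $\mathcal D(\beta)\lesssim\expec[h]\lesssim \expec[\xi(h)]\sim(\beta-\beta_c)^{1/(\tau-3)}$, which is an estimate for the wrong (unbiased) object.

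This size-biasing is precisely what the paper's proof is organized around: it rewrites $\chi$ exactly as $\frac{\expec[D]}{\nu}\sum_{\ell}(\betahat\nu)^{\ell}\expec\big[(1-\langle\sigma_{v_0}\rangle^2)\exp\{-\sum_{i\le\ell}\log(1+\delta(h_i^{\star}))\}\big]$ with independent size-biased degrees along the spine, conditions on $D^\star,K_1^\star,\dots$ all being at most $b=(\beta-\beta_c)^{-1/(\tau-3)}$ (checking this event keeps probability bounded below along spines of length $m=(\beta-\beta_c)^{-1}$), applies Jensen, and then proves $\overline{\expec}_b[(h_i^{\star})^2]\le C(\beta-\beta_c)$ using the truncated moments of Lemma \ref{lem-truncmoment} together with Lemma \ref{lem-boundxih2} and the bounds on $\expec[\xi(h)]$ from Section \ref{sec-CritExpBetaDelta}. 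To repair your argument you would have to carry out essentially this truncated second-moment analysis under the spine (size-biased) law; the unbiased estimates you invoke do not suffice. A secondary, more easily fixable point: your treatment of the prefactor $1-\langle\sigma_{v_0}\rangle^2$ ignores its correlation with the path sum; the paper avoids this by bounding $\langle\sigma_{v_0}\rangle\le\tanh\big(B+\beta+\sum_{j=1}^{D^\star-1}\xi(h_{0,j})\big)$, which is independent of the remaining spine fields, so the expectation factorizes.
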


\begin{proof} We start by rewriting the susceptibility in a form that is convenient in the low-temperature phase.

\paragraph{A rewrite of the susceptibility in terms of i.i.d.\ random variables.}
For $\beta>\beta_c$ we start from~\eqref{chi-rewrite}.
We further rewrite
	\eq
	\chi(\beta,B)=
	\sum_{\ell=0}^\infty \betahat^{\ell} \expec\left[(1-\langle\sigma_{v_0}\rangle^2) \sum_{v_{\ell} \in T_{\infty}}
	\exp\Big\{-\sum_{i=1}^{\ell}\log\Big(1+\frac{\cosh(2h_{v_i})-1}{2\cosh(\beta)^2}\Big)\Big\}\right].
	\en
\ch{Here, and in the sequel, we use the convention that empty products, arising when $\ell=0$,
equal 1, while empty sums equal 0. Thus, the contribution due to $\ell=0$ in the above sum equals 1.}
We write $v_0=\phi$ and $v_i=a_0\cdots a_{i-1}\in {\mathbb N}^i$ for $i\geq 1$, so that $v_i$ the
$a_{i-1}$st child of $v_{i-1}$. Then,
	\eq
	\chi(\beta,B)=
	\sum_{\ell=0}^\infty \betahat^{\ell} \sum_{a_0, \ldots, a_{\ell-1}}
	\expec\left[(1-\langle\sigma_{v_0}\rangle^2)\indic{v_{\ell} \in T_{\infty}}
	\exp\Big\{-\sum_{i=1}^{\ell}\log\Big(1+\frac{\cosh(2h_{v_i})-1}{2\cosh(\beta)^2}\Big)\Big\}\right].
	\en
Let $K_{v_i}$ be the number of children of $v_i$, and condition on $K_{v_i}=k_i$ for every $i\in[0,\ell-1]$,
where we abuse notation to write $[0,m]=\{0,\ldots, m\}$.
As a result, we obtain that
	\begin{align}
	\chi(\beta,&B)=
	\sum_{\ell=0}^\infty \betahat^{\ell} \sum_{a_0, \ldots, a_{\ell-1}}\sum_{k_0,\ldots, k_{\ell-1}}
	\prob(v_{\ell}\in T_{\infty}, K_{v_i}=k_i ~\forall i\in[0, \ell-1])\\
	& \times
	\expec\left[(1-\langle\sigma_{v_0}\rangle^2)
	\exp\Big\{-\sum_{i=1}^{\ell}\log\Big(1+\frac{\cosh(2h_{v_i})-1}{2\cosh(\beta)^2}\Big)\Big\}
	\mid v_{\ell}\in T_{\infty}, K_{v_i}=k_i ~\forall i\in[0, \ell-1]\right].\nn
	\end{align}
Note that
	\eq
	\prob(K_{v_i}=k_i ~\forall i\in[0, \ell-1], v_{\ell}\in T_{\infty})
	=\prob(D=k_0)\indic{a_0\leq k_0}\prod_{i=1}^{\ell-1} \prob(K=k_i)\indic{a_i\leq k_i}.
	\en
Let $T_{i,j}$ be the tree that descibes all descendants of the $j$th child of $v_i$, with the $a_i$th child removed,
and $T_{\ell}$ the offspring of $v_{\ell}$.
When $v_{\ell}\in T_{\infty}$, all information of the tree $T_{\infty}$ can be encoded in the
collection of trees $(T_{i,j})_{j\in [0,K_{v_i}-1],i\in [0,\ell-1]}$ and $T_{\ell}$,
together with the sequence $(a_i)_{i=0}^{\ell-1}$. Denote $\vec{T}=\big((T_{i,j})_{j\in [0,K_{v_i}-1],i\in [0,\ell-1]}, T_{\ell}\big)$.
Then, for any collection of trees $\vec{t}=\big((t_{i,j})_{j\in [0,k_i-1],i\in [0,\ell-1]}, t_{\ell}\big)$,
	\eq
	\prob(\vec{T}=\vec{t}\mid K_{v_i}=k_i ~\forall i\in[0, \ell-1], v_{\ell}\in T_{\infty})
	=\prob(T=t_{\ell}) \prod_{(i,j)\in [0,k_i-1]\times [0,\ell-1]} \prob(T=t_{i,j}),
	\en
where the law of $T$ is that of a Galton-Watson tree with offspring distribution $K$.
We conclude that
	\begin{align}
	\chi(\beta,B)&=
	\sum_{\ell=0}^\infty \betahat^{\ell} \sum_{a_0, \ldots, a_{\ell-1}}\sum_{k_0,\ldots, k_{\ell-1}}
	\prob(D=k_0)\indic{a_0\leq k_0}\prod_{i=1}^{\ell-1} \prob(K=k_i)\indic{a_i\leq k_i}\\
	&\qquad \times
	\expec\left[(1-\langle\sigma_{v_0}\rangle^2)
	\exp\Big\{-\sum_{i=1}^{\ell}\log\Big(1+\frac{\cosh(2h_{i}^{\star}(\vec{k}))-1}{2\cosh(\beta)^2}\Big)\Big\}
	\right],\nn
	\end{align}
where $(h_i^{\star}(\vec{k}))_{i=0}^{\ell}$ satisfy the recursion relations $h_{\ell}^{\star}=h_{\ell,1}$
	\eq
	h_i^{\star}(\vec{k})=B+\xi(h_{i+1}^{\star}(\vec{k}))+\sum_{j=1}^{k_i-1} \xi(h_{i,j}),
	\en
and where $(h_{i,j})_{i\in[0,\ell], j\geq 1}$ are i.i.d.\ copies of
the random variable $h(\beta,B)$. We note that the law of $(h_i^{\star}(\vec{k}))_{i=0}^{\ell}$ does not depend
on $(a_i)_{i\in [0,\ell-1]}$, so that the summation over $(a_i)_{i\in [0,\ell-1]}$ yields
	\begin{align}
	\chi(\beta,B)&=
	\sum_{\ell=0}^\infty \betahat^{\ell}\sum_{k_0,\ldots, k_{\ell-1}}
	k_0\prob(D=k_0)\prod_{i=1}^{\ell-1} k_i\prob(K=k_i)\\
	&\qquad \times
	\expec\left[(1-\langle\sigma_{v_0}\rangle^2)
	\exp\Big\{-\sum_{i=1}^{\ell}\log\Big(1+\frac{\cosh(2h_{i}^{\star}(\vec{k}))-1}{2\cosh(\beta)^2}\Big)\Big\}
	\right].\nn
	\end{align}
For a random variable $X$ on the non-negative integers with $\expec[X]>0$,
we let $X^{\star}$ be the size-biased distribution of $X$ given by
	\eq
	\prob(X^{\star}=k)=\frac{k}{\expec[X]}\prob(X=k).
	\en
Then
	\begin{align}
	\chi(\beta,B)&=
	\frac{\expec[D]}{\nu}
	\sum_{\ell=0}^\infty (\betahat\nu)^{\ell}\sum_{k_0,\ldots, k_{\ell-1}}
	\prob(D^{\star}=k_0)\prod_{i=1}^{\ell-1} \prob(K^{\star}=k_i)\\
	&\qquad \times
	\expec\left[(1-\langle\sigma_{v_0}\rangle^2)
	\exp\Big\{-\sum_{i=1}^{\ell}\log\Big(1+\frac{\cosh(2h_{i}^{\star}(\vec{k}))-1}{2\cosh(\beta)^2}\Big)\Big\}
	\right].\nn
	\end{align}
Define $(h_i^{\star})_{i=0}^{\ell}=\big(h_i^{\star}(D^{\star}, K^{\star}_1, \ldots, K^{\star}_{\ell-1}, K_{\ell})\big)_{i=0}^{\ell}$,
where the random variables $(D^{\star}, K^{\star}_1, \ldots, K^{\star}_{\ell-1}, K_{\ell})$ are independent.
Then we finally arrive at
	\begin{align}
	\chi(\beta,B)&=
	\frac{\expec[D]}{\nu}
	\sum_{\ell=0}^\infty (\betahat\nu)^{\ell}
	\expec\left[(1-\langle\sigma_{v_0}\rangle^2)
	\exp\Big\{-\sum_{i=1}^{\ell}\log\Big(1+\frac{\cosh(2h_{i}^{\star})-1}{2\cosh(\beta)^2}\Big)\Big\}
	\right].
	\end{align}

\paragraph{Reduction to second moments.}
We now proceed towards the lower bound on $\boldsymbol{\gamma'}$.
\ch{Note that, a.s.,
	\eq
	\langle\sigma_{v_0}\rangle=\tanh(h_{v_0}^{\star}),
	\en
where
	\eq
	h_{v_0^{\star}}=B+\xi(h_{v_1}^{\star})+\sum_{j=1}^{D^{\star}-1} \xi(h_{0,j})\leq B+\beta+\sum_{j=1}^{D^{\star}-1} \xi(h_{0,j}).
	\en
Therefore, 	
	\eq
	\langle\sigma_{v_0}\rangle\leq \tanh(B+\beta+\sum_{j=1}^{D^{\star}-1} \xi(h_{0,j})).
	\en
The right hand side is independent of $(h_i^{\star})_{i=1}^{\ell}$, so that the expectation factorizes.
Further,
	\eq
	\expec\Big[\tanh(B+\beta+\sum_{j=1}^{D^{\star}-1} \xi(h_{0,j}))\Big]\to \tanh(\beta)=\betahat<1,
	\en
as $B\searrow 0,\beta\searrow\beta_c$.
}
Further, we restrict the sum over all $\ell$ to $\ell\leq m$, where we take $m=(\beta-\beta_c)^{-1}.$
This leads to
	\begin{align}
	\chi(\beta,B)&\geq
	\frac{(1-\betahat^2)\expec[D]}{\nu}
	\sum_{\ell=0}^m (\betahat\nu)^{\ell}
	\expec\left[
	\exp\Big\{-\sum_{i=1}^{\ell}\log\Big(1+\frac{\cosh(2h_{i}^{\star})-1}{2\cosh(\beta)^2}\Big)\Big\}
	\right].
	\end{align}	
We \ch{condition on} all coordinates of $(D^{\star}, K^{\star}_1, \ldots, K^{\star}_{\ell-1}, K_{\ell})$
being at most $b=(\beta-\beta_c)^{\ch{-}1/(\tau-3)}$, which has probability
	\begin{align}
	\prob(D^{\star}\leq b, K^{\star}_1\leq b, \ldots, K^{\star}_{\ell-1}\leq b, K_{\ell}\leq b)
	&\geq (1-o(1)) \prob(K^{\star}\leq b)^{m}\\
	&\geq (1-o(1)) \big(1-C_{K^{\star}}b^{-(\tau-3)}\big)^m,\nn
	\end{align}
which is uniformly bounded from below by a constant for the choices $m=(\beta-\beta_c)^{-1}$
and $b=(\beta-\beta_c)^{\ch{-}1/(\tau-3)}$. Also, we use that $\betahat\nu\geq 1$, since $\beta>\beta_c$.
This leads us to
	\eq
	\chi(\beta,B)
	\geq
	c_{\chi}
	\sum_{\ell=0}^m
	\overline{\expec}_b\left[
	\exp\Big\{-\sum_{i=1}^{\ell}\log\Big(1+\frac{\cosh(2h_{i}^{\star})-1}{2\cosh(\beta)^2}\Big)\Big\}
	\right],
	\en
where $\overline{\expec}_b$ denotes the conditional expectation
given that $D^{\star}\leq b, K^{\star}_1\leq b, \ldots, K^{\star}_{\ell-1}\leq b, K_{\ell}\leq b$.
Using that $\expec[\e^{X}]\geq \e^{\expec[X]}$, this leads us to
	\eq
	\chi(\beta,B)
	\geq
	c_{\chi}
	\sum_{\ell=0}^m
	\exp\Big\{-\sum_{i=1}^{\ell}\overline{\expec}_b\left[\log\Big(1+\frac{\cosh(2h_{i}^{\star})-1}{2\cosh(\beta)^2}\Big)
	\right]\Big\}.
	\en
Define, for $a>0$ and $x\geq 0$, the function $q(x)=\log\Big(1+a(\cosh(x)-1)\Big)$. Differentiating leads to
	\eq
	q'(x)=\frac{a\sinh(x)}{1+a(\cosh(x)-1)},
	\en
so that $q'(x)\leq \ch{C_q x/2}$ for some constant $C_q$ and all $x\geq 0$. As a result, $q(x)\leq C_qx^2/4$, so that
	\eq
	\chi(\beta,B)
	\geq
	c_{\chi}
	\sum_{\ell=0}^m
	\exp\Big\{-C_q\sum_{i=1}^{\ell}\overline{\expec}_b\left[(h_{i}^{\star})^2\right]\Big\}.
	\en
\paragraph{Second moment analysis of $h_{i}^{\star}$.}
As a result, it suffices to investigate second moments of $h_{i}^{\star}$, which we proceed with now.
We note that
	\eq
	h_{i}^{\star}=\xi(h_{i+1}^{\star})+B+\sum_{j=1}^{K_i^{\star}-1} \xi(h_{i,j}).
	\en
Taking expectations and using that $\xi(h)\leq \betahat h$ leads to
	\eq
	\overline{\expec}_b\left[h_{i}^{\star}\right]\leq \ch{\betahat}\overline{\expec}_b\left[h_{i+1}^{\star}\right]
	+B+\expec[K^{\star}-1\mid K^{\star}\leq b]\expec[\xi(h)].
	\en
Iterating this inequality until $\ch{\ell-i}$ and using that $\overline{\expec}_b\left[h_{\ell}^{\star}\right]\leq B+\nu \expec[\xi(h)]$
\ch{(since $\overline{\expec}_b[K]\leq \expec[K]$)} leads to
	\begin{align}
	\label{asy-Ehi}
	\overline{\expec}_b\left[h_{i}^{\star}\right]
	&\leq \betahat^{\ell-i}\ch{(B+\nu\expec[\xi(h)])}+\sum_{s=0}^{\ell-i-1} \betahat^s
	\big(B+\expec[K^{\star}-1\mid K^{\star}\leq b]\expec[\xi(h)]\big)\\
	&\leq \ch{\betahat^{\ell-i}(B+\nu\expec[\xi(h)])+}\frac{B+\expec[K^{\star}-1\mid K^{\star}\leq b]\expec[\xi(h)]}{1-\betahat}.\nn
	\end{align}
Similarly,
	\begin{align}
	\overline{\expec}_b\left[(h_{i}^{\star})^2\right]
	&\leq \betahat^2\overline{\expec}_b\left[(h_{i+1}^{\star})^2\right]
	+2\betahat\overline{\expec}_b\left[h_{i+1}^{\star}\right]\big(B+\expec[K^{\star}-1\mid K^{\star}\leq b]\expec[\xi(h)]\big)\\
	&\qquad+B^2 +2B\expec[K^{\star}-1\mid K^{\star}\leq b]\expec[\xi(h)]
	+\expec[(K^{\star}-1)(K^{\star}-2)\mid K^{\star}\leq b]\expec[\xi(h)]^2\nn\\
	&\qquad+\expec[K^{\star}-1\mid K^{\star}\leq b]\expec[\xi(h)^2].\nn
	\end{align}
Taking the limit $B\searrow 0$ we thus obtain
	\begin{align}
	\overline{\expec}_b\left[(h_{i}^{\star})^2\right]
	&\leq \betahat^2\overline{\expec}_b\left[(h_{i+1}^{\star})^2\right]
	+2\betahat\overline{\expec}_b\left[h_{i+1}^{\star}\right]\expec[K^{\star}-1\mid K^{\star}\leq b]\expec[\xi(h)]\\
	&\qquad
	+\expec[(K^{\star}-1)(K^{\star}-2)\mid K^{\star}\leq b]\expec[\xi(h)]^2+
	\expec[K^{\star}-1\mid K^{\star}\leq b]\expec[\xi(h)^2].\nn
	\end{align}
\ch{We start analysing the case where $\expec[K^3]<\infty$. By Theorem~\ref{thm-CritExp}, for $\expec[K^3]<\infty$,
	\eq
	\expec[\xi(h)] \leq C_0 (\beta-\beta_c)^{1/2},
	\en
for some constant $C_0$. Substituting \eqref{asy-Ehi}, and iterating in a similar fashion as in the proof of \eqref{asy-Ehi},
we obtain that, for $\expec[K^3]<\infty$,
	\eq
	\overline{\expec}_b\left[(h_{i}^{\star})^2\right]\leq C(\beta-\beta_c).
	\en
We next extend this analysis to $\tau\in(3,5).$ Note that, for every $a>0$,
	\eq
	\expec[(K^{\star})^a\mid K^{\star}\leq b]
	=\frac{\expec[K^{a+1}\indic{K\leq b}]}{\expec[K\indic{K\leq b}]},
	\en
so that, for $\tau\in(3,5)$,
	\eq
	\expec[(K^{\star})^2\mid K^{\star}\leq b]\leq \frac{C_{3,\tau}}{\expec[K\indic{K\leq b}]} b^{5-\tau},
	\en
Further, for $\tau\in(3,5)$,
	\eq
	\expec[\xi(h)] \leq C_0 (\beta-\beta_c)^{1/(3-\tau)},
	\en
and thus
	\eq
	\expec[(K^{\star})^2\mid K^{\star}\leq b] \expec[\xi(h)]^2C\leq b^{5-\tau}\expec[\xi(h)]^2
	\leq C (\beta-\beta_c)^{-(5-\tau)/(3-\tau)+2/(3-\tau)}=C(\beta-\beta_c).
	\en}
It can readily be seen that all other contributions to $\overline{\expec}_b\left[(h_{i}^{\star})^2\right]$
are of the same or smaller order. For example, when $\expec[K^2]<\infty$ and using that $1/(\tau-3)\geq 1/2$
for all $\tau\in (3,5)$,
	\eq
	\expec[K^{\star}-1\mid K^{\star}\leq b]\expec[\xi(h)^2]\leq C\expec[\xi(h)]^2
	=O(\beta-\beta_c),
	\en
while, when $\tau\in (3,4)$,
	\eq
	\expec[K^{\star}-1\mid K^{\star}\leq b]\expec[\xi(h)^2]\leq Cb^{4-\tau} \expec[\xi(h)]^{\tau-2}
	\ch{=}C (\beta-\beta_c)^{-(4-\tau)/(3-\tau)+(\tau-2)/(3-\tau)}=C(\beta-\beta_c)^2.
	\en
We conclude that
	\eq
	\overline{\expec}_b\left[(h_{i}^{\star})^2\right]\leq C(\beta-\beta_c).
	\en
\ch{Therefore,
	\eq
	\chi(\beta,B)
	\geq
	c_{\chi}
	\sum_{\ell=0}^m
	\exp\Big\{-C\ell (\beta-\beta_c) \Big\}=O((\beta-\beta_c)^{-1}\chs{)},
	\en
as required.}
	
The proof for $\tau=5$ is similar when noting that the logarithmic corrections
\ch{present in $\expec[\xi(h)]^2$ and in $\expec[(K^{\star})^2\mid K^{\star}\leq b]$} precisely cancel.
\end{proof}

We close this section by performing a heuristic argument to determine the upper bound on
$\boldsymbol{\gamma'}$. Unfortunately, as we will discuss in more detail following the heuristics,
we are currently not able to turn this analysis into a rigorous proof.

\paragraph{The upper bound on $\boldsymbol{\gamma'}$: heuristics for $\expec[K^3]<\infty$.}
We can bound from above
	\begin{align}
	\chi(\beta,B)&\leq
	\frac{\expec[D]}{\nu}
	\sum_{\ell=0}^\infty (\betahat\nu)^{\ell}
	\expec\left[\exp\Big\{-\sum_{i=1}^{\ell}\log\Big(1+\frac{\cosh(2h_{i}^{\star})-1}{2\cosh(\beta)^2}\Big)\Big\}
	\right].
	\end{align}
Now, the problem is that $\betahat\nu>1$ when $\beta>\beta_c$, so that we need to extract
extra decay from the exponential term, \ch{which is technically demanding, and requires us to know
various constants rather precisely. Let us show this heuristically.} It suffices to study large
values of $\ell$, since small values  can be bounded in a simple way.

We blindly put the expectation in the exponential, and Taylor expand to obtain that
	\begin{align}
	\label{expon-bd}
	\chi(\beta,B)&\approx
	\frac{\expec[D]}{\nu}
	\sum_{\ell=0}^\infty (\betahat\nu)^{\ell}
	\exp\Big\{-\sum_{i=1}^{\ell}\frac{\expec\left[(h_{i}^{\star})^2\right]}{\cosh(\beta)^2}\Big\}.
	\end{align}
We compute that
	\eq
	\cosh(\beta)^2=\frac{1}{1-\betahat^2}.
	\en
Since
	\eq
	h_{i}^{\star}\approx \betahat h_{i+1}^{\star}+\sum_{j=1}^{K_i^{\star}-1} \xi(h_{i,j}),
	\en
we have
	\eq
	\expec\left[h_{i}^{\star}\right]\approx \frac{\expec[K^{\star}-1]}{1-\betahat} \expec[\xi(h)],
	\en
and
	\eq
	\expec\left[(h_{i}^{\star})^2\right]\approx \frac{2\betahat\expec[K^{\star}-1]^2
	+\expec[(K^{\star}-1)(K^{\star}-2)](1-\betahat)}{(1-\betahat^2)(1-\betahat)} \expec[\xi(h)]^2
	+\frac{\expec[K^{\star}-1]}{1-\betahat^2}\expec[\xi(h)^2].
	\en
Ignoring all error terms in the proof of Lemma \ref{lem-boundxih2}
shows that
	\eq
	\expec[\xi(h)^2]\approx \frac{\nu_2\betahat^2}{1-\betahat} \expec[\xi(h)]^2=C_2\expec[\xi(h)]^2,
	\en
so in total we arrive at (\ch{also using that $\betahat\approx 1/\nu$})
	\eq
	\label{h-star-squared}
	\expec\left[(h_{i}^{\star})^2\right]\approx
	\frac{\nu_3(1-\betahat)/\nu+3\nu_2^2/\nu^3}{(1-\betahat^2)(1-\betahat)} \expec[\xi(h)]^2.
	\en
As a result,
	\eq
	\label{h-star-squared-rep}
	\frac{\expec\left[(h_{i}^{\star})^2\right]}{\cosh(\beta)^2}\approx
	\frac{\nu_3(1-\betahat)/\nu+3\nu_2^2/\nu^3}{1-\betahat}\expec[\xi(h)]^2.
	\en
Ignoring error terms in the computation in Lemma \ref{lem-boundxih3} shows that
	\eq
	\expec[\xi(h)^3] \approx C_3 \expec[\xi(h)]^3,
	\en
where
	\eq
	C_3 = \frac{\betahat^3}{1-\betahat^3 \nu} \left(\nu_3 + 3 \nu_2 C_2\right)
	\approx \frac{\betahat^3}{1-\betahat^2} \left(\nu_3 + 3 \nu_2 C_2\right)
	=\frac{\betahat^3}{(1-\betahat^2)(1-\betahat)} \left(\nu_3(1-\betahat) + 3 \ch{(\nu_2/\nu)^2}\right),
	\en
since $\ch{\betahat\approx 1/\nu}$. Further, again ignoring error terms in
\eqref{eq-UpperExi} \ch{and Taylor expanding to third order} shows that
	\eq\label{eq-UpperExi-rep}
	\expec[\xi(h)] \approx \betahat \nu \expec[\xi(h)] - C_1 \expec[\xi(h)]^{3},
	\en
where
	\eq
	C_1=-\frac{\xi'''(0)}{6} \big(\nu C_3+3\nu_2C_2+\nu_3\big),
	\en
and $\xi'''(0)=-2\betahat(1-\betahat^2)$. Substituting the definitions for $C_2$ and $C_3$ yields
	\begin{align}	
	C_1&=\frac{\betahat(1-\betahat^2)}{3}\big(\nu C_3+3\nu_2C_2+\nu_3\big)\\
	&=\frac{\betahat}{3(1-\betahat)} \big(\nu\betahat^3\nu_3(1-\betahat)+ 3\nu\betahat^3\ch{(\nu_2/\nu)}^2+3\nu_2^2\betahat^2(1-\betahat^2)+\nu_3(1-\betahat)(1-\betahat^2)\big)\nn\\
	&=\frac{\betahat}{3(1-\betahat)} \big(\nu_3(1-\betahat)+ \ch{3\nu_2^2\betahat^2}\big).\nn
	\end{align}

Thus, we arrive at
	\eq
	\expec[\xi(h)]^2\approx \frac{\betahat \nu-1}{C_1},
	\en
so that substitution into \eqref{h-star-squared-rep} leads to
	\begin{align}
	\label{the-miracle}
	\frac{\expec\left[(h_{i}^{\star})^2\right]}{\cosh(\beta)^2}
	&\approx (\betahat \nu-1)
	\frac{\ch{3\big(\nu_3(1-\betahat)/\nu+3\nu_2^2/\nu^3\big)}}
	{\betahat\big(\nu_3(1-\betahat)+ \ch{3\nu_2^2\betahat^2}\big)}\ch{=3(\betahat \nu-1)}.
	\end{align}
\ch{We conclude that
	\eq
	\label{cancel-pos-neg}
	(\betahat \nu) \exp{\big\{-\frac{\expec\left[(h_{i}^{\star})^2\right]}{\cosh(\beta)^2}\big\}}
	\leq \big(1+(\betahat \nu-1)\big)\e^{-3(\betahat \nu-1)}\leq \e^{-2(\betahat \nu-1)}.
	\en
This suggests that
	\eq
	\chi(\beta,B)\leq
	\frac{\expec[D]}{\nu}
	\sum_{\ell=0}^\infty \e^{-2\ell (\betahat \nu-1)}=O((\betahat \nu-1)^{-1}),
	\en
as required. Also, using \eqref{expon-bd}, this suggests that
	\eq
	\lim_{\beta\searrow \beta_c} (\betahat \nu-1)\chi(\beta,0^+)=\expec[D]/(2\nu),
	\en
where the constant is precisely half the one for the subcritical susceptibility (see \eqref{chi-comp-highT}).
It can be seen by an explicit computation that the same factor $1/2$ is also present in the same
form for the Curie-Weiss model.}
\col{
Indeed  for the Boltzmann\chs{-}Gibbs measure with Hamiltonian
$H_n(\sigma) = -\frac{1}{2\chs{n}}\sum_{i,j}\sigma_i\sigma_j $ one has $\beta_c=1$ and
a susceptibility $\chi(\beta,0^+)=1/(1-\beta)$ for $\beta<\beta_c$,
$\chi(\beta,0^+) = (1-m^2)/(1-\beta(1-m^2))$ with $m$ the non-zero solution
of $m=\tanh(\beta m)$ for $\beta>\beta_c$.} \chs{Expanding this gives $m^2=3(\beta-1)(1+o(1))$ for $\beta\searrow1$ and hence $\chi(\beta,0^+)=(1+o(1))/(1-\beta(1-3(\beta-1)))=(1+o(1))/(2(\beta-1))$.}

\ch{It is a non-trivial task to turn the heuristic of this Section into a proof because of several reasons:
(a) We need to be able to justify the step where we put expectations in the exponential.
While we are dealing with random variables with small means, they are not independent, so this is
demanding; (b) We need to know the constants very precisely, as we are using the fact that a
positive and negative term cancel in \eqref{cancel-pos-neg}. The analysis performed in the
previous sections does not give optimal control over these constants, so this step also requires substantial
work.}

\ch{The above heuristic does not apply to $\tau\in (3,5]$. However,  the constant in  \eqref{the-miracle}
is \emph{always} equal to 3, irrespective of the degree distribution. This suggests that also for $\tau\in(3,5]$, we should have
$\boldsymbol{\gamma'}\leq1$.}


\paragraph*{Acknowledgements.}
The work of SD and RvdH is supported in part by The Netherlands Organisation for
Scientific Research (NWO).
CG acknowledge\chs{s} financial support by  the Italian Research Funding Agency (MIUR)
through the FIRB project grant n.\ RBFR10N90W.


\end{document}